\theoremstyle{plain}
\newtheorem{theorem}{Theorem}[section]
\newtheorem{lemma}[theorem]{Lemma}
\newtheorem{proposition}[theorem]{Proposition}
\newtheorem{remark}[theorem]{Remark}
\theoremstyle{remark}
\newcommand{\tv}{{\rm TV}}
\newcommand{\sumtwo}[2]{\sum_{\substack{#1 \\ #2}}} 
\newcommand{\ind}{\mathbf{1}}
\newcommand{\E}{\mathds{E}}
\newcommand{\R}{\mathbb{R}}
\newcommand{\cA}{\ensuremath{\mathcal A}} 
\newcommand{\cB}{\ensuremath{\mathcal B}}
\newcommand{\cE}{\ensuremath{\mathcal E}} 
\newcommand{\cF}{\ensuremath{\mathcal F}}
\newcommand{\cN}{\ensuremath{\mathcal N}} 
\newcommand{\cP}{\ensuremath{\mathcal P}}
\newcommand{\cT}{\ensuremath{\mathcal T}} 
\newcommand{\cU}{\ensuremath{\mathcal U}}
\newcommand{\bE}{\mathbf{E}}
\newcommand{\bP}{\mathbf{P}}
\newcommand{\bbE}{{\ensuremath{\mathbb E}} }
\newcommand{\bbN}{{\ensuremath{\mathbb N}} } 
\newcommand{\bbP}{{\ensuremath{\mathbb P}} } 
\newcommand{\bbR}{{\ensuremath{\mathbb R}} } 
\newcommand{\bbT}{{\ensuremath{\mathbb T}} }
\newcommand{\bbX}{{\ensuremath{\mathbb X}} }
\newcommand{\gep}{\varepsilon}       
\newcommand{\gO}{\Omega}
\newcommand{\gl}{\lambda}
\newcommand{\si}{\sigma} 
\newcommand{\wt}{\widetilde }
    \let\d=\delta  
 \let\g=\gamma       \let\l=\lambda
   \let\t=\tau
\let\O=\Omega
\def\({\left(}
\def\){\right)}
 \newcommand{\dd}{\,\text{\rm d}}             
\begin{document}

\begin{frontmatter}

\title{Cutoff phenomenon in nonlinear recombinations}
\runtitle{Cutoff phenomenon in nonlinear recombinations}
\begin{aug}

\author[A]{\fnms{Pietro }~\snm{Caputo}\ead[label=e1]{pietro.caputo@uniroma3.it}},
\author[B]{\fnms{Cyril}~\snm{Labb\'e}\ead[label=e2]{clabbe@lpsm.paris}
}
\and
\author[C]{\fnms{Hubert}~\snm{Lacoin}\ead[label=e3]{lacoin@impa.br}}
\address[A]{Department of Mathematics and Physics, Roma Tre University, Largo San Murialdo 1, 00146 Roma, Italy\printead[presep={,\ }]{e1}}

\address[B]{Universit\'e Paris Cit\'e, Laboratoire de Probabilit\'es, Statistique et Mod\'elisation, UMR 8001, F-75205 Paris, France\printead[presep={,\ }]{e2}}

\address[C]{IMPA, Estrada Dona Castorina 110, Rio de Janeiro, Brasil\printead[presep={,\ }]{e3}}
\end{aug}

\begin{abstract}
We investigate a quadratic dynamical system known as nonlinear recombinations. This system models the evolution of a probability measure over the Boolean cube, converging to the stationary state obtained as the product of the initial marginals. Our main result reveals a cutoff phenomenon for the total variation distance in both discrete and continuous time. Additionally, we derive the explicit cutoff profiles in the case of monochromatic initial distributions. These profiles are different in the discrete and continuous time settings. The proof leverages a pathwise representation of the solution in terms of   
a fragmentation process associated to a binary tree. In continuous time, the underlying binary tree is given by a branching random process, thus requiring a more elaborate probabilistic analysis.
\end{abstract}

\begin{keyword}[class=MSC]
\kwd[Primary ]{82C20}
\kwd{60K35}
\kwd[; secondary ]{60J80}
\end{keyword}

\begin{keyword}
\kwd{Nonlinear recombinations}
\kwd{Mixing time}
\kwd{Branching process}
\end{keyword}

\end{frontmatter}

\section{Introduction}
\medskip

Consider  $Q$, the transition matrix of an aperiodic, irreducible  Markov chain on a finite state space $\gO$, and let $\pi$ denote the corresponding stationary measure. A classic  result states that the distribution of the Markov chain, starting from an arbitrary initial distribution $\mu$, converges to $\pi$. 
Equivalently, if we define a map $T$ on $\mathcal P(\gO)$ -- the set of probability measures on $\gO$ -- by setting $T(\mu)= \mu Q$, 
then the sequence of iterations of $T$ forms a linear dynamical system converging towards its unique fixed point $\pi$.

\medskip

In our setup a \textit{nonlinear evolution} is the sequence of iterations of a \textit{nonlinear} transformation $T: \mathcal P(\gO)\to \mathcal P(\gO)$. Natural examples of nonlinear evolution include the so-called \textit{nonlinear Markov chains} which are time-inhomogenous Markov chain whose transition rule at time $t$ depends on the distribution of the system at time $t$ \cite{AJDM11}.

\medskip

The study of the cutoff phenomenon in high dimensional  Markov chains has emerged as an intriguing 
area of research in recent decades. This phenomenon captures the abrupt convergence to stationarity, characterized by the total variation distance from equilibrium behaving as an approximate step function
 as the dimension approaches infinity \cite{diaconis1996cutoff,levin2017markov}. 
While a conclusive theoretical framework is yet to be established, the pursuit of a theory regarding cutoff phenomena has paved the way for substantial progress in deepening our comprehension of finite Markov chains' convergence to equilibrium; see, e.g., 
\cite{lacoin2016mixing,lubetzky2017universality,salez2023cutoff}.

\medskip

The analogous phenomenon in the context of nonlinear evolutions such as nonlinear Markov chains has received comparatively little attention. 
This can be attributed to the inherent challenges associated with analyzing fixed point convergence  in nonlinear dynamical systems. To illustrate these difficulties, it is worth noting that even fundamental properties such as the monotonicity of the total variation distance over time, a common feature  in linear systems, are not readily available in nonlinear evolutions. As a consequence, 
it seems that developing a nonlinear counterpart to the analysis of  mixing times will require some significant innovations.

In this paper, we explore the matter 
in the simple context of 
the quadratic system known as nonlinear recombinations. 
The model has its roots in the classical Hardy-Weinberg model of genetic recombination \cite{Gei,Hardy,Weinberg}, and has been more recently revisited within the general context of quadratic reversible dynamical systems \cite{CapSin,Sinetal2,Sinetal}, which provide a combinatorial counterpart to well studied evolutions such as the Boltzmann equation from kinetic theory. The
state of the system at time $t$ is a probability vector over the Boolean cube, 
interpreted as the distribution of genotypes at time $t$ in a given population, and the evolution is dictated by a simple quadratic recombination
mechanism to be described shortly below. The
nonlinear dynamics converges to the product distribution in which all bits are independent, with marginal probabilities
at  each position given by those in the initial distribution at time zero. Quantitative statements about the convergence to stationarity in total variation distance were previously obtained in \cite{Sinetal2} for the discrete time model and in \cite{CapPar} for the continuous time case. In this paper we provide a more detailed analysis of the convergence, and demonstrate the presence of a cutoff phenomenon in both discrete and continuous time models.

 \subsection{Discrete time dynamics}
Fix $n\in\bbN$, let $\O=\{-1,1\}^n$ denote the Boolean cube, 
 and let $\cP(\O)$ denote the set of probability measures on $\O$. 
We write $[n]=\{1,\dots,n\}$ for the set of coordinates and $A\subset [n]$ for a generic subset thereof. Given $A\subset [n]$ and $\mu\in\cP(\O)$,  $\mu_A$ denotes the marginal of $\mu$ on $A$, 
that is the push forward of $\mu$
for the natural projection $\si\mapsto\si_A$ of $\O$ onto $\{-1,1\}^A$. 
  Given $\mu,\nu\in\cP(\O)$ and $A\subset [n]$, the recombination of $\mu,\nu$ at $A$ is defined as the measure $ \mu_A\otimes \nu_{A^c}\in\cP(\O)$ 
 obtained by taking the product of the marginals $ \mu_A,\nu_{A^c}$, where $A^c=[n]\setminus A$ denotes the complement of $A$.
 We then consider the averaged uniform recombination of $\mu, \nu$, defined as
\begin{equation}\label{onestep}
\mu\circ \nu= 2^{-n}\sum_{A\subset[n]}
\mu_A\otimes \nu_{A^c},
\end{equation}
which yields a commutative product in $\cP(\O)$.
In analogy with kinetic theory, we sometimes refer to it as the collision product. Note that $\mu\circ\nu$ may be interpreted as follows. Let $\si,\si'\in\O$
denote two independent random arrays with distribution $\mu,\nu$ respectively, let $A\subset[n]$ be chosen uniformly at random and call $\eta,\eta'\in\O$ the new pair of arrays obtained by swapping the content of the original ones on the set $A$, that is $\eta:=\si_A\si'_{A^c}$ and $\eta':=\si'_A\si_{A^c}$. Then $ \mu\circ\nu$ is the distribution of  $\eta$.

The discrete time dynamics is defined as follows. For every  initial state $\mu\in\cP(\O)$, the state at time $t\in\bbN$ is given by the recursive relations
\begin{equation}\label{recstep}
\mu_t = \mu_{t-1}\circ \mu_{t-1}\,,\qquad 
\mu_0=\mu.
\end{equation}
We shall also write $\mu_t=T_t(\mu)$, $t\in\bbN$, so that $T_t$ defines a nonlinear semigroup acting on  $\cP(\O)$: $T_0={\rm Id}$, $T_{t+s}=T_tT_s=T_sT_t$, for integers $t,s\ge 0$.
It is not difficult to see that one has convergence in $\cP(\O)$, namely that 
\begin{equation}\label{conv}
T_t(\mu)\longrightarrow\pi_\mu, \qquad  t\to\infty,
\end{equation}
where $ \pi_{\mu}:=\otimes _{i=1}^n \mu_{\{i\}}$ 
is the product measure on $\O$ with the same marginals as $\mu$.
Indeed, each recombination  
preserves the single site marginals and the repeated recombinations produce a fragmentation process, which eventually outputs the fully fragmented distribution $\pi_\mu$. 
We note that the equilibrium distribution depends on the initial state $\mu$, through its marginal distributions, reflecting the conservation law associated to the recombination mechanism. 
To quantify the distance from equilibrium  we use the total variation norm
 \begin{equation}\label{tvdist}
D(\mu,t) = \| T_t(\mu)-\pi_\mu \|_{\rm  TV},
\end{equation}
where the TV distance between two measures $\mu,\nu\in\cP(\O)$ is defined,  as usual, by
 \begin{equation}\label{def:tv}
\| \mu-\nu \|_{\rm  TV} =\, \max_{E\subset \O}\, |\mu(E)-\nu(E)|.
\end{equation}
It is worth noting that, in contrast with the case of ordinary Markov chains, the distance $D(\mu,t) $, for a fixed $\mu\in\cP(\O)$,  is not necessarily a monotone function of $t$. We refer to Remark \ref{rem:monotone} for  counterexamples and a more detailed discussion of this point. As observed in \cite{Sinetal2}, however, for all $\mu\in\cP(\O)$ one has
 \begin{equation}\label{tvdistbound}
D(\mu,t)\leq \tfrac12n(n-1) 2^{-t}\,.
\end{equation}
The above estimate can be obtained by considering the event, in the fragmentation process alluded to above, that full fragmentation has not yet occurred at time $t$. Indeed, at each step, for each $1\leq i<j\leq n$,  there is a probability $1/2$ that sites $i$ and $j$ get separated by the uniform random choice of $A\subset [n]$, namely that $i\in A$ and $j\in A^c$ or viceversa. Therefore, by a union bound, the probability that there exists an unseparated pair of sites at time $t$ satisfies the estimate \eqref{tvdistbound}. In fact, a similar argument provides an upper bound for more general nonlinear recombinations where the uniform distribution $2^{-n}$ in \eqref{onestep} is replaced by a generic distribution $\nu$ over subsets $A\subset [n]$, see \cite[Theorem 5]{Sinetal2}.  
The estimate \eqref{tvdistbound} predicts that $2\log_2n+O(1)$ steps suffice to obtain a small distance to equilibrium. This estimate is rather tight, in the sense that,  if $\mu = \frac12\ind_{-} + \frac12\ind_{+}$ is the monochromatic distribution (all $+1$ or all $-1$ with equal probabilities) then, as shown in  \cite[Theorem 11]{Sinetal2}, one has $D(\mu,t) \geq 1/4$ at time $\frac12\log_2n - C$ for some constant $C>0$. 
Closing the gap between these two bounds, and determining the precise asymptotic behavior of the distance to equilibrium, remained an open problem. A remark after \cite[Theorem 11]{Sinetal2}, suggests that the correct behavior at leading order should be $\log_2n$. 
Noting that the random time associated to full fragmentation can be shown to be concentrated around $2\log_2n$, this says in particular that convergence should occur precisely at one half of the fragmentation time.
Our findings confirm this prediction in a strong sense, and establish the cutoff phenomenon for the nonlinear recombination dynamics defined above. For simplicity of exposition, we state and prove our result in the setting of balanced initial conditions. The result however holds in greater generality, as we discuss in Section \ref{rem:extensions}.  
We define balanced distributions as 
the measures with symmetric marginals, that is we consider the set
$$\cB_n:=\{\mu\in \cP(\O): \; \forall i \in [n],\;\; \mu(\si_i=\pm1)
=1/2\}.$$
and define 
 \begin{equation}\label{defdt}
 D_n(t):= \max_{\mu\in\cB_n}\| T_t(\mu)-\pi \|_{\rm  TV},
 \end{equation}
where $\pi$ is now the uniform distribution over $\O$. It is not hard to check that $D_n(\cdot)$ defined in \eqref{defdt} is a non increasing function for each $n\in\bbN$.

\begin{theorem}\label{th:discrete} 
There exists a constant $c>0$ such that, for every integer $n\geq 1$ and every $t\ge 1$, setting $s:=n2^{-t}$, one has
\begin{gather}\label{ubound}
D_n(t)\leq s\,,
\\
D_n(t)\geq 1-2e^{-cs}\,.
\label{lbound}
\end{gather}
In particular,  if $t_{n,\lambda}=\lfloor \log_2 n +\lambda\rfloor$, $\lambda\in\bbR$, then
\begin{gather}\label{ubound1}
\lim_{\lambda\to\infty}\limsup_{n\to\infty} D_n(t_{n,\lambda})=0,
\\
\lim_{\lambda\to-\infty}\liminf_{n\to\infty} D_n(t_{n,\lambda})=1.
\label{lbound1}
\end{gather}
\end{theorem}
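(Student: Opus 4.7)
The proof splits naturally into the upper bound \eqref{ubound} and the lower bound \eqref{lbound}, from which the asymptotic consequences \eqref{ubound1}, \eqref{lbound1} follow at once upon substituting $t = t_{n,\lambda}$ and observing that $s = n 2^{-t_{n,\lambda}}$ tends to $0$ as $\lambda \to +\infty$ and to $+\infty$ as $\lambda \to -\infty$.

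Both bounds are based on the pathwise representation flagged in the abstract. To sample $\eta \sim T_t(\mu)$, one constructs a complete binary tree of depth $t$, attaches to each internal node $v$ an independent uniform random subset $A_v \subseteq [n]$, and to each leaf $\ell$ an independent copy $\sigma^{(\ell)} \sim \mu$. Each coordinate $i$ is routed down the tree by moving to the left child at $v$ iff $i \in A_v$, ending at a leaf $L_i$, and one sets $\eta_i := \sigma^{(L_i)}_i$. Since the indicators $\ind_{i \in A_v}$ are i.i.d. Bernoulli$(1/2)$ across both $i$ and $v$, the leaf assignments $L_1,\ldots,L_n$ are i.i.d.\ uniform on the $2^t$ leaves. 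Thus the random partition of $[n]$ induced by the ``same leaf'' relation is exactly the balls-in-bins partition for $n$ balls and $N = 2^t$ bins, which reduces much of the analysis to a classical occupancy problem.

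\emph{Upper bound.} Let $\mu \in \cB_n$; then $\hat\mu(\{i\}) = 0$ and $\pi_\mu = \pi$ (uniform). Grouping coordinates by their leaf, the Fourier coefficients read
\[
\widehat{\mu_t}(S) \;=\; \E_L\Bigl[\prod_\ell \hat\mu\bigl(S \cap L^{-1}(\ell)\bigr)\Bigr].
\]
Using $|\hat\mu(T)| \le 1$ together with $\hat\mu(\{i\}) = 0$, any factor corresponding to a singleton intersection vanishes, hence $|\widehat{\mu_t}(S)| \le p_t(S)$ where $p_t(S)$ is the probability that the partition of $S$ induced by $L$ has no singleton block. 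For $|S| = 2m$ even, the dominant contribution to $p_t(S)$ comes from perfect pairings, each of probability $\approx N^{-m}$, giving $p_t(S) \lesssim (2m-1)!!\,N^{-m}$; odd-size sets and partitions with larger blocks contribute strictly less. Combined with Parseval and $\|\mu_t - \pi\|_{\rm TV} \le \tfrac12\sqrt{\chi^2(\mu_t,\pi)}$,
\[
\|\mu_t - \pi\|_{\rm TV}^2 \;\le\; \tfrac14 \sum_{S \ne \emptyset} p_t(S)^2 \;\le\; \tfrac14 \sum_{m\ge 1}\binom{2m}{m}(s/2)^{2m} \;=\; \tfrac14\bigl((1-s^2)^{-1/2}-1\bigr),
\]
which is $O(s^2)$, hence $D_n(t) \le s$ after mild constant tightening; the regime $s \ge 1$ is trivial. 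The main obstacle here is verifying that partitions with larger blocks do not spoil the leading matching estimate, which requires a careful combinatorial accounting.

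\emph{Lower bound.} Specialize to the monochromatic initial measure $\mu^* = \tfrac12(\ind_+ + \ind_-)$, so that $\eta_i = \epsilon_{L_i}$ with i.i.d.\ Rademacher signs $\epsilon_\ell$. Writing $n_\ell = |\{i : L_i = \ell\}|$, the magnetization $Y := \sum_i \eta_i = \sum_\ell n_\ell \epsilon_\ell$ has conditional variance $V := \sum_\ell n_\ell^2$ given $L$, and a Poissonian Chernoff estimate yields $\Prob(V \ge c_1 n(1+s)) \ge 1 - e^{-c_2 s}$ for suitable universal constants. Conditionally on $V$, the Rademacher sum $Y$ has sub-Gaussian tails with variance proxy $V$, while under $\pi$, $Y$ is a standard Rademacher sum with variance $n$ (so Hoeffding yields $\Prob_\pi(|Y| \ge a) \le 2e^{-a^2/(2n)}$). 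Choosing an event of the form $E = \{Y^2 \ge \theta \, n(1+s)\}$ for small $\theta > 0$, one gets $\Prob_\pi(E) \le 2 e^{-\theta s/2}$, while combining the concentration of $V$ with the conditional Gaussian estimate gives $\Prob_{T_t(\mu^*)}(E) \ge 1 - e^{-c s}$. Hence $D_n(t) \ge \Prob_{T_t(\mu^*)}(E) - \Prob_\pi(E) \ge 1 - 2 e^{-cs}$ after adjusting constants. The hardest step is promoting the naive polynomial-in-$s$ control on $\Prob_{T_t(\mu^*)}(E^c)$, coming from a single statistic, to an exponential-in-$s$ bound; this most likely requires exploiting several approximately independent linear functionals of $\eta$ or leveraging sharper Chernoff bounds on the partition statistics themselves.
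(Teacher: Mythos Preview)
Your upper bound sketch via Fourier analysis is plausible and could be completed, though the ``careful combinatorial accounting'' you defer is real work. The paper takes a shorter route: it passes to the quenched density $h_t^\xi(\sigma)=\prod_i(1+\sigma_i q^\xi(i))$, subtracts the linear part $\sum_i\sigma_iq^\xi(i)$ (which is legitimate since it has zero $\bbE$-mean), and then combines an $L^2$ estimate with a trivial $L^1$ estimate to obtain $\|\mu_t-\pi\|_{\rm TV}\le\bbE[\langle q^\xi,q^\xi\rangle]=n2^{-t}$ directly. This avoids the block-partition combinatorics entirely.

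Your lower bound, however, has a genuine and fatal gap: the monochromatic measure $\mu^*=\tfrac12\ind_++\tfrac12\ind_-$ \emph{does not achieve} the bound $1-2e^{-cs}$. The paper proves (Theorem~\ref{monocase}) that for the monochromatic initial state the total variation distance converges to $\varphi(s)=\|\cN(0,1+s)-\cN(0,1)\|_{\rm TV}$, and computes $1-\varphi(s)=\Theta(\sqrt{\log s}/\sqrt{s})$ as $s\to\infty$. Since the TV distance is the supremum over all events, no choice of distinguishing event $E$ --- whether based on a single statistic, several linear functionals, or anything else --- can extract more than $\varphi(s)$ from $\mu^*$. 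Your closing remark correctly senses that a single statistic is too weak, but the obstruction is the initial measure itself, not the witness.

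The paper's remedy is to change the initial condition: it partitions $[n]$ into $\alpha=\Theta(s)$ blocks of size $p=80\cdot 2^t$ and takes $\mu$ to be an independent product of monochromatic measures on each block. Block structure is preserved by the dynamics, so the squared block magnetizations $\Xi_1,\ldots,\Xi_\alpha$ are i.i.d.\ under $\mu_t$. A second-moment (Paley--Zygmund) computation shows $\mu_t(\Xi_i\ge 20p)\ge 1/12$ for each block, and now independence across $\Theta(s)$ blocks yields the exponential decay $e^{-c\alpha}=e^{-cs}$ via a standard Chernoff bound. The block product is exactly what manufactures the ``several approximately independent functionals'' you were reaching for.
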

In the above statement and in the remainder of the paper we use the notation $\lfloor u\rfloor$ for the integer part.
In the language of Markov chain mixing, equations \eqref{ubound1} and \eqref{lbound1} state that the discrete time nonlinear recombination exhibits cutoff at time $\log_2n$ with window size $O(1)$.
While Theorem \ref{th:discrete} settles the issue for the cutoff phenomenon, it does not specify the precise profile of the function $D_n(\cdot)$ within the cutoff window, that is when  $|t-  \log_2n| = O(1)$. One reason why this profile is difficult to determine is that
the worst initial condition (the one for which the $\max$ in \eqref{defdt} is attained) varies when $t$ fluctuates within the cutoff window.
On the other hand, our asymptotics \eqref{ubound} and \eqref{lbound} are in a sense ``sharp up to constant'' in the limit when $s\to 0$ and $s\to \infty$ respectively (see the discussion in Section \ref{ideas} and Lemma \ref{lem:ubounds2}).
 We content ourselves with the profile of convergence to equilibrium in the monochromatic case, that is when the initial distribution is either all $+1$ or all $-1$ with probability $1/2$.
In this case we adopt the notation 
 \begin{equation}\label{defbardt}
 \bar D_n(t):= \| T_t(\mu)-\pi \|_{\rm TV},\qquad \mu =  \tfrac12\,\ind_{-} + \tfrac12\,\ind_{+},
 \end{equation}
where $\pi$ is the uniform distribution over $\O$. For all $s>0$, write 
\begin{equation}\label{eq:tvnormal}
\varphi(s):=\|\cN(0,1+s)- \cN(0,1) \|_{\rm TV}, 
 \end{equation}
 for the total variation distance between two centered normal random variables with variance $1+s$ and $1$ respectively.
   \begin{theorem}\label{monocase}
If $(t_n)$ is a sequence of integers 
such that
$\lim_{n\to \infty} n2^{-t_n}=s$, then
\begin{equation}\label{perfil}
  \lim_{n\to \infty}\bar D_n(t_n)= \varphi(s).
\end{equation}
\end{theorem}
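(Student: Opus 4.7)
The plan is to exploit the pathwise binary-tree representation of $\mu_t$. Iterating the identity $\mu_t=\mu_{t-1}\circ\mu_{t-1}$, one can construct a sample $\eta\sim\mu_t$ by attaching an independent sample of $\mu$ at each of the $2^t$ leaves of a full binary tree of depth $t$, drawing at every internal node $v$ an independent uniform subset $A(v)\subset[n]$, and applying the recombination recipe along the tree. Each coordinate $i\in[n]$ then reads its value from the unique leaf $\ell(i)$ reached by following, at every internal node $v$, the left (resp.\ right) branch according as $i\in A(v)$ or $i\in A(v)^c$. Since the indicators $\{\IND_{i\in A(v)}\}$ are i.i.d.\ Bernoulli$(1/2)$ jointly in $i$ and $v$, the labels $\ell(1),\ldots,\ell(n)$ are i.i.d.\ uniform on the $2^t$ leaves. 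For the monochromatic initial law, the configuration attached to each leaf $\ell$ is constantly $\xi_\ell$ (with $\xi_\ell\in\{-1,1\}$ i.i.d.\ Rademacher), so one can realize $\eta_i=\xi_{\ell(i)}$.

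The monochromatic $\mu$ is exchangeable, and recombination preserves exchangeability, so both $\mu_t$ and $\pi$ are exchangeable. For any two exchangeable probability measures on $\Omega$, the TV distance coincides with the TV distance between the pushforwards under the magnetization $M_n(\eta):=\sum_{i=1}^n\eta_i$, since both are uniform on the level sets of $M_n$. Letting $N_\ell:=\#\{i:\ell(i)=\ell\}$, which is multinomial $(n;2^{-t},\ldots,2^{-t})$, we obtain the in-law identity $M_n\stackrel{d}{=}\sum_\ell N_\ell\xi_\ell$ under $\mu_t$, while under $\pi$ the magnetization is a sum of $n$ i.i.d.\ Rademacher variables. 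Writing $V_n:=\sum_\ell N_\ell^2=n+2\#\{i<j:\ell(i)=\ell(j)\}$, an elementary computation for the multinomial yields $\E V_n=n+n(n-1)2^{-t}\sim n(1+s)$ and $\var V_n=O(n)$, so $V_n/n\to 1+s$ in probability. A simple moment bound also gives $\max_\ell N_\ell=O(\log n)$ with high probability.

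The heart of the proof is a local central limit theorem for $M_n$ on the lattice $\{-n,-n+2,\ldots,n\}$ under both laws. The classical local CLT gives $P_\pi(M_n=k)=\tfrac{2}{\sqrt{2\pi n}}e^{-k^2/(2n)}(1+o(1))$ uniformly on $|k|\le\sqrt{n\log n}$. Conditional on $(N_\ell)$, $M_n$ under $\mu_t$ is a sum of $2^t$ independent symmetric bounded terms with variance $V_n$; on the typical event $\{|V_n/n-(1+s)|\le\delta\}\cap\{\max_\ell N_\ell\le C\log n\}$, one obtains by Fourier inversion a conditional local CLT $P_{\mu_t}(M_n=k\mid(N_\ell))=\tfrac{2}{\sqrt{2\pi V_n}}e^{-k^2/(2V_n)}(1+o(1))$, uniformly on the same range of $k$. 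Integrating out $(N_\ell)$ by dominated convergence (the atypical event contributes $o(1)$) and interpreting the resulting sum as a Riemann sum of step $2/\sqrt n$ produces
\begin{equation}
\tfrac12\sum_{k}\bigl|P_{\mu_t}(M_n=k)-P_\pi(M_n=k)\bigr|\;\longrightarrow\;\tfrac12\int_\R\!\left|\tfrac{1}{\sqrt{2\pi(1+s)}}e^{-y^2/(2(1+s))}-\tfrac{1}{\sqrt{2\pi}}e^{-y^2/2}\right|\dd y=\varphi(s),
\end{equation}
as required.

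The main technical obstacle is the conditional local CLT, specifically the Fourier decay of $\E[e^{i\theta M_n}\mid(N_\ell)]=\prod_\ell\cos(\theta N_\ell)$ uniformly for $\theta$ in a compact set away from $\{0,\pi\}$. I would exploit the observation that the number of singleton blocks $K_1:=\#\{\ell:N_\ell=1\}$ concentrates at $ne^{-s}$ (a linear quantity), so that the crude bound $|\prod_\ell\cos(\theta N_\ell)|\le|\cos\theta|^{K_1}$ provides the required exponential decay in $n$ whenever $|\cos\theta|$ stays bounded away from $1$; the behavior near $\theta=\pi$ is harmless once $M_n$ is restricted to its natural support $n+2\bbZ$. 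A standard three-window splitting of the Fourier integral (a Gaussian window $|\theta|\lesssim n^{-1/2+\epsilon}$ where a Taylor expansion of $\prod\cos(\theta N_\ell)$ gives the Gaussian density, an intermediate window controlled by a similar expansion together with the bound $\max_\ell N_\ell=O(\log n)$, and a compact window controlled by the singleton estimate above) then completes the argument.
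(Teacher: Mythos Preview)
Your approach is correct and genuinely different from the paper's. Both start from the same pathwise representation of $\mu_t$, but then diverge: the paper conditions on the leaf \emph{values} $\xi$, writes the density $\rho_{t_n}(\sigma)=\bbE\big[\exp(\alpha_n^\xi\bar\sigma_n+\beta_n^\xi)\big]$ as a function of the normalized magnetization $\bar\sigma_n=\sum_i\sigma_i/\sqrt n$, and proves that $(\alpha_n^\xi,\beta_n^\xi)\Rightarrow(Z_s,-\tfrac12 Z_s^2)$ with $Z_s\sim\cN(0,s)$, from which $\rho_{t_n}(\sigma)$ is shown to converge in law (under $\pi$) to $\gamma_s(Z)$; the TV statement then follows by the bounded-continuous-function trick with $F(x)=1+x-|x-1|$. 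You instead condition on the leaf \emph{assignments} $(N_\ell)$, reduce to the magnetization via exchangeability, and compare the laws of $M_n$ through a conditional local CLT for $\sum_\ell N_\ell\xi_\ell$. Your route is more classical-probabilistic and yields the Gaussian profile by a Riemann sum; its cost is the Fourier/local-CLT machinery (the three-window argument, the singleton bound for $\prod_\ell\cos(\theta N_\ell)$, the lattice reduction at $\theta=\pi$), which is standard but heavier than the paper's density computation. The paper's route avoids local CLTs entirely and is lighter here; it also carries over more smoothly to the continuous-time setting, where the analogue of your $V_n=\sum_\ell N_\ell^2$ becomes a weighted sum over leaves of a random tree and the local CLT would be correspondingly more delicate, whereas the density argument adapts with the martingale $W_t$ playing the role of the extra randomness. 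One small point worth tightening in your write-up: when you ``integrate out $(N_\ell)$ by dominated convergence'', you are really controlling $\sum_k\big|\bbE[P(M_n=k\mid(N_\ell))]-\tfrac{2}{\sqrt n}g(k/\sqrt n)\big|$, which requires the conditional local-CLT remainder to be summable in $k$ uniformly on the good event, together with the continuity of $v\mapsto\|\cN(0,v)-\cN(0,1+s)\|_{\rm TV}$ at $v=1+s$; both are routine, but dominated convergence alone is not quite the right phrase.
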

Roughly speaking, the above result says that if $t_n=\log_2 n+\gl$ then $\lim_{n\to \infty}\bar D_n(t_n)= \varphi(2^{-\gl})$; however, $t_n$ needs to be an integer for $D_n(t_n)$ to make sense and only a discrete $n$-dependent set of values for $\gl$ can be attained.

\begin{figure}[h]
	\centering
	\includegraphics[width=7cm]{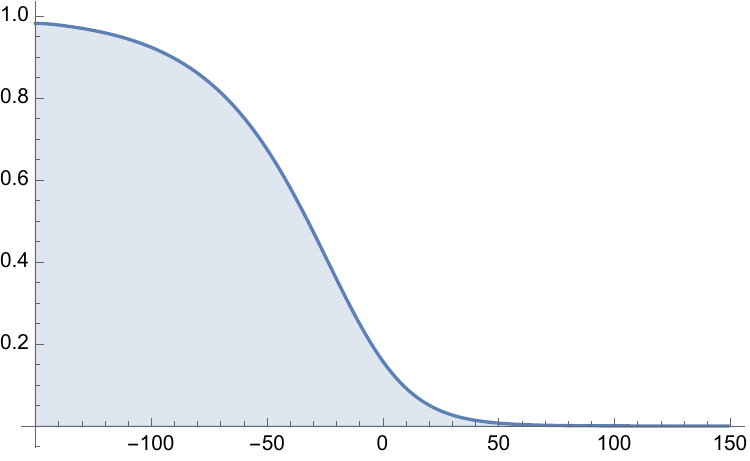}\\
	\caption{Plot of 
	the cutoff profile $\varphi\big(2^{-\lambda}\big)$ 
	for  $\lambda\in[-150,150]$.}	\label{fig:fig}
\end{figure}


\subsection{Continuous time dynamics}
We turn to the analysis of the continuous time version of nonlinear recombinations. Given an initial state $\mu\in\cP(\O)$, we define $S_t(\mu)\in\cP(\O)$,  $t\ge 0$, by the differential equation 
\begin{align}\label{cteq}
\frac{d}{dt}\,S_t(\mu) = S_t(\mu)\circ S_t(\mu)\, -\, S_t(\mu)\,, \qquad S_0(\mu)=\mu,
\end{align}
where the symbol `$\circ$' is defined by the averaged uniform recombination in \eqref{onestep}. It can be checked that for each $\mu\in\cP(\O)$ there exists a unique solution to the above Cauchy problem, see e.g.\ \cite{baake2015general} for more general statements in this direction. We will actually provide an explicit construction of such a solution. Thus, equation \eqref{cteq} defines a nonlinear semigroup acting on  $\cP(\O)$: $S_0={\rm Id}$, $S_{t+s}=S_tS_s=S_sS_t$, for all $t,s\ge 0$, which is the natural continuous time analog of the semigroup $T_t$, $t\in\bbN$, defined in \eqref{recstep}. 

We stress, however, that in contrast with linear  Markov chains, the continuous time evolution here cannot be obtained by a simple randomization of the number of steps taken by the discrete time dynamics. This is due to the fact that the collision product defined by \eqref{onestep} does not have the associativity property. For instance, in general one has $$(\mu\circ\mu)\circ(\mu\circ\mu)\neq ((\mu\circ\mu)\circ\mu)\circ\mu.$$
Therefore, the ``collision history'', not only the number of collisions, affects the final distribution. In fact, as we will see, solutions to the continuous time equation \eqref{cteq} admit an explicit representation in terms of a growing random binary rooted tree $\{\cT_t, t>0\}$, which records the history of all collisions contributing to the final state. This is equivalent to the classical representation of the solution of the Boltzmann equation in terms of Wild sums and McKean trees \cite{CCG,mckean1966speed,mckean1967exponential,Wild}. 
The continuous model displays a behavior which is quite different from the discrete one: the value of the mixing time as well as the shape of the profile of convergence to equilibrium are changed.
As in the discrete time model, one has the convergence 
\begin{equation}\label{ctconv}
S_t(\mu)\longrightarrow\pi_\mu, \qquad  t\to\infty,
\end{equation}
to the product measure $\pi_\mu$ with the same marginals as the initial state $\mu$. Quantitative bounds for the speed of convergence have been obtained for the relative entropy  and for the total variation distance \cite{CapPar,CapSin}. In particular, for any $\mu\in \cP(\O)$ one has
\begin{equation}\label{ctconv2}
\|S_t(\mu)-\pi_\mu\|_{\rm TV}\leq \tfrac12\,n(n-1)\,e^{-t/2},\qquad t>0,
\end{equation}
see Remark \ref{Rk:UpperBd_ct}.
Thus, a time $t=4\log n + O(1)$ is sufficient to achieve a small TV distance, regardless of the initial state. Here, we prove that the optimal location is actually $2\log n$ and show that the dynamics displays a cutoff phenomenon. Again, for simplicity, we restrict to balanced initial states $\mu\in\cB_n$, and define  
\begin{equation}\label{defdtct}
 d_n(t):= \max_{\mu\in\cB_n}\| S_t(\mu)-\pi \|_{\rm  TV},
 \end{equation}
where $\pi$ is the uniform distribution over $\O$. 
 
\begin{theorem}\label{th:continuous} 
There exist a constant $c>0$ such that, 
for all integer $n\geq 1$ and all $t>0$, setting $r:=ne^{-t/2}$, one has 
\begin{gather}\label{uboundct}
d_n(t)\leq r
\\
 d_n(t)\geq 1-2\exp\left( -c \min( \sqrt{r(1\vee \log (1/t))},n) \right)
\label{lboundct}
\end{gather}
In particular,  if $t_{n,\lambda}:=  2\log n + \lambda$, with $\lambda\in\bbR$, then 
\begin{gather}\label{uboundct1}
\lim_{\lambda\to\infty} \limsup_{n\to\infty}d_n(t_{n,\lambda})=0
\\
\lim_{\lambda\to-\infty}\liminf_{n\to\infty}d_n(t_{n,\lambda})=1
\label{lboundct2}
\end{gather}
\end{theorem}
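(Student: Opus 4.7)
My plan is to follow the blueprint of the discrete-time Theorem~\ref{th:discrete}, replacing the deterministic binary tree of depth $t$ by a \emph{Yule branching tree} $\cT_t$ in which each particle branches independently at rate $1$ from a single root. By the pathwise representation announced just after~\eqref{ctconv}, $S_t(\mu)$ can be written as an expectation of tensor products $\bigotimes_\ell \mu_{A_\ell}$, where the partition $(A_\ell)_\ell$ of $[n]$ is obtained by drawing independent uniform subsets at the internal nodes of $\cT_t$ and composing the induced half-splits along each root-to-leaf path. Coupling this construction with $\pi_\mu$ through the shared randomness, full fragmentation (every $|A_\ell|=1$) forces the two measures to coincide, and this yields the basic inequality
\[
  \|S_t(\mu)-\pi_\mu\|_{\rm TV} \;\leq\; \Prob\bigl(\exists\,\ell\colon |A_\ell|\geq 2\bigr).
\]

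For the upper bound~\eqref{uboundct}, a direct calculation on the Yule tree shows that any fixed pair $\{i,j\}$ is unseparated at time $t$ with probability $e^{-t/2}$: the particle of $\cT_t$ carrying both $i$ and $j$ branches at rate $1$, and each branching event splits them with probability $1/2$, so the pair's common survival time is exponential with parameter $1/2$. A naive union bound on pairs recovers only the weaker $\tfrac12 n(n-1)e^{-t/2}$ of~\eqref{ctconv2}. To save the extra factor of $n$ and reach $r=ne^{-t/2}$, I will estimate instead the expected number of \emph{non-singleton blocks} of the partition, which a first-moment calculation on the Yule tree bounds by a constant times $r$, exactly as in the discrete case.

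For the lower bound~\eqref{lboundct}, I take the monochromatic initial condition $\mu_\star = \tfrac12\ind_{+}+\tfrac12\ind_{-}$ as an extremal choice in $\cB_n$. Conditionally on the partition $P=(A_\ell)_\ell$, one has $\sigma_i = X_{\ell(i)}$ for i.i.d.\ Rademacher variables $X_\ell$ attached to the blocks, so every symmetric polynomial in the $\sigma_i$ decomposes block-wise. The natural distinguishing test is the quadratic statistic
\[
  T_2(\sigma) \;=\; \sum_{i<j}\sigma_i\sigma_j \;=\; \tfrac12\bigl((\textstyle\sum_i\sigma_i)^2-n\bigr).
\]
Under $\pi$, $T_2/n$ has mean $0$ and sub-exponential tails; under $S_t(\mu_\star)$ one has
\[
  \E[T_2] \;=\; \tfrac12 \E\bigl[\textstyle\sum_\ell |A_\ell|^2 - n\bigr] \;=\; \tfrac12 n(n-1)e^{-t/2} \;\sim\; \tfrac12 nr.
\]
The event $\{T_2>nr/4\}$ then separates the two laws with the required probability, provided one shows that, under $S_t(\mu_\star)$, the random variable $\sum_\ell |A_\ell|^2$ is concentrated around its mean with Gaussian-type tails on a scale $\sqrt{r}$; this is the origin of the $\sqrt{r}$ in the exponent of~\eqref{lboundct}. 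The factor $1\vee\log(1/t)$ accommodates the short-time regime, where the Yule tree has produced only a handful of branching events and $\sum_\ell |A_\ell|^2$ is dominated by an atypically large block; the truncation at $n$ then handles the regime where the concentration scale would exceed the degrees of freedom available.

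The principal obstacle, absent in the discrete case, is the \emph{joint control of the sequence of fragment sizes} in the random tree $\cT_t$. Unlike the discrete situation --- where $\cT_t$ is the complete binary tree of depth $t$ and the block sizes follow an explicit multinomial-type distribution --- here the Yule tree has a random number of leaves of order $e^{t}$, and the induced partition interpolates between a few very large blocks for $t$ small and many singletons for $t$ large. The concentration estimate on $\sum_\ell |A_\ell|^2$ needed for both the refined upper bound and the Gaussian-tail lower bound requires a dedicated branching-process analysis, which is exactly the ``more elaborate probabilistic analysis'' announced in the abstract.
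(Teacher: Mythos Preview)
Your proposal has genuine gaps on both sides.

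\textbf{Upper bound.} Counting non-singleton blocks does not save the extra factor of $n$. Conditionally on the tree, a leaf $x$ carries at least two indices with probability at most $\binom{n}{2}4^{-|x|}$, and summing over leaves gives $\binom{n}{2}\sum_{x\in L(\cT_t)}4^{-|x|}$, whose $\bE$-expectation is $\binom{n}{2}e^{-t/2}$---exactly the pairwise union bound~\eqref{ctconv2} you were trying to improve. This is also \emph{not} how the discrete upper bound works: Lemma~\ref{lem:ubounds} does not count blocks. The paper instead uses the quenched product representation $\mu_t=\bE\otimes\bbE[\mu^{\xi,\cT_t}]$ with density $h^{\xi,\cT_t}(\sigma)=\prod_i(1+\sigma_i q^{\xi,\cT_t}(i))$, subtracts the linear term $\sum_i\sigma_i q^{\xi,\cT_t}(i)$ (which has zero $\bbE$-mean) before passing to $L^2(\pi)$, and is left with $\bE\otimes\bbE\,\langle q^{\xi,\cT_t},q^{\xi,\cT_t}\rangle$. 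The identity $\bbE\bigl[\langle q^{\xi,\cT_t},q^{\xi,\cT_t}\rangle\bigr]=n\sum_{x\in L(\cT_t)}4^{-|x|}=ne^{-t/2}W_t$ together with $\bE[W_t]=1$ then gives exactly $r$; the centering trick is what kills the quadratic dependence on $n$.

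\textbf{Lower bound.} With the pure monochromatic $\mu_\star$ and the test $T_2=\tfrac12\bigl((\sum_i\sigma_i)^2-n\bigr)$, your concentration step fails. Even granting that $\sum_\ell|A_\ell|^2$ concentrates near $n(1+r)$, conditionally on the partition one has $\sum_i\sigma_i=\sum_\ell|A_\ell|X_\ell$ for i.i.d.\ Rademacher $X_\ell$, so $T_2\approx\tfrac12\,n(1+r)(Z^2-1)$ for an approximately standard normal $Z$: the fluctuations of $T_2$ under $S_t(\mu_\star)$ are of the \emph{same order} as its mean, and $\{T_2>nr/4\}$ has probability bounded away from $0$ and $1$, not $1-e^{-c\sqrt r}$. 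This matches the discrete picture, where the monochromatic initial state only gives $1-\bar D_n\asymp\sqrt{\log s}/\sqrt{s}$ rather than exponential decay. The paper therefore takes $\mu$ to be a \emph{product} of $\alpha\asymp\sqrt{r(1\vee\log(1/t))}$ independent monochromatic blocks and tests the fraction of blocks whose squared magnetization exceeds a threshold; block independence under $\pi$ gives the $e^{-c\alpha}$ tail directly, while a Paley--Zygmund argument, run conditionally on the tree event $\{W_t\ge(1\vee\log(1/t))/\alpha\}$, handles the $\mu_t$ side. The $\log(1/t)$ correction enters through the tail estimate $\bP(W_t<\gep)\le 2\exp\bigl(-c(1\vee\log(1/t))/\gep\bigr)$ for the tree martingale $W_t$---it is a property of the branching process itself, not of the block-size sequence $|A_\ell|$.
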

We remark that  \eqref{uboundct} is entirely analogous to the upper bound in discrete time \eqref{ubound}. On the other hand, the lower bound \eqref{lboundct}, which is in some sense optimal as discussed in Remark \ref{thevalueoft}, displays a different asymptotics with respect to \eqref{lbound}. Moreover, we note that the logarithmic term in $1/t$ in the exponent allows us to capture some nontrivial behavior for the  time window $t\in (0,1)$ during which $d_n(t)$ drops from $1-2^{-n}$ to $1-e^{-\Theta(\sqrt{n})}$. 
  
As for the analog of Theorem \ref{monocase}, we obtain the following cutoff profile in the case of monochromatic initial states. 
In analogy with \eqref{defbardt}, we write $\bar d_n(t)$ for the distance to stationarity when $\mu = \frac12\ind_{-}+\frac12\ind_+$. 

   \begin{theorem}\label{monocasect}
   %
If $t_{n,\lambda}=  2\log n + \lambda$, with $\lambda\in\bbR$, then $\bar d_n(\cdot)$ satisfies 
 \begin{equation}\label{profilect}
\lim_{n\to \infty} \bar d_n(t_{n,\lambda}) = f(\lambda)\,,
\end{equation}
where $f:\R\mapsto[0,1]$ is a decreasing continuous function with $f(\l)\to 1$, as $\l\to-\infty$, and $f(\l)\to 0$, as $\l\to\infty$.
\end{theorem}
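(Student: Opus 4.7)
The strategy is to exploit the strong symmetries of the monochromatic initial condition, reduce the total variation distance to the convergence in law of a single observable (the magnetization), and analyze the latter via the tree representation announced in the introduction.

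\emph{Reduction to the magnetization.} The distribution $S_t(\mu)$ inherits from $\mu$ two symmetries preserved by the dynamics: permutation invariance of $[n]$ and the global sign flip $\sigma\mapsto-\sigma$. Since $\pi$ shares both symmetries, $S_t(\mu)(\sigma)$ depends on $\sigma$ only through $|M(\sigma)|$, where $M(\sigma)=\sum_i\sigma_i$. Grouping configurations with the same value of $|M|$ yields
\[
\bar d_n(t)=\bigl\|\mathrm{Law}(|M|\mid S_t(\mu))-\mathrm{Law}(|M|\mid\pi)\bigr\|_{\rm TV}.
\]
Under $\pi$ the classical CLT gives $M/\sqrt n\to\cN(0,1)$, so the task reduces to identifying the limiting law of $M/\sqrt n$ under $S_t(\mu)$ at $t=2\log n+\lambda$.

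\emph{Tree representation and Yule martingale.} For monochromatic $\mu$, the pathwise representation says that $\sigma\sim S_t(\mu)$ is obtained as follows: sample a rate-$1$ Yule tree $\cT_t$ of age $t$ starting from a single leaf; assign each coordinate $i\in[n]$ to a leaf $\ell(i)$ via an independent root-to-leaf simple random walk (left/right with probability $1/2$ at each internal node); draw i.i.d.\ signs $\varepsilon_\ell\in\{\pm1\}$ at the leaves and set $\sigma_i=\varepsilon_{\ell(i)}$. Writing $p_\ell=2^{-d_\ell}$ for the depth-weighted walk probability and $N_\ell=|\{i:\ell(i)=\ell\}|$, one has $M=\sum_\ell N_\ell\varepsilon_\ell$, so the conditional variance of $M$ given $\cT_t$ and the assignment is $\sum_\ell N_\ell^2$. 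A direct generator computation shows that $Y_t:=e^{t/2}\sum_\ell p_\ell^2$ is a nonnegative martingale (in particular $\E[\sum_\ell p_\ell^2]=e^{-t/2}$); exploiting the branching recursion $Y_t\stackrel{d}{=}\tfrac{1}{4}e^{T/2}(Y'_{t-T}+Y''_{t-T})$, where $T\sim\mathrm{Exp}(1)$ is the first split time and $Y',Y''$ are i.i.d.\ copies, one shows that $Y_t\to Y_\infty$ almost surely with $Y_\infty$ a non-degenerate positive random variable. Combined with $\E[\sum_\ell N_\ell^2\mid\cT_t]=n+n(n-1)\sum_\ell p_\ell^2$, at $t=2\log n+\lambda$ the rescaled partition variance $\tfrac1n\sum_\ell N_\ell^2$ converges in law to $1+W_\lambda$, where $W_\lambda:=e^{-\lambda/2}Y_\infty$.

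\emph{Conditional CLT and conclusion.} The characteristic function factorizes as $\E[e^{i\theta M/\sqrt n}]=\E[\prod_\ell\cos(\theta N_\ell/\sqrt n)]$. After restricting to the event $\{\max_\ell p_\ell\le \e_n\}$ (which has probability $1-o(1)$ by a tail estimate for $D_{\min}:=\min_\ell d_\ell$ deduced from the fact that the number of leaves of depth $\le k$ at time $t$ has mean of order $t^ke^{-t}$), a uniform Taylor expansion $\log\cos x=-x^2/2+O(x^4)$ yields
\[
\E\bigl[e^{i\theta M/\sqrt n}\bigr]\longrightarrow\E\bigl[e^{-\theta^2(1+W_\lambda)/2}\bigr],
\]
i.e.\ the characteristic function of $\sqrt{1+W_\lambda}\,Z$ with $Z\sim\cN(0,1)$ independent of $W_\lambda$. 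Combining with the first paragraph,
\[
\bar d_n(t_{n,\lambda})\longrightarrow f(\lambda):=\bigl\|\mathrm{Law}(\sqrt{1+W_\lambda}\,Z)-\cN(0,1)\bigr\|_{\rm TV}.
\]
Monotonicity and continuity of $f$ follow from the fact that $\lambda\mapsto W_\lambda$ is (a.s., jointly) strictly decreasing and continuous; the boundary behaviour $f(+\infty)=0$ and $f(-\infty)=1$ follows from $W_\lambda\to0$ (resp.\ $\infty$) a.s., or alternatively directly from the cutoff bounds of Theorem~\ref{th:continuous}.

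\emph{Main obstacle.} The crucial difficulty, absent in the discrete case, is that the Yule tree is genuinely random and does not concentrate inside the cutoff window: with positive but vanishing probability, leaves of bounded depth survive, carrying a macroscopic number of coordinates; on such trees the conditional law of $M/\sqrt n$ is non-Gaussian and the Taylor expansion above fails to be uniformly valid. Handling this requires a careful truncation argument, exploiting the decay of $\P[D_{\min}\le k]$ for the Yule tree together with $L^2$-moment controls on the tree functional $\sum_\ell p_\ell^4$, to show that ``heavy'' trees contribute only $o(1)$ to the TV distance. This is precisely the feature that makes the continuous-time profile $f$ qualitatively different from the two-Gaussian profile $\varphi$ of Theorem~\ref{monocase}.
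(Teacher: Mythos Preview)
Your symmetry reduction to the magnetization is correct, and the target law you identify for $M/\sqrt n$ under $S_{t_n}(\mu)$---the scale mixture $\sqrt{1+W_\lambda}\,Z$ with $W_\lambda=e^{-\lambda/2}W_\infty$---matches the paper's formula \eqref{eq:nulambda}. The characteristic-function/$\log\cos$ computation you sketch is also essentially what the paper does in Lemma~\ref{lem:alphabeta_cont} (there for the quantity $e^{t/4}\bar q^{\xi,\cT_t}$ rather than $M/\sqrt n$, but the mechanism is the same).

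The genuine gap is the final inference
\[
\bigl\|\mathrm{Law}(|M|\mid S_{t_n}(\mu))-\mathrm{Law}(|M|\mid\pi)\bigr\|_{\rm TV}\;\longrightarrow\;\bigl\|\mathrm{Law}(\sqrt{1+W_\lambda}\,Z)-\cN(0,1)\bigr\|_{\rm TV}.
\]
Two separate weak convergences of lattice-supported laws $P_n,Q_n$ to absolutely continuous limits $P,Q$ do \emph{not} imply $\|P_n-Q_n\|_{\rm TV}\to\|P-Q\|_{\rm TV}$. What is required is convergence in law, under $\pi$, of the likelihood ratio $\rho_{t_n}(\sigma)=\dd S_{t_n}(\mu)/\dd\pi$: once that is in hand, $\|S_{t_n}(\mu)-\pi\|_{\rm TV}=\tfrac12\int|\rho_{t_n}-1|\dd\pi$ passes to the limit via the bounded continuous test function $x\mapsto 1+x-|x-1|$. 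This is precisely the content of Proposition~\ref{maincase_cont}: the paper writes $\rho_{t_n}(\sigma)=G_n(\bar\sigma_n)$ for an explicit deterministic function $G_n$, proves $G_n\to G$ uniformly on compacts (Lemma~\ref{lem:gG_cont}), and combines this with the CLT for $\bar\sigma_n$ under $\pi$. Your computation, by contrast, only controls integrals of the form $\int e^{i\theta\bar\sigma_n}\rho_{t_n}\,\dd\pi$, which says nothing about the distribution of $\rho_{t_n}$ itself under $\pi$.

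You could in principle salvage your route by upgrading to a \emph{local} CLT for $M$ under $S_{t_n}(\mu)$ (pointwise convergence of rescaled pmfs), from which $L_n=\dd P_n/\dd Q_n\to \dd P/\dd Q$ would follow; but this is a substantially stronger statement than the characteristic-function convergence you outline, and the truncation you describe in the ``Main obstacle'' paragraph does not provide it. The paper's density-based argument sidesteps the issue entirely.
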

As we will see, the function $f$ appearing in Theorem \ref{monocasect} admits the following more explicit representation:
 there exists a positive random variable $W$, with expected value $\bE[W]=1$, such that 
 \begin{equation}\label{eq:nulambda}
f(\lambda)=\big\|\cN(0,1)-\bE\big[\cN(0,1+e^{-\lambda/2}W)\big]\big\|_{\rm TV},
\end{equation}
that is $f(\lambda)$ is the total variation distance between the standard normal  and a mixture of normal distributions, defined as the expected value of the centered normal with random variance $1+  e^{-\lambda/2}W$. Note that if  $W$ is replaced deterministically by $1$, then one recovers the profile $\varphi(e^{-\lambda/2})$, a rescaled version of the discrete time result in Theorem \ref{monocase}.  In a sense, 
the random variable $W$ serves as a witness to the presence of additional randomness within the collision history in the continuous time setting, see Proposition \ref{maincase_cont}.
We conclude this introduction with some of the ideas involved in the proofs of the results presented so far, and with some further comments and open questions. 

\subsection{Ideas of proof, comments, and open problems}\label{ideas}
We start with the observation that, in discrete time, by the definitions \eqref{onestep} and \eqref{recstep},  $\mu_t=T_t(\mu)$ can be viewed graphically as 
the average of the distribution at the root of a rooted regular binary tree with depth $t$,
where each internal node represents a random collision and 
the $2^t$ leaves of the tree are assigned i.i.d.\ samples from $\mu$, which we denote by $\xi=\{\xi(x),\,x=1,\dots,2^t\}$, and the distribution at each internal node $u$ is computed as the collision product of the two distributions at the children of $u$. Indeed, the case $t=1$ was discussed after \eqref{onestep}, and the case of general $t$ is obtained by recursion, see Lemma \ref{lem:rep}. 
Thus we write $\mu_t = \bbE[\mu_t^\xi]$, where $\mu^\xi_t$ is the conditional distribution at the root given the realization $\xi$. 
Equivalently, we view $\xi$  as a random environment, and the measure $\mu^\xi_t$ as a quenched distribution. By convexity, an upper bound is obtained by estimating the  TV distance by the average of the quenched 
$L^2$ norm:
 \begin{equation}\label{ubidea1}
  \| \mu_t-\pi \|_{\rm  TV}=\tfrac12\,\| \bbE\, (h^\xi_t-1) \|_{L^1(\pi)} \leq \tfrac12\,\bbE\, \| \hat h^\xi_t-1 \|_{L^2(\pi)}\,,
\end{equation}
where $h^\xi_t=\dd\mu^\xi_t/\dd\pi$ is the relative density, $\hat h^\xi_t=h^\xi_t + g^\xi$ for any function $g^\xi$ such that $\bbE[g^\xi]=0$, and $\| \cdot \|_{L^p(\pi)}$ denotes the norm in $L^p(\O,\pi)$. The proof of the  upper bound in Theorem \ref{th:discrete} 
then proceeds with an estimation of the quenched $L^2$ norm featuring in \eqref{ubidea1}, which in turn is obtained by an expansion of the shifted density $\hat h^\xi_t$ for a suitable choice of $g^\xi$. 

Before discussing the proof of the lower bound in Theorem \ref{th:discrete}, it is convenient to address first the profile result 
in Theorem \ref{monocase}.    
The proof of this is based on showing that for the monochromatic $\mu$, if $\si\in\O$ is uniformly distributed, then  
$h(\si) = \mu_t(\si)/\pi(\si)$, the density of $\mu_t$ with respect to $\pi$, is well approximated, as $n\to\infty$ and $n2^{-t}\to s$, by the random variable 
 \begin{equation}\label{lbidea1}
\frac1{\sqrt{s+1}}\,\exp{\left(\frac {s\big(\sum_{i=1}^n\si_i\big)^2 }{2n(s+1)}\right)}.
\end{equation} 
Once this is achieved, the representation in terms of normal random variables appearing in \eqref{perfil} follows by using the CLT to replace $(\sum_{i}\si_i)^2/n$ by the square of a standard normal. 

The function  $\varphi(s)$ in Theorem \ref{monocase} satisfies $\varphi(s) = \frac{s}{\sqrt{2e\pi}} + o(s)$ as $s\to 0$, see Appendix \ref{app:Gauss}. This shows that the upper bound \eqref{ubound} captures the optimal linear dependence in $s$, as $s$ becomes small. On the other hand, $1-\varphi(s)=\frac{2\sqrt{\log s}}{\sqrt{2\pi s}}(1+o(1))$ as $s\to\infty$, see again Appendix \ref{app:Gauss}. Therefore, the lower bound \eqref{lbound} cannot be achieved by the monochromatic distribution $\mu$ when $s$ is large. Indeed, the proof of the lower bound in Theorem \ref{th:discrete} will use an initial condition $\mu$ consisting of a suitable product of monochromatic distributions on distinct blocks, which achieves a better lower bound than the monochromatic distribution on a single block.
The analysis for this product measure is based on the detailed knowledge of the monochromatic case 
gathered within the proof of Theorem \ref{monocase}.

The strategy of proof in the continuous time setting is similar, with the crucial difference that in the graphical construction mentioned above the deterministic binary tree must be replaced by a random binary tree $\cT_t$ encoding the  collision history.  
 The tree $\cT_t$ can be defined, recursively, as follows: at time zero there is only one node, the root; after an exponentially distributed random time with mean $1$, the root gives rise to two new particles; independently,  each newly created particle repeats the same random splitting, and so on. $\cT_t$ represents the resulting random tree at time $t$, and the distribution $S_t(\mu)$ is calculated as an average over the realization of $\cT_t$ of the distribution obtained at the root when each internal node is assigned the distribution given by the collision product of the distributions assigned to its two children,  and the leaves are assigned the distribution $\mu$, see Lemma \ref{lem:rep_ct}. We let $L(\cT_t)$ denote the set of leaves of  $\cT_t$ and write $|x|$ for the depth of a leaf $x\in L(\cT_t)$, which is defined as the number of  splittings  along the path from the root to $x$.  
An important role in the analysis of the continuous time dynamics is played by the process defined as 
 \begin{equation}
W_t =e^{t/2} \sum_{x\in L(\cT_t)}4^{-|x|}\,,\qquad t\geq 0.
   \end{equation}
Using size-biasing, and a spinal representation in the spirit of  \cite{chauvin1988kpp,kyprianou2004travelling,lyons1995conceptual}, the process $W_t$ will be shown to define a uniformly integrable martingale, converging to a positive random variable $W_\infty$ with mean $\bE[W_\infty]=1$. 
In a sense that will be made more precise by our results later on, one can say that 
the order parameter $s=n2^{-t}$, governing the cutoff transition in the discrete time dynamics, must be replaced in the continuous time setting by the random variable
\[
ne^{-t/2}W_t\,\longrightarrow\, r W_\infty.
\] 
In particular, the random variable $W$ appearing in the limiting profile function in Theorem \ref{monocasect} is precisely $W_\infty$.

As anticipated, there are no conceptual difficulties in extending our cutoff result to more general settings where balanced distributions $\mu\in\cB_n$ are replaced by arbitrary probability distributions $\mu$ on $\O$, provided that one assumes that the family of measures $\mu$ considered is such that the marginals $\mu(\si_i=1)$ are uniformly bounded away from $0$ and $1$. We give some more detailed comments in Section \ref{rem:extensions} below. Moreover, we believe that a cutoff phenomenon should continue to hold even for more general state spaces, that is when $\O=\{-1,+1\}^n$ is replaced by $\bbX^n$, with $\bbX$ a generic  finite alphabet, and all marginals are assumed to be uniformly nondegenerate.  

Finally, let's address some open questions that naturally emerge from the preceding discussion.
A first question concerns the existence of a cutoff phenomenon for nonlinear recombinations where the uniform choice of $A$ in the single recombination step \eqref{onestep} is replaced by a more general distribution $\nu$ over subsets of $[n]$. The speed of convergence in such models can be related to the probability, under $\nu$,  of separating any two given sites at each step, which gives the natural analog of \eqref{tvdistbound} and \eqref{ctconv2}, see \cite{Sinetal2} for the discrete time, and \cite{CapPar} for the continuous time.   We believe that a cutoff phenomenon should occur for such cases as well, but our proofs make explicit use of the uniformity in \eqref{onestep}. A second problem concerns the analysis of more general quadratic systems with nontrivial interactions in the stationary distribution, see \cite{CapSin,Sinetal} for  the definition of a general framework for quadratic reversible dynamical systems in combinatorial settings. A particular example is the nonlinear dynamics for Ising systems, obtained by a  modification of the one step recombination \eqref{onestep} where the content swap on the set $A$ is accepted or rejected, in such a way that the dynamics converges to the Ising distribution on a given interaction graph. This may be seen as a nonlinear version of the usual Metropolis algorithm.
In the high-temperature regime, tight bounds on the speed of convergence  for these models were recently derived \cite{CapSin2}. Drawing an analogy with the known cutoff results for high-temperature spin systems \cite{lubetzky2017universality}, we conjecture a similar cutoff phenomenon in the nonlinear version as well.

\section{Discrete time}
We start with a representation of the distribution $\mu_t=T_t(\mu)$ at time $t\in\bbN$, valid for any $\mu\in\cP(\O)$. This representation is convenient since it provides, conditionally given some random environment $\xi$ to be defined below, an independence between the $n$ bits.

\subsection{Graphical construction and fragmentation}\label{sec:graph_repr}
By definition, the solution $\mu_t=T_t(\mu)$, of the dynamical system \eqref{recstep} admits the following simple graphical interpretation. Consider the rooted regular binary tree with depth $t$, and assign to each node $u$ a distribution $\nu_u\in\cP(\O)$ in such a way that each of the $2^t$ leaves is assigned the distribution $\mu$ and  the distribution at each internal node $u$ is computed as the collision product of the two distributions at the children of $u$. Then $\mu_t$ is the distribution at the root of the tree,  see Figure \ref{fig:fig1} for the case $t=2$.
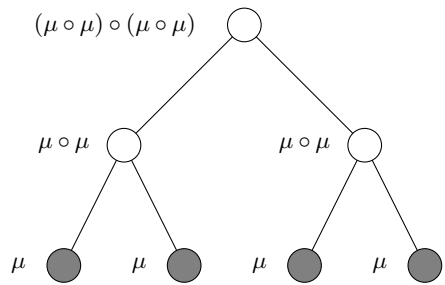
\begin{figure}[h]\label{fig:fig1}
\center
\begin{tikzpicture}[scale=0.8]
        \draw  (1,-1) -- (2,1);
\draw  (3,-1) -- (2,1);
   \draw  (2,1) -- (4,3);
    \draw  (5,-1) -- (6,1);
    \draw  (7,-1) -- (6,1);
    \draw  (6,1) -- (4,3);
    \node[shape=circle, draw=black, fill = white, scale = 1.5]  at (4,3) {}; 
    \node[shape=circle, draw=black, fill = white, scale = 1.5]  at (2,1) {};
    \node[shape=circle, draw=black, fill = gray, scale = 1.5]  at (1,-1) {};
    \node[shape=circle, draw=black, fill = gray, scale = 1.5]  at (3,-1) {};
    \node[shape=circle, draw=black, fill = white, scale = 1.5]  at (6,1) {};
    \node[shape=circle, draw=black, fill = gray, scale = 1.5]  at (5,-1) {};
    \node[shape=circle, draw=black, fill = gray, scale = 1.5]  at (7,-1) {};
        \node at (1/4,-1) {$\mu$};
\node at (9/4,-1) {$\mu$};
    \node at (17/4,-1) {$\mu$};
    \node at (25/4,-1) {$\mu$};
    \node at (1,1) {$\mu \circ \mu$};
    \node at (20/4,1) {$\mu \circ \mu$};
    \node at (7/4+.1,3) {$(\mu\circ \mu) \circ (\mu \circ \mu)$};
\end{tikzpicture}
\caption{$T_2(\mu)$ is the distribution at the root of the binary tree with depth $2$.}
\end{figure}

It is useful to have a pathwise implementation of this graphical construction. 
Set $N:=2^t$, and let $\xi(1),\dots,\xi(N)$ denote $N$ independent random variables with law $\xi(x)\sim \mu$ for every leaf $x\in\{1,\ldots,N\}$. Define
\begin{equation}\label{eq:qxii}
q^\xi(i):= \frac1{N}\sum_{x=1}^{N} \xi_i(x),\qquad i\in[n],
\end{equation}
where $\xi_i(x)\in\{-1,+1\}$ denotes the $i$-th bit in the string $\xi(x)\in\O$. 
The next lemma expresses $\mu_t$ as an average $\bbE[\mu_t^\xi]$, where $\bbE$ denotes the expectation w.r.t.\ the random variables $\xi=\{\xi(x),\, x=1,\dots,N\}$, and the quenched measure $\mu_t^\xi$ is a product of $n$ Bernoulli random variables each with marginal 
\[
\mu_t^\xi(\si_i=\pm 1) = (1\pm q^\xi(i))/2.
\] 
\begin{lemma}\label{lem:rep}
For any $\mu\in\cP(\O)$, we have $\mu_t = \bbE[\mu_t^\xi]$, where 
\begin{equation}\label{eq:prod}
\mu^{\xi}_t(\si)=2^{-n}\prod_{i=1}^n (1+\si_i q^{\xi}(i)).
\end{equation}
\end{lemma}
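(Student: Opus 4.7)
The plan is a straightforward induction on $t\ge 0$, exploiting the recursion $\mu_t=\mu_{t-1}\circ\mu_{t-1}$ together with a single algebraic fact about the collision product. The base case $t=0$ reduces to observing that, with $N=1$ and $\xi(1)\sim\mu$, each factor $1+\si_i\xi_i(1)$ equals $0$ or $2$, so the quenched law $\mu^\xi_0(\si)=2^{-n}\prod_i(1+\si_i\xi_i(1))$ is the indicator $\ind_{\si=\xi(1)}$; taking expectation over $\xi(1)$ recovers $\mu_0=\mu$.

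The inductive step rests on the following computation: if $\nu^L(\si)=2^{-n}\prod_i(1+\si_i a_i)$ and $\nu^R(\si)=2^{-n}\prod_i(1+\si_i b_i)$ are two product measures on $\O$, then their collision product \eqref{onestep} is again a product measure, obtained by averaging the biases. Indeed, $\nu^L_A(\si_A)=\prod_{i\in A}(1+\si_i a_i)/2$ and analogously for $\nu^R_{A^c}$, so using the distributive identity $\sum_{A\subset[n]}\prod_{i\in A}x_i\prod_{i\notin A}y_i=\prod_i(x_i+y_i)$ one gets
\[
(\nu^L\circ\nu^R)(\si)\;=\;2^{-2n}\sum_{A\subset[n]}\prod_{i\in A}(1+\si_i a_i)\prod_{i\notin A}(1+\si_i b_i)\;=\;2^{-n}\prod_{i=1}^n\Big(1+\si_i\,\tfrac{a_i+b_i}{2}\Big).
\]

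To close the induction, I would view the binary tree of depth $t$ as a root attached to two independent subtrees of depth $t-1$, fed by the halves $\xi^L=(\xi(1),\dots,\xi(N/2))$ and $\xi^R=(\xi(N/2+1),\dots,\xi(N))$. By the inductive hypothesis, the conditional laws at the two children of the root, given $\xi^L$ and $\xi^R$ respectively, are the product measures with biases $q^{\xi^L}(i)=\frac{2}{N}\sum_{x\le N/2}\xi_i(x)$ and $q^{\xi^R}(i)=\frac{2}{N}\sum_{x>N/2}\xi_i(x)$. Since $\mu_t=\mu_{t-1}\circ\mu_{t-1}$, applying the previous identity with $a_i=q^{\xi^L}(i)$ and $b_i=q^{\xi^R}(i)$ gives the quenched law at the root: a product of Bernoullis with biases $\tfrac{1}{2}(q^{\xi^L}(i)+q^{\xi^R}(i))=q^\xi(i)$, which is exactly \eqref{eq:prod}. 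Averaging over $\xi$ yields $\mu_t=\bbE[\mu^\xi_t]$.

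There is no substantive obstacle here: the lemma merely formalizes the graphical picture given before its statement, and the entire argument reduces to the elementary distributive identity in the second paragraph together with the independence of the inputs to the two subtrees. The only point worth being a bit careful about is keeping the normalization $2^{-n}$ straight when combining the factor from the definition \eqref{onestep} with the marginalization factors of $\nu^L_A$ and $\nu^R_{A^c}$.
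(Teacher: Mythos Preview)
Your proof is correct and takes a somewhat different, more algebraic route than the paper. The paper first establishes the combinatorial identity
\[
\mu_t=\frac{1}{N^n}\sumtwo{(A_1,\dots,A_N):}{\sqcup A_x=[n]}\mu_{A_1}\otimes\cdots\otimes\mu_{A_N}
\]
by induction, then realizes the uniform ordered partition by assigning to each coordinate $i\in[n]$ an independent uniform leaf $U_i\in[N]$, and finally uses Fubini to swap the $\xi$- and $U$-expectations and read off the quenched law as the distribution of $(\xi_i(U_i))_{i=1}^n$ given $\xi$. Your induction bypasses the partition formula entirely: the engine is the closed-form identity showing that the collision product of two product Bernoulli laws is again a product law with averaged biases. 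One step you use implicitly is the bilinearity of $\circ$, which is what lets you pass from $\mu_{t-1}\circ\mu_{t-1}=\bbE[\mu_{t-1}^{\xi^L}]\circ\bbE[\mu_{t-1}^{\xi^R}]$ to $\bbE\big[\mu_{t-1}^{\xi^L}\circ\mu_{t-1}^{\xi^R}\big]$; this is immediate from the definition of $\circ$ and the independence of $\xi^L,\xi^R$, but it is worth saying aloud. The paper's route has the advantage of making the fragmentation picture (the random partition $(A_x)$ and the sampling rule $\si_i=\xi_i(U_i)$) explicit, which feeds directly into the bound \eqref{eq:ub0} and into the continuous-time analogue in Section~\ref{sec:graph_repr_ct}; your route is shorter and more self-contained if one only cares about the formula \eqref{eq:prod}.
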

\begin{proof}
We start by observing that for every $t\in\bbN$, $\mu\in\cP(\O)$, 
\begin{equation}\label{identity}
\mu_t=\frac{1}{N^n}\sumtwo{(A_1,\dots, A_{N}):}{\sqcup A_i=[n]} \mu_{A_1}\otimes \mu_{A_2}\otimes \dots \otimes \mu_{A_{N}},
\end{equation}
where the sum is taken over all partitions of $[n]$ into $N=2^t$ disjoint  sets (which are allowed to be empty).
The case $t=1$ is just the definition of uniform recombination \eqref{onestep} and the case $t\ge 2$ follows by induction since if $(A_1,\dots, A_{N})$, $(A'_1,\dots, A'_{N})$ are two independent uniformly random partitions of $[n]$ into $N$ disjoint sets, and $A\subset [n]$ is taken independently and uniformly at random, then 
\begin{equation}\label{eq:fragsets}
(A_1\cap  A,\dots, A_{N}\cap A, A'_1\cap A^c,\dots, A'_{N}\cap A^c)
\end{equation}
yields  a uniformly random partition of $[n]$ into $2N$ disjoint sets.

A way  to sample such a partition uniformly at random amongst the $N^{n}$ possibilities is to assign  to each $i\in [n]$ independently a uniformly random number $U_i$ in $[N]=\{1,\dots,N\}$ and define
\begin{equation}\label{eq:setsai}
A_x:= \{ i\in [n] \ : U_i=x\},\qquad x=1,\dots,N.
\end{equation}
We denote the associated probability by $P$, and write $E$ for the corresponding expectation.
With this definition, \eqref{identity} becomes
\begin{equation}
 \mu_t= E\left[\mu_{A_1}\otimes \mu_{A_2}\otimes \dots \otimes \mu_{A_{N}} \right].
\end{equation}
Thus, if $\xi(x)=(\xi_1(x),\dots,\xi_n(x))$, $x\in[N]$, are IID with distribution $\mu$, then for any $\si\in\O$, using Fubini Theorem
\begin{equation}\begin{split}
  \mu_t(\sigma)&= \textstyle{E\left[\prod_{x=1}^N\mu_{A_x}(\sigma_{A_x})  \right]}\\
  &= E\left[ \bbP\left[\forall x \in [N],\  \forall i\in A_x,\  \xi_{i}(x)=\sigma_i \right]\right]\\
  &= \bbE\left[ P \left[ \forall x \in [N],\  \forall i\in A_x,\ \xi_{i}(x)=\sigma_i\right]\right]\\
  &=  \bbE\left[ P \left[ \forall i\in [n],  \ \xi_i(U_i)=\sigma_i\right]\right].
\end{split}
  \end{equation}
  Therefore, $\mu_t=\bbE[\mu^\xi_t]$, 
 where the quenched probability  $\mu^{\xi}_t$ coincides with the distribution of $(\xi_i(U_i))^n_{i=1}$ given $\xi$. Since, conditionally on $\xi$,  the $\xi_i(U_i)$, $i\in[n]$, are independent and $\xi_i(U_i) = \pm1$ with probability $(1\pm q^\xi(i))/2$, this proves \eqref{eq:prod}.
\end{proof}
The above lemma can be summarized as follows. If each leaf $x\in[N]$ is assigned the configuration $\xi(x)$, where the $(\xi(x))$ are IID with law $\mu$ and each $i\in[n]$ is assigned the leaf $U_i$,  where the $(U_i)$ are IID uniform in $[N]$, then the configuration at the root  given by $\si=(\xi_1(U_1),\dots,\xi_n(U_n))$ has law $\mu_t$. Conditionally on the variables $(U_i)$ the vector $\si$ has the law $\mu_{A_1}\otimes \dots \otimes \mu_{A_{N}}$, where the subsets $A_x$ are given in \eqref{eq:setsai}.  The partition $(A_x)$ can be seen as the result of a fragmentation of the set $[n]$ at time $t$. More precisely, we define a discrete time fragmentation process $\cA_t$ such that $\cA_0 = [n]$, and if $\cA_t = (A_x)^{2^t}_{x=1}$, then $\cA_{t+1}=(A'_y)^{2^{t+1}}_{y=1}$ is obtained by replacing each  $A_x$ by the two sets $A_x\cap A$ and $A_x\cap A^c$ , where $A$ is a uniformly random subset of $[n]$. Note that at each time $t$ this gives a uniformly random partition of $[n]$ into $N=2^t$ subsets. We may define the  {\em fragmentation time} $\t_{\rm frag}$ as the first time $t$ such that all subsets in $\cA_t$ are either empty or a singleton. This coincides with the first time such that for all indexes $1\le i<\ell\le n$, one has  $i$ and $\ell$ in different subsets of the partition. 
By the above construction one sees that conditionally on $\{\t_{\rm frag}\le t\}$, the random vector $\si$ at the root at time $t$ is a product of independent bits, and is thus distributed according to $\pi_\mu$. It follows that
   \begin{equation}\label{eq:ub0}
 \|\mu_t-\pi_\mu\|_{\rm TV}\le \bbP(\t_{\rm frag}> t)\le \tfrac12n(n-1)
 2^{-t}\,,
 \end{equation}
 where we use a union bound over pairs $1\le i<\ell\le n$ and the fact that for a given such pair, the probability to have $i$ and $\ell$ in the same set of the partition $\cA_t$ at time $t$ is exactly $2^{-t}$.
Moreover, it is not hard to see that $\t_{\rm frag}$ is actually concentrated around $2\log_2n$, that is the function $t\mapsto \bbP(\t_{\rm frag}> t)$ has a cutoff behavior at $t=2\log_2n$ with an $O(1)$ window. 

\subsection{Upper bound for discrete time}

We provide now a better strategy, which allows us to prove the upper bound \eqref{ubound} on the TV distance. This improves the bound given in \eqref{eq:ub0}  by a factor $(n-1)/2$, thus achieving the optimal bound $\log_2n+O(1)$ for the cutoff time. The next lemma establishes the upper bound in Theorem \ref{th:discrete}.

\begin{lemma}\label{lem:ubounds}
For every $n,t\in\bbN$,
for all $\mu\in\cB_n$,
 \begin{gather}
 \|\mu_t-\pi\|_{\rm TV}\le n 2^{-t}\;.
 \label{eq:rootbd}
 \end{gather}

 \end{lemma}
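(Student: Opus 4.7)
The plan is to bound the total variation distance by the chi-squared distance via Cauchy--Schwarz, and to compute the latter using the quenched product representation of Lemma \ref{lem:rep}. One writes
\[
\|\mu_t - \pi\|_{\rm TV} = \tfrac12 \bigl\|\tfrac{\mu_t}{\pi} - 1\bigr\|_{L^1(\pi)} \leq \tfrac12 \sqrt{\chi^2(\mu_t,\pi)},
\]
which is exactly the bound realized by the shifted-$L^2$ argument sketched in Section \ref{ideas}: choosing $g^\xi(\sigma) = -\sum_{S \neq \emptyset}\sigma^S(q^\xi(S)-\bbE[q^\xi(S)])$ makes $\hat h_t^\xi := h_t^\xi + g^\xi$ deterministic, and its squared $L^2$-norm is precisely $\chi^2(\mu_t,\pi)$.

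Next I would expand $\chi^2(\mu_t,\pi) = \sum_{\emptyset \neq S \subseteq [n]} \hat\mu_t(S)^2$ in the Walsh basis, with $\hat\mu_t(S) := \bbE_{\mu_t}[\sigma^S] = \bbE[\prod_{i \in S} q^\xi(i)]$. The balanced hypothesis $\bbE_\mu[\sigma_i]=0$ together with the independence of the leaves $\xi(1),\dots,\xi(N)$ immediately forces $\hat\mu_t(S)=0$ for $|S|$ odd. For $|S|=2k$ with $k \geq 1$, I would prove by induction on $k$ the sharp decay estimate $|\hat\mu_t(S)| \leq (2k-1)!! \cdot 2^{-kt}$, using the semigroup recursion
\[
\hat\mu_{t+1}(S) \;=\; 2^{-|S|} \sum_{R \subseteq S}\hat\mu_t(R)\hat\mu_t(S\setminus R)
\]
(derived from \eqref{onestep}) together with the pair-partition identity $\sum_{j=0}^{k}\binom{2k}{2j}(2j-1)!!(2k-2j-1)!! = 2^k(2k-1)!!$, which is what makes the ansatz close. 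Summing and invoking $\sum_{k\geq 0}\binom{2k}{k}z^k/4^k = (1-z)^{-1/2}$ then yields
\[
\chi^2(\mu_t,\pi) \;\leq\; (1-s^2)^{-1/2} - 1, \qquad s := n 2^{-t}.
\]

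Plugging back, $\|\mu_t-\pi\|_{\rm TV} \leq \tfrac12\sqrt{(1-s^2)^{-1/2}-1}$, and an elementary scalar check shows this quantity is $\leq s$ once $s$ is bounded away from $1$; the bound is trivial for $s \geq 1$. For the residual window of $s$ just below $1$ I would complement with a Pinsker-type bound $\|\mu_t-\pi\|_{\rm TV} \leq \sqrt{D(\mu_t\|\pi)/2}$ combined with the convexity inequality $D(\mu_t\|\pi) \leq \bbE[D(\mu_t^\xi\|\pi)]$ and the explicit Bernoulli factorization of $\mu_t^\xi$, which yields $D(\mu_t\|\pi) = O(s)$. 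The main technical obstacle is the inductive step for the moment bound on $\hat\mu_t(S)$: the pair-partition identity is doing the essential combinatorial work, and once the bound is established the summation and the final scalar comparison are both routine.
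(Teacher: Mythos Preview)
Your approach is genuinely different from the paper's, and it contains a real gap.

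\medskip

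\textbf{The gap.} Your claim that $\hat\mu_t(S)=0$ for all odd $|S|$ is false for general balanced $\mu\in\cB_n$. Balancedness only forces the \emph{degree-one} coefficients to vanish. A counter-example: with $n=3$, let $\mu$ be uniform on $\{\sigma:\sigma_1\sigma_2\sigma_3=+1\}$. This is balanced, yet $\hat\mu(\{1,2,3\})=1$, and the recursion you wrote (using $\hat\mu_t(\{i\})=0$) gives $\hat\mu_{t+1}(\{1,2,3\})=\tfrac14\hat\mu_t(\{1,2,3\})$, so $\hat\mu_t(\{1,2,3\})=4^{-t}\neq 0$ for all $t$. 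Since your induction on the recursion relies on dropping the odd-$R$ terms, it does not close as written. The approach is salvageable: expanding $\hat\mu_t(S)=\bbE\big[\prod_{i\in S}q^\xi(i)\big]$ directly over leaf assignments (each leaf must receive at least two indices) gives $|\hat\mu_t(S)|\le C_m\,2^{-\lceil m/2\rceil t}$ for $|S|=m$, and the odd-$m$ contributions to $\chi^2$ are then lower order. But this is more work than you indicate, and the endgame patch via Pinsker would also need the constants checked carefully to actually land on $s$ rather than $\sqrt{Cs}$.

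\medskip

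\textbf{The paper's route.} The paper does \emph{not} compute the annealed $\chi^2$. It stays quenched and uses a much milder shift, subtracting only the degree-one part: with $\hat h^\xi_t=h^\xi_t-\sum_i\sigma_i q^\xi(i)$ one has
\[
\|\hat h^\xi_t-1\|_{L^2(\pi)}^2=\prod_i\bigl(1+q^\xi(i)^2\bigr)-\sum_i q^\xi(i)^2-1\;\le\;e^{\langle q^\xi,q^\xi\rangle}-\langle q^\xi,q^\xi\rangle-1.
\]
Taking the minimum of this $L^2$ estimate with the trivial $L^1$ bound $2+\langle q^\xi,q^\xi\rangle^{1/2}$ and observing that the minimum is always $\le 2\langle q^\xi,q^\xi\rangle$, one gets
\[
\|\mu_t-\pi\|_{\rm TV}\le\bbE\big[\langle q^\xi,q^\xi\rangle\big]=n\,\bbE\big[q^\xi(1)^2\big]=n2^{-t}
\]
in one stroke, with no Fourier combinatorics, no case split on $s$, and no issue with odd correlations. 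What the paper's choice buys is that the quenched density is a product, so the $L^2$ computation is a one-liner; what your choice of $g^\xi$ buys is a deterministic $\hat h^\xi_t$, but at the cost of having to control all annealed Fourier coefficients.
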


\begin{proof}
From Lemma \ref{lem:rep} we have $\mu_t=\bbE[\mu_t^\xi]$, where $\mu_t^\xi\in\cP(\O)$ has density 
\begin{equation}
h^\xi_t(\si)=\frac{\dd\mu^{\xi}_t}{\dd\pi}(\si)=\prod_{i=1}^n (1+\si_iq^{\xi}(i)).
\end{equation}
Using $\bbE\left[q^{\xi}(i)\right]=0$ for all $i\in[n]$, we obtain
\begin{align}
  \| \mu_t-\pi \|_{\rm  TV}& =\tfrac12\,\| \bbE\, (h^\xi_t-1) \|_{L^1(\pi)}\nonumber 
  \\&= \tfrac12\,\textstyle{\| \bbE\, (h^\xi_t-\sum^n_{i=1}\si_i q^{\xi}(i)-1) \|_{L^1(\pi)}}\nonumber 
  \\ & \leq \tfrac12\,\textstyle{\bbE\, \|  h^\xi_t-\sum^n_{i=1}\si_i q^{\xi}(i)-1 \|_{L^1(\pi)}}
  \label{ubidea11}\,.
\end{align}
On the one hand, using at the last line that $\si_i$ are IID centered r.v.~under $\pi$
\begin{align}
	&\textstyle{\|  h^\xi_t-\sum^n_{i=1}\si_i q^{\xi}(i)-1 \|_{L^1(\pi)}}\\
	&\qquad \leq \int h^\xi_t d\pi + 1 + \int \textstyle{\left|  \sum^n_{i=1}\si_i q^{\xi}(i) \right| d\pi}\\
	&\qquad \leq 2+ \left(\int\textstyle{\left(\sum_{i=1}^n \si_i q^{\xi}(i)\right)^2\dd }\pi  \right)^{1/2}
	= 
	2 +  \textstyle{\langle q^\xi,q^\xi\rangle^{1/2}}\;,
\label{ubidea101}
\end{align}
where we use the notation $\langle x,y\rangle=\sum_{i=1}^nx(i)y(i)$. On the other hand,
writing $\hat h^\xi_t=h^\xi_t-\sum^n_{i=1}\si_i q^{\xi}(i)$, we estimate
\begin{align}
		\textstyle{\|  h^\xi_t-\sum^n_{i=1}\si_i q^{\xi}(i)-1 \|^2_{L^1(\pi)}} &\leq\| \hat h^\xi_t-1 \|^2_{L^2(\pi)}
=\int\left(\hat h^\xi_t\right)^2\!\dd\pi \,-\,1\\
		&= \prod_{i=1}^n \left(1+q^{\xi}(i)^2\right) - \sum^n_{i=1} q^{\xi}(i)^2-1\\
		&\le e^{\langle q^{\xi},q^{\xi}\rangle }- \langle q^{\xi},q^{\xi}\rangle -1\;.
\label{eq:ubao1}
\end{align}
Using the inequality
\begin{equation}
	\min\left\{ \left(e^{\langle q^{\xi},q^{\xi}\rangle}-\langle q^{\xi},q^{\xi}\rangle-1\right)^{1/2},2+\langle q^{\xi},q^{\xi}\rangle^{1/2} \right\}
	\le 2 \langle q^{\xi},q^{\xi}\rangle\;,
\end{equation}
we have thus shown that
\begin{equation}\label{eq:qxib}
  \| \mu_t-\pi \|_{\rm  TV} \le \bbE\left[\langle q^{\xi},q^{\xi}\rangle\right] = n \bbE\left[(q^\xi(1))^2\right] = n2^{-t}\,
  .\end{equation}
\end{proof}

The previous result establishes an upper bound on $D_n(t)$ which, for large $t$ or equivalently for small $s=n2^{-t}$, is asymptotically sharp up to constant (cf.\ the discussion in Section \ref{ideas} or Section \ref{hellingersection} below). 
However it does not yield any information about the case $t$ small or equivalently $s$ large.
The following lemma fills this gap by providing a bound, which captures the decay of the distance to equilibrium at times $t=\log_2n - K$ for large constants $K$. It is in a sense quite close to the asymptotic lower bound \eqref{lbound} which we will obtain in Section \ref{hellingersection}, as the two only differ by the constant that appears in the exponential.

\begin{lemma}\label{lem:ubounds2}
For every $n,t\in\bbN$, setting $s:= n 2^{-t}$, and for all $\mu\in\cB_n$, 
\begin{equation}
 \label{unilow}
 \|\mu_t-\pi\|_{\rm TV}\le 1- \tfrac12\,e^{-2s}
 ,\qquad s\ge \log(2)/2.
 \end{equation}
 \end{lemma}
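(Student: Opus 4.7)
The plan is to lower bound the overlap $1-\|\mu_t-\pi\|_{\rm TV}$ via the quenched representation $\mu_t=\bbE[\mu^\xi_t]$ from Lemma~\ref{lem:rep}. Since the map $\nu\mapsto\|\nu-\pi\|_{\rm TV}$ is convex, Jensen's inequality yields
\begin{equation}
1-\|\mu_t-\pi\|_{\rm TV}\;\ge\;\bbE\bigl[1-\|\mu^\xi_t-\pi\|_{\rm TV}\bigr],
\end{equation}
so it suffices to bound the quenched overlap from below by $\tfrac12 e^{-2s}$ in expectation. The upper bounds on the $L^1$ or $L^2$ norm exploited in Lemma~\ref{lem:ubounds} are too crude here because we are aiming at regime where $\|\mu_t-\pi\|_{\rm TV}$ is close to $1$; instead we shift to a Hellinger-type estimate, which is ideally suited to the product structure of both $\mu^\xi_t$ and $\pi$.

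For each realization of $\xi$, both $\mu^\xi_t$ and $\pi$ are products of Bernoullis, with biases $q^\xi(i)$ and $0$ respectively. I would use the Bhattacharyya coefficient $\rho(\mu,\nu):=\sum_\sigma\sqrt{\mu(\sigma)\nu(\sigma)}$ and the classical Le~Cam inequality $\|\mu-\nu\|_{\rm TV}^2\le 1-\rho(\mu,\nu)^2$ to deduce $1-\|\mu^\xi_t-\pi\|_{\rm TV}\ge\tfrac12\rho(\mu^\xi_t,\pi)^2$. Since $\rho$ factorizes over coordinates, a direct computation gives
\begin{equation}
\rho(\mu^\xi_t,\pi)^2=\prod_{i=1}^n\frac{1+\sqrt{1-q^\xi(i)^2}}{2}\;\ge\;\exp\bigl(-\langle q^\xi,q^\xi\rangle\bigr),
\end{equation}
where the last inequality follows from the pointwise bound $\tfrac{1+\sqrt{1-a^2}}{2}\ge 1-\tfrac{a^2}{2}\ge e^{-a^2}$, valid for $|a|\le 1$.

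A second application of Jensen to the convex function $x\mapsto e^{-x}$, combined with the computation $\bbE[\langle q^\xi,q^\xi\rangle]=n\cdot 2^{-t}=s$ (using $\bbE[\xi_i(x)]=0$ and $\bbE[\xi_i(x)^2]=1$ for $\mu\in\cB_n$), then yields
\begin{equation}
1-\|\mu_t-\pi\|_{\rm TV}\;\ge\;\tfrac12\,\bbE\bigl[e^{-\langle q^\xi,q^\xi\rangle}\bigr]\;\ge\;\tfrac12\,e^{-s}\;\ge\;\tfrac12\,e^{-2s},
\end{equation}
which matches the claimed bound (in fact with a slightly sharper constant $e^{-s}$). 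No step is really an obstacle: the only place requiring care is to make sure that the two Jensen applications both point in the favorable direction, which is the case because $\|\cdot-\pi\|_{\rm TV}$ is convex (pushes expectation inside the TV norm) and $e^{-x}$ is convex (brings the expectation inside the exponential). The role of the hypothesis $s\ge\log(2)/2$ is cosmetic, merely ensuring that the right-hand side remains a useful bound.
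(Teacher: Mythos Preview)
Your proof is correct and takes a genuinely different route from the paper. The paper bounds the quenched $L^2$ norm $\|h^\xi_t-1\|_{L^2(\pi)}^2\le e^{\langle q^\xi,q^\xi\rangle}-1$, applies a refined $L^1$--$L^2$ inequality $\tfrac12\|f-1\|_{L^1(\pi)}\le\phi(\|f-1\|_{L^2(\pi)})$ with $\phi(x)=x^2/(1+x^2)$ for $x\ge1$, and then uses Markov's inequality $\bbP(\langle q^\xi,q^\xi\rangle\ge 2s)\le 1/2$ to control the quenched bound on the good event. You instead exploit the Bhattacharyya coefficient, whose factorization over product measures turns the bound into a clean exponential in $\langle q^\xi,q^\xi\rangle$; a single Jensen application then does the work of the paper's Markov step. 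Your argument is more elementary (no auxiliary lemma on $\phi$) and in fact delivers the stronger bound $\|\mu_t-\pi\|_{\rm TV}\le 1-\tfrac12 e^{-s}$, valid for all $s>0$ without the restriction $s\ge\log(2)/2$. The paper's approach, on the other hand, stays within the $L^2$ framework already set up for Lemma~\ref{lem:ubounds}, which is why it reuses that machinery rather than switching to Hellinger.
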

\begin{proof}
We proceed as in the previous proof but instead of using only Schwarz' inequality to bound the $L^1(\pi)$ norm by the $L^2(\pi)$ norm, we use the following finer inequality, whose proof is given in Appendix \ref{app:tricky}, namely
\begin{equation}\label{eq:finerineq}
\tfrac12 \,  \|f - 1\|_{L^1(\pi)} \le \phi \left(\|f - 1\|_{L^2(\pi)} \right),
\end{equation}
with
$$ \phi(x)=\begin{cases} 
x/2,  &\text{ if } x\le 1,\\
         \frac{x^2}{1+x^2} \quad &\text{ if } x\ge1.
        \end{cases}$$
Note that
\begin{align}
&\| h^\xi_t-1 \|^2_{L^2(\pi)}  =\int\left( h^\xi_t\right)^2\!\dd\pi \,-\,1 = \prod_{i=1}^n \left(1+q^{\xi}(i)^2\right) -1
  \le e^{\langle q^{\xi},q^{\xi}\rangle } -1.
\end{align}
Using monotonicity of $\phi$ and \eqref{eq:finerineq}, 
\begin{equation}
  \|\mu_t-\pi\|_\tv \le \bbE\left[ \phi\left( \sqrt{e^{\langle q^{\xi},q^{\xi}\rangle}-1}\,\right)\right].
\end{equation}
Moreover,
 $\bbE\left[\langle q^{\xi},q^{\xi}\rangle \right]= n 2^{-t}=s$,
 and therefore,  by Markov's inequality,
 \[
 \bbP\left(\langle q^{\xi},q^{\xi}\rangle\ge 2s \right)\le \frac12\,.
 \] 
  Since $\phi$ is non-decreasing and bounded by $1$, and provided $2s\ge \log 2$, we obtain
\begin{align*}
  \|\mu_t-\pi\|_\tv &\le \bbE\left[\mathbf{1}_{\langle q^{\xi},q^{\xi}\rangle\ge 2s} + (1-\mathbf{1}_{\langle q^{\xi},q^{\xi}\rangle\ge 2s}) \phi\left(\sqrt{e^{2s}-1}\,\right)\right]\\
  &\le \bbP\big(\langle q^{\xi},q^{\xi}\rangle\ge 2s \big) + \big(1-\bbP\big(\langle q^{\xi},q^{\xi}\rangle\ge 2s \big)\big) (1-e^{-2s})\\
  &\le  1- \tfrac12\,e^{-2s}.
\end{align*}

\end{proof}

\subsection{Explicit profile for monochromatic initial states}\label{sec:monodisc}
Here we prove Theorem \ref{monocase}. We take the monochromatic distribution $ \mu =  \tfrac12\,\ind_{-} + \tfrac12\,\ind_{+}$ as initial state. Using the notation from Lemma \ref{lem:rep}, 
we note that now $q^{\xi}(i)$ does not depend on $i$. We let $\bar q^{\xi}$ denote this common value, and write $ \rho_t$ for the probability density
\begin{equation}
 \rho_t(\si)=\frac{\dd \mu_t}{\dd \pi}(\si)=\bbE\left[h^\xi_t\right] = \bbE\left[ \prod_{i=1}^n (1+\si_i \bar q^{\xi})\right].
\end{equation}
\begin{proposition}\label{maincase}
 If $t_n\in\bbN$ is such that  $\lim_{n\to \infty} n2^{-t_n}=s$ for some $s>0$, then for any bounded continuous function 
 $F:\bbR_+ \to\bbR$,
\begin{equation} \label{inlaw}
\lim_{n\to \infty}  \int F(\rho_{t_n}(\si))\pi(\dd \si)= \int \frac{e^{-\frac{z^2}{2}}}{\sqrt{2\pi}}\,F\left( \gamma_s(z) \right)  \dd z,
\end{equation}
with
\begin{equation} \label{eq:gamma_s}
\gamma_s(z):=\frac{e^{\frac{sz^2}{2(s+1)}}}{\sqrt{s+1}}\;,\quad z\in\R\;.
\end{equation}
\end{proposition}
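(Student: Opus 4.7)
\textbf{Step 1 (closed form).} The strategy is to exploit the monochromatic symmetry to write $h^\xi_t$ in closed form, Taylor-expand in the small parameter $\bar q^\xi = O(N^{-1/2})$, and identify the limit as the Gaussian integral $\gamma_s(g)$. For $\mu = \tfrac12\ind_- + \tfrac12\ind_+$, each leaf $\xi(x)$ equals $\pm \ind$ uniformly, so setting $\eta_x := \xi_1(x)\in\{\pm 1\}$ one has $\xi_i(x)=\eta_x$ for every $i$ and $q^\xi(i) = N^{-1}\sum_{x=1}^N \eta_x =:\bar q^\xi$ is independent of $i$. By the classical CLT, $Y_N := \sqrt{N}\,\bar q^\xi$ converges in distribution to $Z\sim\cN(0,1)$. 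Using $\sigma_i^2=1$, Lemma \ref{lem:rep} yields
$$h^\xi_t(\sigma) \,=\, \prod_{i=1}^n(1+\sigma_i \bar q^\xi) \,=\, (1+\bar q^\xi)^k(1-\bar q^\xi)^{n-k},\qquad k:= k(\sigma) = \#\{i:\sigma_i=+1\},$$
so that $\rho_{t_n}(\sigma) = \bbE_\xi[h^\xi_{t_n}(\sigma)]$ depends on $\sigma$ only through $G_n(\sigma) := (2k-n)/\sqrt{n}$; under $\pi$ the CLT gives $G_n \Rightarrow G \sim \cN(0,1)$.

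\textbf{Step 2 (Taylor expansion).} Writing $m:=2k-n = \sqrt{n}\,G_n$, the estimate $|\log(1\pm y)\mp y + y^2/2| \le C|y|^3$ valid for $|y|\le 1/2$ gives, on the event $\{|Y_N|\le M\}$,
$$\log h^\xi_t(\sigma) \,=\, m\,\bar q^\xi - \tfrac{n}{2}(\bar q^\xi)^2 + O\bigl(n|\bar q^\xi|^3\bigr) \,=\, \sqrt{n/N}\,G_n Y_N - \tfrac{n}{2N}Y_N^2 + O_M(n/N^{3/2}).$$
Since $n/N\to s$ while $n/N^{3/2}\to 0$, whenever $G_n(\sigma_n)\to g$ the right-hand side converges pointwise in $Y_N$ to $\sqrt{s}\,g\,Y_N - \tfrac{s}{2}Y_N^2$.

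\textbf{Step 3 (Gaussian integration and conclusion).} Fix $g\in\R$ and a sequence $(\sigma_n)$ with $G_n(\sigma_n)\to g$. Combining Step 2 with $Y_N\Rightarrow Z$ and a dominated convergence argument (see Main obstacle below),
$$\rho_{t_n}(\sigma_n) \;\longrightarrow\; \int_\R \frac{e^{-z^2/2}}{\sqrt{2\pi}}\,\exp\bigl(\sqrt{s}\,gz - \tfrac{s}{2}z^2\bigr)\,dz \,=\, \frac{e^{sg^2/(2(1+s))}}{\sqrt{1+s}} \,=\, \gamma_s(g),$$
by completing the square. This limit is locally uniform in $g$, so by continuous mapping applied to the tight sequence $G_n\Rightarrow G$ together with continuity of $\gamma_s$, $\rho_{t_n}$ converges in distribution to $\gamma_s(G)$ under $\pi$; since $F$ is bounded continuous, this is exactly \eqref{inlaw}.

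\textbf{Main obstacle.} The delicate point is the uniform control of $h^\xi_t$ needed to pass to the limit under $\bbE_\xi$. Optimising in $q\in[-1,1]$ yields the identity
$$\sup_{q\in[-1,1]}(1+q)^k(1-q)^{n-k} \,=\, e^{n I(m/n)},\qquad I(u):= \tfrac{1+u}{2}\log(1+u)+\tfrac{1-u}{2}\log(1-u),$$
which for $|G_n|\le M_0$ is merely $e^{O(1)}$ since $I(u)=u^2/2+O(u^4)$. Combined with the Gaussian tail $\bbP(|Y_N|\ge M)\le 2e^{-M^2/2}$, this envelope shows $\bbE_\xi\bigl[h^\xi_{t_n}(\sigma_n)\,\ind_{\{|Y_N|>M\}}\bigr]\to 0$ as $M\to\infty$ uniformly in $n$, reducing matters to the bounded regime $\{|Y_N|\le M\}$ where Step 2 and bounded convergence close the proof.
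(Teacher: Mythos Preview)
Your proof is correct and follows essentially the same route as the paper's. Both arguments exploit the monochromatic symmetry to write $h^\xi_t(\sigma)$ as a function of the pair $(\bar q^\xi,\bar\sigma_n)$, Taylor-expand the logarithm to identify the limiting exponent $\sqrt{s}\,gZ-\tfrac{s}{2}Z^2$, and complete the square to obtain $\gamma_s(g)$; the paper packages this via the variables $(\alpha_n^\xi,\beta_n^\xi)$ and a separate Lemma on uniform convergence, while you do the expansion directly, but your envelope bound $\sup_q(1+q)^k(1-q)^{n-k}=e^{nI(m/n)}$ is exactly the paper's bound on $\|g_n(a,\cdot)\|_\infty$ (maximized at $\bar q^\xi=a/\sqrt{n}$), and your split $\{|Y_N|\le M\}$ versus $\{|Y_N|>M\}$ plays the same role as the paper's truncation argument.
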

The asserted convergence can be interpreted as convergence in distribution of $\rho_{t_n}$ viewed as a r.v.~on $(\Omega,\pi)$ towards $\gamma_s$ viewed as a r.v.~on $(\R,\cN(0,1))$. As a consequence, we can restrict ourselves to smoother functions $F$ in the proof (we will take Lipschitz bounded functions).
Let us first observe that Proposition \ref{maincase} implies Theorem \ref{monocase}.
\begin{proof}[Proof of Theorem \ref{monocase}]
By definition \eqref{eq:tvnormal} and since $\gamma_s$ is the relative density of $\cN(0,1+s)$ with respect to $\cN(0,1)$, one has
\[
\varphi(s)=\|\cN(0,1+s)- \cN(0,1) \|_{\tv}=\frac 1 2 \int \frac{e^{-\frac{z^2}{2}}}{\sqrt{2\pi}}\left|\gamma_s(z)-1\right| \dd z.
\]
Thus, we need to prove that for $t_n\in\bbN$ such that  $\lim_{n\to \infty} n2^{-t_n}=s$,
\begin{equation}\begin{split}\label{conseq}
 \lim_{n\to \infty}
 \int \left|\rho_{t_n}(\si)-1\right|\pi(\dd \si)
=\int \frac{e^{-\frac{z^2}{2}}}{\sqrt{2\pi}}\left|\gamma_s(z)-1\right| \dd z.
 \end{split}\end{equation}
Since the map $F:x\mapsto 1+x - |x-1|$ is bounded and continuous on $\R_+$, Proposition \ref{maincase} yields
\begin{align}
&\lim_{n\to \infty}\int \left[(1+\rho_{t_n}(\si))-|\rho_{t_n}(\si)-1|\right]\pi(\dd \si) \\
&\qquad \qquad = \int \frac{e^{-\frac{z^2}{2}} }{\sqrt{2\pi}}\left[ \left(
1+\gamma_s(z)\right) - \left|\gamma_s(z)-1\right|\right]\dd z.
\end{align}
Since $\int (1+\rho_{t_n}(\si)) \pi(\dd \si)=2= \int\frac{e^{-\frac{z^2}{2}} }{\sqrt{2\pi}}\left(1+\gamma_s(z)\right)\dd z$, \eqref{conseq} follows.
\end{proof}

\begin{proof}[Proof of Proposition \ref{maincase}]
Without loss of generality, we can assume $F$ Lipschitz and bounded. To prove \eqref{inlaw}
we are going to show that  $\rho_{t_n}(\si)$ is well approximated by
\begin{equation}\label{need1}
\gamma_s(\bar \si_n) = \frac{e^{\frac{s\,\bar \si_n^2}{2(s+1)}}}
{\sqrt{s+1}},\qquad \bar \si_n: = \frac1{\sqrt n}\sum_{i=1}^n \si_i\,.
\end{equation}
That is, we show that 
\begin{equation}\label{need2}
 \lim_{n\to \infty} \int \left|F(\rho_{t_n}(\si))-F(\gamma_s(\bar \si_n))\right|\pi(\dd \si)=0.
 \end{equation}
Once \eqref{need2} is available,  it is sufficient to prove \eqref{inlaw} with $\rho_{t_n}(\si)$ replaced by $ \gamma_s(\bar \si_n)$, that is 
\begin{equation} \label{inlawbar}
\lim_{n\to \infty}  \int F( \gamma_s(\bar \si_n))\pi(\dd \si)= \int \frac{e^{-\frac{z^2}{2}}}{\sqrt{2\pi}}\,F\left(  \gamma_s(z)\right)  \dd z. 
\end{equation}
However, since $F\circ \gamma_s$ is bounded and continuous, the above convergence follows simply from the fact that  $\bar \si_n$ converges in distribution, under $\pi$, to a standard Gaussian.
We are left with the proof of \eqref{need2}.
Observing that
\[
\prod_{i=1}^n (1+\si_i \bar q^{\xi})^2 = (1+\bar q^{\xi})^{\sqrt n{ \bar \si_n}}
(1-\bar q^{\xi})^{-\sqrt n{ \bar \si_n}} \left( 1-(\bar q^{\xi})^2 \right) ^n,
\]
one has 
\begin{equation}\label{distrybe1}
 \rho_{t_n}(\si)=\bbE\left[ e^{\alpha_n^{\xi} \bar \si_n+ \beta_n^{\xi}}\right],
\end{equation}
where
\begin{equation}\label{degalfbet}
 \alpha_n^{\xi}:=\frac{\sqrt{n}}{2}\log \left(\frac{1+\bar q^{\xi}}{1-\bar q^{\xi}} \right), \quad \beta_n^{\xi}:=\frac{n}{2}\log \left( 1-(\bar q^{\xi})^2 \right) ,
\end{equation}
with appropriate convention for the special case $\bar q^{\xi}=1$.
The key observation for the proof is the following joint convergence in distribution
\begin{equation}\label{distrybe}
 \lim_{n\to \infty}(\alpha^{\xi}_n,\beta^{\xi}_n)\stackrel{(d)}{=} \left(Z_s,-\frac{1}{2}Z_s^2\right),
\end{equation}
where $Z_s\sim \cN(0,s)$. Indeed, \eqref{distrybe} is a direct consequence of an expansion of the logarithm in \eqref{degalfbet}, using $n2^{-t_n}\to s$ and the convergence in distribution of
$$ \sqrt{2^{t_n}}\bar q^\xi = \frac1{\sqrt{2^{t_n}}} \sum_{x=1}^{2^{t_n}} \xi_1(x)$$
to a standard gaussian.

If  in \eqref{distrybe1} we replace formally the variables $\alpha_n^{\xi} ,\beta_n^{\xi}$ by their limits, then the computation of a gaussian integral gives the desired result, namely that $ \rho_{t_n}$  is well approximated by
\eqref{need1}.   At this point, to conclude the proof there are two issues one has to consider: (a) we need more than convergence in distribution to replace $\alpha^{\xi}_n$ and $\beta^{\xi}_n$ in the expectation, (b) the fact that the space in which $\si$ lives depends on $n$ makes passing to the  limit more tricky.
Given $a\in \bbR$, we set  
$$g_n(a,\bar q^\xi):=  e^{\alpha_n^{\xi} a+ \beta_n^{\xi}}.$$
Note that $\alpha_n^{\xi}$ and $\beta_n^{\xi}$ only depend on $\bar q^\xi$ so that this definition makes sense. Note the dependence on $n$ is also hidden in the fact that $\xi$ depends on $t=t_n$.
An important observation is the following
\begin{lemma}\label{unifconv}
If  $\lim_{n\to \infty} n2^{-t_n}=s$, then for all $a\in\bbR$,
\begin{equation}\label{simplconv}
 \lim_{n\to \infty}\bbE\left[ g_n(a,\bar q^\xi) \right] =\frac 1{\sqrt{s+1}}e^{\frac{sa^2}{2(s+1)}} = \gamma_s(a), 
\end{equation}
and the convergence is uniform on the interval $a\in[-A,A]$ for any $A>0$.
\end{lemma}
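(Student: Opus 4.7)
The plan is to perform a Taylor expansion of $\alpha_n^\xi$ and $\beta_n^\xi$ around $\bar q^\xi=0$, use the CLT to transport the problem to a Gaussian integral, and then upgrade pointwise convergence in $a$ to uniform convergence on $[-A,A]$ via a convexity argument.

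Set $Y_n:=\sqrt{n}\,\bar q^\xi$. Since $N\bar q^\xi=\sum_{x=1}^{N}\xi_1(x)$ is a sum of $N=2^{t_n}$ i.i.d.\ Rademacher variables and $n/N\to s$, the standard CLT gives that $Y_n$ converges in distribution to $Y\sim\cN(0,s)$. Using the expansions $\log\tfrac{1+x}{1-x}=2x+O(x^3)$ and $\log(1-x^2)=-x^2+O(x^4)$, one obtains
\begin{equation*}
\alpha_n^\xi = Y_n + O\bigl(Y_n^3/n\bigr),\qquad
\beta_n^\xi = -Y_n^2/2 + O\bigl(Y_n^4/n\bigr),
\end{equation*}
so that $g_n(a,\bar q^\xi)=F_n(Y_n)$ for a deterministic function $F_n$ with $F_n(y)\to F(y):=e^{ay-y^2/2}$ uniformly on every compact subset of $\R$, with an error of order $1/n$ that is uniform for $a\in[-A,A]$.

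The key ingredient is then a uniform bound on $g_n$. Viewing $\alpha_n^\xi,\beta_n^\xi$ as functions of $q\in(-1,1)$, a direct computation shows that $q\mapsto \alpha_n^\xi(q)a+\beta_n^\xi(q)$ has a unique critical point at $q^\star=a/\sqrt n$, and Taylor expanding there yields
\begin{equation*}
\sup_{q\in(-1,1)}\bigl[\alpha_n^\xi(q)a+\beta_n^\xi(q)\bigr]
= \tfrac{a^2}{2}+\tfrac{a^4}{12n}+O\bigl(a^6/n^2\bigr).
\end{equation*}
Consequently, for each $A>0$ there exists $n_0=n_0(A)$ and $C_A<\infty$ such that $g_n(a,\bar q^\xi)\le C_A$ for all $a\in[-A,A]$, $n\ge n_0$, almost surely in $\xi$. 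Combined with the tightness of $(Y_n)$ and the uniform-on-compacts convergence $F_n\to F$, a standard truncation argument gives, for each fixed $a$,
\begin{equation*}
\bbE\bigl[g_n(a,\bar q^\xi)\bigr]=\bbE[F_n(Y_n)]\longrightarrow \bbE[F(Y)]
=\bbE\bigl[e^{aY-Y^2/2}\bigr]=\tfrac{1}{\sqrt{s+1}}\,e^{\frac{sa^2}{2(s+1)}}=\gamma_s(a),
\end{equation*}
the last equality being a direct Gaussian integral. To upgrade pointwise convergence to uniformity on $[-A,A]$, we note that $a\mapsto g_n(a,\bar q^\xi)=e^{\alpha_n^\xi a+\beta_n^\xi}$ is log-affine, so by H\"older's inequality $a\mapsto \bbE[g_n(a,\bar q^\xi)]$ is log-convex, in particular convex; since $\gamma_s$ is convex too, pointwise convergence of convex functions on the open line implies uniform convergence on every compact subset.

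The main obstacle is the uniform bound in the second step. A priori $\alpha_n^\xi$ diverges to $\pm\infty$ when $|\bar q^\xi|$ approaches $1$, and without a uniform control on $g_n$ one cannot interchange limit and expectation. The optimization producing the bound $\sup_q[\alpha_n^\xi(q)a+\beta_n^\xi(q)]=\tfrac{a^2}{2}+o(1)$ captures precisely the right cancellation between the exploding $\alpha_n^\xi a$ term and the strongly negative $\beta_n^\xi$ term, and is what makes the bounded-convergence passage legitimate.
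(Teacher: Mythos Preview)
Your argument is correct and tracks the paper's proof closely for the pointwise convergence: both obtain the same uniform bound by optimizing $q\mapsto\alpha_n^\xi(q)a+\beta_n^\xi(q)$ at $q^\star=a/\sqrt{n}$, and both use this bound together with the convergence in law of (essentially) $(\alpha_n^\xi,\beta_n^\xi)$ to pass to the limit $\bbE[e^{aZ_s-\frac12 Z_s^2}]=\gamma_s(a)$.

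The one genuine difference is in the upgrade from pointwise to uniform convergence on $[-A,A]$. The paper bounds $|\partial_a g_n(a,\bar q^\xi)|\le\max\{g_n(a+1,\bar q^\xi),g_n(a-1,\bar q^\xi)\}$, obtaining equicontinuity of $a\mapsto\bbE[g_n(a,\bar q^\xi)]$ and hence uniform convergence. Your route via log-convexity of $a\mapsto\bbE[e^{\alpha_n^\xi a+\beta_n^\xi}]$ (H\"older) and the classical fact that pointwise convergence of convex functions on $\R$ implies local uniform convergence is slightly slicker: it avoids the derivative estimate entirely and uses only structure that is immediate from the exponential form of $g_n$. Both arguments are short; the paper's has the mild advantage of recycling the same sup-bound machinery, while yours exploits a soft property (convexity) that requires no further computation.
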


The proof of Lemma \ref{unifconv} will be given below. We can now conclude the proof of \eqref{need2}. To that end, it suffices to show that for any given $A>0$
\begin{equation}\label{Eq:monotoprove1}
	\limsup_{n\to\infty} \int  \Big\vert F(\rho_{t_n}(\si)) - F(\gamma_s(\bar \si_n)) \Big\vert \mathbf{1}_{\{\vert \bar \si_n\vert \le A\}}\pi(\dd \si) = 0\;,
\end{equation}
and that
\begin{equation}\label{Eq:monotoprove2}
	\lim_{A\to\infty} \limsup_{n\to\infty} \int  \Big\vert F(\rho_{t_n}(\si)) -  F(\gamma_s(\bar \si_n)) \Big\vert \mathbf{1}_{\{\vert \bar \si_n\vert > A\}}\pi(\dd \si) = 0\;.
\end{equation}
We start with the latter. By Hoeffding's inequality
$$ \pi(\{\vert \bar \si_n\vert > A\}) \le 2 e^{-A^2/2}\;,\quad A>0\;.$$
Since $F$ is bounded, we easily deduce \eqref{Eq:monotoprove2}.
On the other hand, since $\rho_{t_n}(\si)$ coincides with $\bbE\left[ g_n(a,\xi) \right]$ evaluated at $a=\bar\si_n$, \eqref{Eq:monotoprove1} follows from Lemma \ref{unifconv} and the fact that $F$ is Lipschitz.
\end{proof}

\begin{proof}[Proof of Lemma \ref{unifconv}]
We claim that for any $A>0$, 
\begin{equation}\label{eq:maxb}
 \sup_{|a|\le A}\limsup_{n\ge 1}\max\left\{ \|g_n(a,\cdot) \|_{\infty}, \|\partial_a g_n(a,\cdot) \|_{\infty} \right\}<\infty.
 \end{equation}
Indeed, computing the derivative in $\bar q^\xi$ of $g_n(a,\bar q^\xi)$ shows that it is maximized at $\bar q^\xi =a/\sqrt{n}$ and from this, we deduce the bound on $\|g_n(a,\cdot) \|_{\infty}$. Regarding the bound on $\partial_a g_n(a,\cdot)$, we observe that $|\partial_a g_n(a,\bar q^\xi)|\le \max\{ g_n(a+1,\bar q^\xi) ; g_n(a-1,\bar q^\xi)\}$.\\

As a consequence of the bound \eqref{eq:maxb}, we deduce that for $a\in [-A,A]$, the map $g_n(a,\cdot)$ coincides with $g_n(a,\cdot) \wedge M$ for some positive constant $M$. The latter is a continuous bounded function of $\alpha_n^{\xi}$ and $\beta_n^{\xi}$, so the convergence in law \eqref{distrybe} implies that 
\begin{equation}\label{Eq:CVgn}
\lim_{n\to \infty}\bbE\left[ g_n(a,\bar q^\xi) \right] = \bbE\left[ e^{aZ_s-\frac{1}{2}Z_s^2}\right]=\frac1{\sqrt{s+1}}e^{\frac{s\,a^2}{2(s+1)}}.
\end{equation}
The bound on the derivative in $a$ of $g_n(a,\bar q^\xi)$ proven above suffices to deduce that $a\mapsto \bbE\left[ g_n(a,\bar q^\xi) \right]$ is equicontinuous on $[-A,A]$. From this property, we deduce that the convergence \eqref{Eq:CVgn} holds uniformly. 
\end{proof}

We conclude with some remarks on the 
asymptotic behavior  in the regime where $n2^{-t}\to\infty$. For instance, when $t\in\bbN$ is fixed one has the following behavior.

\begin{lemma}\label{sidecase1}
 If $t\in\bbN$ is fixed, and $\mu=\frac{1}{2}\ind_+ +\frac{1}{2}\ind_-$, then
  \begin{equation}\label{eq:nomo}
   \lim_{n\to \infty} \|\mu_{t}-\pi\|_\tv= 1 - \binom{2^t}{2^{t-1}}\,2^{-2^t}.
\end{equation}
\end{lemma}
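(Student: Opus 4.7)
My plan is to use the graphical representation of Lemma~\ref{lem:rep} and exploit a special feature of the monochromatic initial condition: the quenched measure $\mu_t^\xi$ actually coincides with $\pi$ on an event whose probability is precisely the binomial coefficient appearing in \eqref{eq:nomo}. Concretely, for $\mu=\tfrac12\ind_-+\tfrac12\ind_+$ the $N=2^t$ leaf variables $\xi(x)$ are of the form $\epsilon(x)(1,\dots,1)$ with i.i.d.\ uniform signs $\epsilon(x)\in\{-1,+1\}$, so that $q^\xi(i)$ reduces to a single value $\bar q^\xi=\frac1N\sum_x\epsilon(x)$ taking $(2k-N)/N$ with probability $\binom{N}{k}2^{-N}$. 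When $\bar q^\xi=0$, formula \eqref{eq:prod} gives $\mu_t^\xi=\pi$ identically, and this occurs with probability $p_0:=\binom{N}{N/2}2^{-N}$ (note $N$ is even since $t\geq 1$). Writing $\mu_t=p_0\pi+(1-p_0)\nu_t$ with $\nu_t:=\bbE[\mu_t^\xi\mid \bar q^\xi\neq 0]$, one obtains
\[
\|\mu_t-\pi\|_{\rm TV}\,=\,(1-p_0)\,\|\nu_t-\pi\|_{\rm TV},
\]
so the lemma reduces to proving that $\|\nu_t-\pi\|_{\rm TV}\to 1$ as $n\to\infty$.

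The second ingredient is a soft singularity argument. Each nonzero value $\bar q$ attained by $\bar q^\xi$ satisfies $|\bar q|\geq 2/N$, a positive constant independent of $n$ because $t$ is fixed. Under $\mu_t^\xi$ the coordinates $\sigma_i$ are i.i.d.\ with mean $\bar q$, so by the law of large numbers the empirical mean $\frac1n\sum_i\sigma_i$ concentrates at $\bar q$, while under $\pi$ it concentrates at $0$. Take the event $E_n:=\{\sigma\in\Omega:\,|\sum_i\sigma_i|\geq n/N\}$. Hoeffding's inequality gives $\pi(E_n)\to 0$ and $\mu_t^\xi(E_n)\to 1$ for every realization of $\xi$ with $\bar q^\xi\neq 0$. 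Since $\bar q^\xi$ takes only finitely many values, dominated convergence yields $\nu_t(E_n)\to 1$, whence $\|\nu_t-\pi\|_{\rm TV}\geq \nu_t(E_n)-\pi(E_n)\to 1$. Combined with the displayed identity this gives the stated limit $1-p_0$.

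I do not anticipate a real obstacle here: the argument is essentially a separation statement, and the only delicate point, specific to the monochromatic setting, is the observation that $\bar q^\xi=0$ produces an exact copy of $\pi$ inside $\mu_t$, which is precisely why the asymptotic value is $1-\binom{2^t}{2^{t-1}}2^{-2^t}$ rather than $1$.
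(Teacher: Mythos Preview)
Your proof is correct and follows essentially the same approach as the paper. Both arguments hinge on the observation that $\bar q^\xi=0$ forces $\mu_t^\xi=\pi$ exactly, and both use the distinguishing event $\{|\sum_i\sigma_i|\ge n2^{-t}\}$ together with Hoeffding's inequality and the gap $|\bar q^\xi|\ge 2^{1-t}$ on $\{\bar q^\xi\neq 0\}$. The only difference is packaging: the paper bounds $\|\mu_t-\pi\|_{\rm TV}$ from above by $\bbE[\|\mu_t^\xi-\pi\|_{\rm TV}]\le \bbP(\bar q^\xi\neq 0)$ and from below by $\mu_t(A_t)-\pi(A_t)$ separately, whereas you first write the exact identity $\|\mu_t-\pi\|_{\rm TV}=(1-p_0)\|\nu_t-\pi\|_{\rm TV}$ and then reduce both directions to the single statement $\|\nu_t-\pi\|_{\rm TV}\to 1$. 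Your formulation is slightly cleaner, but the substance is identical.
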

\begin{proof}
From Lemma \ref{lem:rep},
\begin{equation}
  \|\mu_{t}-\pi\|_\tv\leq  \bbE\left[\big\| \mu^{\xi}_{t}-\pi\big\|_\tv\right].
\end{equation}
Since we are in the monochromatic case, 
under $\mu^{\xi}_t$, the $(\sigma_i)$ are IID $\{-1,1\}$ valued Bernoulli variables with parameter $\frac{1+\bar q^{\xi}}{2}$. In particular $\mu^{\xi}_{t}=\pi$ if $\bar q^{\xi}=0$. Hence we have
 \begin{equation}
   \bbE\left[ \big\|\mu^{\xi}_{t}-\pi\big\|_\tv\right]
   \le  \bbP[\bar q^{\xi}\ne 0]=1  -\binom{2^t}{2^{t-1}}\,2^{-2^t}.
 \end{equation}
In order to prove a matching lower bound,  consider the event
$$A_t:= \textstyle{\left\{ \left|\sum^n_{i=1}\sigma_i\right|\ge n2^{-t} \right\}}.$$
Then, \begin{align}
 \|\mu_{t}-\pi\|_\tv\ge \mu_{t}(A_t)-\pi(A_t)
 &= \bbE\left[\mu^{\xi}_t(A_t)\right]-\pi(A_t)\\
 &\ge \bbE\left[\mathbf{1}_{\{\bar q^{\xi}\ne 0\}} (1-\mu^{\xi}_t(A_t^c))\right] - \pi(A_t)\;.
 \end{align}
 By Hoeffding's inequality, we know that $\pi(A_t)\le 2 e^{-n 2^{-2t}/2}$. On the other hand, note that $\bar q_\xi$ takes its values in the set $\{ 2k 2^{-t}: k =-2^{t-1},\ldots, 2^{t-1}\}$. Consequently, assuming $\bar q^{\xi}\ne 0$, we have $|\bar q^{\xi}|\ge 2^{1-t}$ and therefore Hoeffding's inequality yields
 \begin{equation}
  \mu^{\xi}_t(A_t^c) \le \mu^{\xi}_t\left( |\textstyle{ \sum^n_{i=1}}\sigma_i - n \bar q^\xi | \ge n 2^{-t}\right) \le 2 e^{-n 2^{-2t} / 2}\;.
 \end{equation}
Hence we have
\begin{equation}
 \|\mu_{t}-\pi\|_\tv\ge \bbP[\bar q^{\xi}\ne 0] - 4e^{-n 2^{-2t}/2},
\end{equation}
which concludes our proof.
\end{proof}
\begin{remark}\label{rem:monotone}
Lemma \ref{sidecase1} shows in particular that the distance to equilibrium is far from being monotone for monochromatic initial states, see Figure \ref{fig:mon}. Indeed, in the limit of $n$ large, it drops from $1$ to $1/2$ in one step, it is \textit{macroscopically increasing} during the first few subsequent steps and then 
it gradually increases to $1$ before entering the cutoff window $t_{n,\lambda}=\lfloor\log_2n +\lambda\rfloor$, $\lambda\in\bbR$, where it settles to the cutoff profile as stated in Theorem \ref{monocase}.
  \end{remark}
\begin{figure}[h]
	\centering
	\includegraphics[width=5cm]{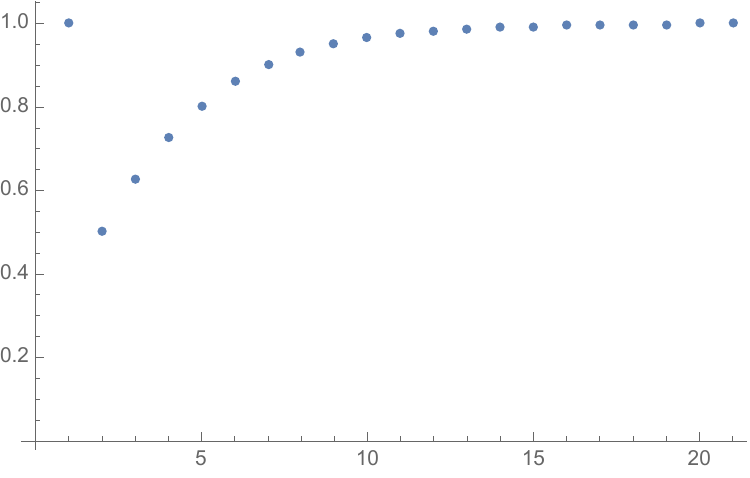}\\
	\caption{Plot of the function in the right hand side of \eqref{eq:nomo} expressing the total variation distance  after $t$ steps, started from the monochromatic distribution, in the limit $n\to\infty$.
	}	\label{fig:mon}
\end{figure}

\begin{remark}\label{rem:extra}
We observe that, with minor modifications, the argument in Lemma~\ref{sidecase1} can be extended to obtain the following statement:  
if $1 \ll 2^t\ll \sqrt{n}$, then 
\begin{equation}
    \|\mu_{t}-\pi\|_\tv= 1-  \frac{1}{\sqrt{\pi 2^{t}}}(1+o(1)). 
\end{equation}
On the other hand, with a bit of extra work, the argument in the  proof of Theorem \ref{monocase}, combined with the asymptotics 
$\varphi(s)=1 - \frac{2\sqrt{\log s}}{\sqrt{2\pi s}}(1+o(1))$, as $s\to \infty$, see Appendix \ref{app:Gauss},
  can be seen to imply that
 if $\sqrt{n} \ll 2^t\ll n$, then  we have 
\begin{equation}
\|\mu_{t}-\pi\|_\tv= 1- \frac{2\sqrt{\log (n/2^{t})}}{\sqrt{2\pi (n/2^{t})}}(1+o(1))
\end{equation}
\end{remark}

\subsection{Lower bound for discrete time}\label{hellingersection}
The lower bound obtained in Theorem \ref{monocase} for the monochromatic state is of order $s$ when $s$ is small. More precisely, as shown in Appendix \ref{app:Gauss}
\[
\varphi(s)= \frac{s}{\sqrt{2e\pi}}\,(1+o(1)),\qquad s\downarrow 0\,,
\]
which matches up to constant with the upper bound we found in Theorem \ref{th:discrete}.
However, for large values of $s$, 
\[
\varphi(s)=1 - \frac{2\sqrt{\log s}}{\sqrt{2\pi s}}\,(1+o(1))\,,\qquad s\to \infty\,,
\]
which is very different from \eqref{unilow}, see Appendix \ref{app:Gauss}. It turns out that non monochromatic initial condition obtained by taking products of monochromatic distributions can produce something much closer to \eqref{unilow}.

\begin{proposition}\label{prop:lowerbd}
 There exists $c>0$ such that for every $n\ge 1$ and $t\ge 1$, setting $s= n 2^{-t}$, we have
 \begin{equation}
  D_n(t)\ge 1- 2e^{-cs}\,.
 \end{equation}
\end{proposition}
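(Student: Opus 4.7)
My plan is to use an initial state consisting of a product of monochromatic distributions over disjoint blocks of $[n]$, and exploit the multiplicativity of the Bhattacharyya affinity together with the sharp monochromatic asymptotics of Theorem \ref{monocase}. Note first that when $cs\le \log 2$ the target bound $1-2e^{-cs}\le 0$ is automatic, so by choosing $c>0$ small enough at the end we may restrict to $s\ge s_0$ for any fixed threshold $s_0\ge 2$. Set $k:=2^t$ and $m:=\lfloor n/k\rfloor=\lfloor s\rfloor$, so $m\ge s/2$. Partition $[n]$ into $m$ disjoint blocks $B_1,\ldots,B_m$ of size $k$ together with a remainder $R$ (with $|R|<k$), and take
$$\mu\;:=\;\pi^R\otimes\bigotimes_{j=1}^m\mu^{(j)}\,,\qquad \mu^{(j)}\;:=\;\tfrac12\,\ind_{+_{B_j}}+\tfrac12\,\ind_{-_{B_j}}\,,$$
with $\pi^R$ the uniform measure on $\{-1,+1\}^R$; then $\mu\in\cB_n$.

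A short induction on the definition \eqref{onestep} yields the factorization property: whenever $\nu=\nu_1\otimes\nu_2$ lives over a block and its complement, one has $\nu\circ\nu=(\nu_1\circ\nu_1)\otimes(\nu_2\circ\nu_2)$, each $\circ$ being understood on its own alphabet. Iterating, $T_t(\mu)=\pi^R\otimes\bigotimes_j T_t(\mu^{(j)})$, and likewise $\pi=\pi^R\otimes\bigotimes_j\pi^{(j)}$. Introducing the Bhattacharyya affinity $\rho(\nu,\nu'):=\sum_\sigma\sqrt{\nu(\sigma)\nu'(\sigma)}$, which is multiplicative on independent tensor products and satisfies $\|\nu-\nu'\|_\tv\ge 1-\rho(\nu,\nu')$ (from $\min(a,b)\le\sqrt{ab}$), this yields
$$D_n(t)\;\ge\;\|T_t(\mu)-\pi\|_\tv\;\ge\;1-\prod_{j=1}^m\rho_j\,,\qquad \rho_j:=\rho\bigl(T_t(\mu^{(j)}),\pi^{(j)}\bigr).$$
The crux of the proof is then a uniform per-block bound $\rho_j\le 1-\delta$ for some universal $\delta>0$ and every integer $t\ge 1$. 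Once established, $\prod_j\rho_j\le(1-\delta)^m\le e^{-\delta s/2}$, and hence $D_n(t)\ge 1-e^{-(\delta/2)s}\ge 1-2e^{-cs}$ with $c=\delta/2$, concluding the argument.

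To prove the per-block bound I would split into two regimes. For $k$ large, Theorem \ref{monocase} applied inside a single block of size $k$ (with the ratio $k\cdot 2^{-t}=1$ held fixed) gives $\|T_t(\mu^{(j)})-\pi^{(j)}\|_\tv\to\varphi(1)>0$; combined with the Cauchy-Schwarz inequality $\|\cdot\|_\tv^2\le 2(1-\rho)$ (obtained from $|\sqrt{f}-\sqrt{g}|\cdot|\sqrt{f}+\sqrt{g}|=|f-g|$), this forces $\rho_j\le 1-\varphi(1)^2/8$ as soon as $k\ge k_0$ for a suitable threshold. For the finitely many remaining cases $k\in\{2,4,\ldots,2^{k_0-1}\}$, one observes that $\cov_{T_t(\mu^{(j)})}(\sigma_i,\sigma_\ell)=2^{-t}$ for distinct $i,\ell\in B_j$, since a one-step calculation from \eqref{onestep} shows that each recombination halves pair covariances (with probability $1/2$ the two sites end up on opposite sides of the uniform split $A$, making them independent); this forces $T_t(\mu^{(j)})\ne\pi^{(j)}$, hence $\rho_j<1$ strictly, and taking the minimum over this finite set provides the universal $\delta$. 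The main obstacle is precisely this non-asymptotic control of the per-block affinity: Theorem \ref{monocase} is only an asymptotic statement and must be complemented by the direct covariance-based verification for small values of $k$.
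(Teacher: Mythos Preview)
Your argument is correct and takes a genuinely different route from the paper's.

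Both proofs start from the same initial condition --- a product of monochromatic distributions over disjoint blocks --- and both exploit tensorization across blocks. The difference lies in \emph{how} the per-block contribution is controlled. The paper takes blocks of size $p=80\cdot 2^t$ and builds an explicit distinguishing event $A=\{Z\ge \alpha/15\}$ based on the count of blocks with large squared magnetization; it then computes the first two moments of $\Xi_1$ under $\mu_t$ directly and applies Paley--Zygmund to obtain $\mu_t(\Xi_1\ge 20p)\ge 1/12$, so that $Z$ dominates an i.i.d.\ Bernoulli sum and a large deviation bound concludes. Your approach instead takes blocks of size $k=2^t$ and bypasses the distinguishing event entirely by using the multiplicativity of the Bhattacharyya affinity: the problem reduces to a uniform upper bound $\rho(t)<1-\delta$ on a \emph{single} quantity indexed by $t$, which you obtain from Theorem~\ref{monocase} (applied at parameter $s=1$) for large $t$, and from the nonvanishing pair covariance $2^{-t}$ for the finitely many small $t$.

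Your route is more modular and conceptually clean --- the Hellinger tensorization turns the many-block problem into a one-block problem --- but it is not self-contained: it imports the full strength of the profile result Theorem~\ref{monocase}, whose proof requires the delicate Gaussian approximation of Proposition~\ref{maincase}. The paper's route avoids this dependency by redoing the relevant second-moment computation from scratch (this is what the remark about ``detailed knowledge of the monochromatic case'' alludes to), at the cost of carrying explicit numerical constants ($80$, $40p$, $20p$, $1/12$, $1/15$) through the argument. A minor notational slip: your ``$k\in\{2,4,\ldots,2^{k_0-1}\}$'' should read $t\in\{1,\ldots,t_0-1\}$ with $k_0=2^{t_0}$, but the intent is clear.
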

\noindent Combining Lemma \ref{lem:ubounds} and Proposition \ref{prop:lowerbd}, we obtain Theorem \ref{th:discrete}.

\begin{proof}
First of all, assume that the result holds under the further restriction that $s\ge 80$. Then, one can diminish the value of $c$ in such a way that $1-2e^{-cs}$ is negative for all $s\in [0,80]$ so that the asserted bound trivially holds for these $s$. We can therefore assume that $s\ge 80$.\\
We split the set $[n]$ of coordinates into groups of cardinality $p:=80\times 2^{t}$ and, if $n$ is not a multiple of $p$, an additional group of cardinality strictly less than $p$. We let $\alpha= \lfloor s/80 \rfloor$ denote the number of group of cardinality $p$ that one obtains in this manner, note that $\alpha \ge 1$ since we work with $s\ge 80$. Thus, it is sufficient 
to find a measure $\mu$ and an event $A$ which are such that
$$ \pi(A)\le e^{-c\alpha} \;\;\text{ and } \;\;\mu_t(A^c)\le e^{-c\alpha}\,. $$
We let $\Xi_i$ denote the squared magnetization of the $i$-th group.
\begin{equation}
 \Xi_i:= \left(\sum_{j=(i-1)p+1}^{ip} \sigma_j\right)^2.
\end{equation}
We set
\begin{equation}
X_i:=\ind_{\{\Xi_i\ge 20p\}}, \quad  Z:=\sum_{i=1}^{\alpha}  X_i \quad \text{ and }
 A:=\left\{ Z \ge \frac{\alpha}{15} \right\}.
\end{equation}
Finally we set $\mu= (\nu_p)^{\otimes \alpha}\otimes \nu_{n-p\alpha}$, where $\nu_k$ denotes the monochromatic measure $\frac{1}{2}(\ind_{+}+\ind_-)$ on $\{-1,1\}^k$. That is to say that under $\mu$ the spin configuration is monochromatic on each block of size $p$, but the spins given to blocks are IID.
Let us first estimate  $\pi(A)$.
We have by Markov's inequality
\begin{equation}
 \pi(\Xi_1\ge 20p)\le \frac{\pi(\Xi_1)}{20p}=\frac{1}{20}.
\end{equation}
(where we used the short hand notation $\nu(f)=\int f \dd \nu$).
Since the $(\Xi_i)_i$ are IID under $\bbP$, $Z$ is a sum of IID Bernoulli variables with parameter smaller than $1/20$. By a standard large deviation computation there exists  a universal constant $c>0$
such that $\pi(A)\le e^{-c\alpha}.$

Under the measure $\mu_t$ the variables $(\Xi_i)^{\alpha}_{i=1}$ are still IID, since block independence is preserved through the iterations.
We have
\begin{equation}\label{40p_disc}
  \mu_{t}\left(\Xi_1\right) = p(p-1)2^{-t}+p=80(p-1)+p\ge 40p.
\end{equation}
To compute the second moment, we simply expand the product and compute the expectation of each type of term appearing in the expansion and the number of times they appear. This yields
\begin{multline}\label{secmondiscrete}
    \mu_{t}\left(  \Xi^2_1 \right)=
    p(p-1)(p-2)(p-3)     \mu_{t}\left(  \sigma_1\sigma_2 \sigma_3 \sigma_4\right)\\ + 6p (p-1) (p-2) \mu_{t}\left(  \sigma^2_1\sigma_2 \sigma_3 \right)+  4p(p-1)  \mu_{t}\left(  \sigma^3_1\sigma_2\right)\\
    + 3p(p-1) \mu_{t}\left(  \sigma^2_1\sigma^2_2\right)
    +p  \mu_{t}\left(  \sigma^4_1\right).
    \end{multline}
    and thus
    \begin{multline}
     \mu_{t}\left(  \Xi^2_1 \right)=
p(p-1)(p-2)(p-3)\left(3\times 4^{-t}(1-2^{-t})+ 8^{-t}\right)\\ +\left[6p (p-1)(p-2)+ 4p(p-1)\right] 2^{-t}+[3(p-1)p+p].
\end{multline}
Simplifying a couple of terms we obtain 
\begin{equation}
 \mu_{t}\left(  \Xi^2_1 \right)\le 3 p^2(p-1)^24^{-t}+6 p^2(p-1) 2^{-t}+ 3p^2 =3 \mu_t(\Xi_1)^2 
\end{equation}
Hence using Paley-Zygmund's Inequality we have 
\begin{equation}
 \mu_{t}\left( \Xi_1 \ge  20p \right)\ge \mu_{t}\left( \Xi_1 \ge  \frac{1}{2}\mu_{t}(\Xi_1) \right)
\ge \frac{\mu_{t}(\Xi_1)^2 }{4  \mu_{t}(\Xi^2_1 )}=\frac{1}{12}.
\end{equation}
The variables $(X_i)_{i=1}^{\alpha}$ thus dominate IID Bernoulli with parameter $1/12$, therefore by a standard large deviation computation there exists a universal constant $c>0$ such that
$\mu_{t}\left( A^c \right)\le e^{-c\alpha}$,
which concludes the proof.
\end{proof}

\subsection{Remarks on more general initial distributions}\label{rem:extensions}
The proofs given above can be extended with minor modifications to prove the cutoff result \eqref{ubound1}-\eqref{lbound1} beyond the case of balanced initial distributions. Let us give the details of this result in the case where $\mu\in\cP(\O)$ has marginals $\mu(\si_i=+1)=p$ and $\mu(\si_i=-1)=1-p$, for some fixed $p\in(0,1)$.\\
In order to simplify the notation, it is convenient to take the spins $\sigma_i$ as elements of $\{-\sqrt{\frac{p}{1-p}},\sqrt{\frac{1-p}{p}}\}$ rather than $\{-1,1\}$ so that 
\begin{equation}\label{eq:margp}
\mu\(\si_i=\sqrt{\frac{1-p}{p}}\,\)=p\quad \text{and}\quad \mu\(\si_i=-\sqrt{\frac{p}{1-p}}\,\)=1-p.
\end{equation}
 Clearly, 
renaming the values of the spin has no effect on the results.
Note that $\sigma_i$ has variance $1$ under $\mu$. Note also that Lemma \ref{lem:rep} holds for any $\mu\in\cP(\O)$, and thus $\mu_t=\bbE[\mu_t^\xi]$, with  $\mu_t^\xi$ the measure with density 
\begin{equation}
	h^\xi_t(\si)=\frac{\dd\mu^{\xi}_t}{\dd\pi}(\si)=\prod_{i=1}^n \left(1+\sigma_i q^{\xi}(i)\right)\,,
\end{equation}
where $\pi$ denotes the product measure with marginals as in \eqref{eq:margp}, 
and $q^\xi$ is defined as before, see \eqref{eq:qxii}.\\
Then, the proof presented in the balanced case can be adapted to this setting and allows us to establish a cutoff phenomenon. In particular, the proof of Lemma \ref{lem:ubounds} applies verbatim and yields
$$ \| \mu_t-\pi \|_{\rm  TV} \le \bbE\left[\langle q^{\xi},q^{\xi}\rangle\right] = n \bbE\left[(q^\xi(1))^2\right] = n2^{-t}\,.$$
Similarly, one can follow the proof of Theorem \ref{monocase} provided one takes
\begin{align*}
	\alpha_n^{\xi}&:=\sqrt{np(1-p)}\log \left(\frac{1+\sqrt{\frac{1-p}{p}}\bar q^{\xi}}{1-\sqrt{\frac{p}{1-p}} \bar q^{\xi}} \right),\\
	\beta_n^{\xi}&:=n\bigg( p \log \left( 1+\sqrt{\frac{1-p}{p}}\bar q^{\xi} \right) + (1-p) \log \left( 1-\sqrt{\frac{p}{1-p}}\bar q^{\xi} \right) \bigg)\;.
\end{align*}
It is elementary to check that \eqref{distrybe} still holds in this context, and the subsequent arguments apply. The conclusion is that the same profile result in the statement \eqref{perfil} holds for all fixed $p\in(0,1)$ as $n\to\infty$. In particular, one has a lower bound on the total variation distance, and thus the cutoff displayed in \eqref{ubound1}-\eqref{lbound1} holds for all fixed $p\in(0,1)$. We note that while the upper bound applies without restrictions on $p$, some degree of nondegeneracy of $p$ as $n\to\infty $ is needed for the argument in the lower bound.

Moreover, the cutoff result holds also in the non-homogeneous case $\mu(\si_i=+1)=p_i$ and $\mu(\si_i=-1)=1-p_i$, as long as $p_i\in[\d,1-\d]$ for some fixed $\d\in(0,1/2)$. The proof of the upper bound goes exactly as above, with the only modification that the $p$ in \eqref{eq:margp} now depends on $i$. 
For the lower bound, more work is required since one has to introduce a non-homogeneous analogue of the monochromatic distribution. We omit the  details to maintain a more concise presentation.

Furthermore, we believe that a similar cutoff result also holds in the more general setting where the Boolean cube is replaced by an arbitrary product space $\bbX^n$, where $\bbX$ is a finite set, provided that the marginals $\mu_{\{i\}}$ are given by probability vectors with uniformly positive entries. However, we do not address this general problem 
in the present work. 

\section{Continuous time}

We start with a graphical construction for the solution $\mu_t=S_t(\mu)$ of the continuous time equation \eqref{cteq}. In analogy with the discussion in Section \ref{sec:graph_repr}, this involves a binary tree and a fragmentation process. The main difference is that the tree itself is given by a branching  random process.    
\subsection{Graphical construction and fragmentation in continuous time}\label{sec:graph_repr_ct}
 
 \subsubsection*{Tree considerations}
 We define the infinite rooted binary tree as $\mathbb T:=\{\emptyset\} \bigsqcup_{k\ge 1} \{0,1\}^k$ ($\bigsqcup$ being used to denote the disjoint union), and for $x\in \bbT$ we let $|x|$ denote the length of the sequence $x$.
 We equip $\mathbb T$ with the order $\prec$ by saying that $x \prec y$, if $|x| < |y|$ and $y$  can be written as $(x,z)$ for some $z\in \{0,1\}^{|y|-|x|}$ (with some slight abuse of notation $(x,z)$ denotes the concatenation of the two sequences). A finite rooted binary tree is a finite subset $\gamma$ of $\mathbb T$ which satisfies the following:
\begin{enumerate}
		\item For any $x\in \gamma$ and any $y\in \mathbb{T}$, if $y \prec x$ then $y\in \gamma$.
		\item For any $x\in \gamma$ either $\{(x,0),(x,1)\}\subset \gamma$ or $\{(x,0),(x,1)\}\cap \gamma=\emptyset$.
	\end{enumerate}
	The set of maximal elements in $\gamma$ (for $\prec$) that is, its leaves, is denoted by
	\begin{equation}\label{leaf}
	L(\gamma):= \{ x\in \gamma \ : \ \nexists y\in \gamma, \  x \prec y\}.
	\end{equation}
	Given a finite rooted binary tree $\gamma$ and $x\in \gamma$, we define $\gamma^x$, the subtree of $\gamma$  rooted at $x$, by 
	\begin{equation}\label{rooted}
	\gamma^x:= \{ y\in \bbT \ : \ (x,y)\in \gamma \}.
	\end{equation}
Thus, 
if $\gamma\neq \{\emptyset\}$, then $\gamma^0,\gamma^1$ denote, respectively, the ``left'' and ``right'' subtrees of $\gamma$ after the first splitting.

\subsubsection*{Finite binary tree and measure recombination}
	For any finite binary rooted tree $\g$, and any $\mu\in\cP(\O)$, 
we define the distribution $p(\g,\mu)$ as follows.
Assign to each node $u$ of $\g$ a distribution $\nu_u\in\cP(\O)$ in such a way that each of the leaves is assigned the distribution $\mu$ and  the distribution at each internal node $u$ is computed as the collision product of the two distributions at the children of $u$. Then $p(\g,\mu)$ is the distribution at the root of the tree,  see Figure \ref{fig:fig2} for an example.

\begin{figure}[h]
\center
\begin{tikzpicture}[scale=0.75]
    \draw  (1,-3) -- (4,3);
    \draw  (3,-3) -- (2,-1);
    \draw  (4,-1) -- (3,1);
    \draw  (5,1) -- (4,3);
    
    \node[shape=circle, draw=black, fill = white, scale = 1.5]  at (3,1) {}; 
    \node[shape=circle, draw=black, fill = white, scale = 1.5]  at (2,-1) {};
    \node[shape=circle, draw=black, fill = gray, scale = 1.5]  at (1,-3) {};
    \node[shape=circle, draw=black, fill = gray, scale = 1.5]  at (3,-3) {};
    \node[shape=circle, draw=black, fill = gray, scale = 1.5]  at (4,-1) {};
    \node[shape=circle, draw=black, fill = white, scale = 1.5]  at (4,3) {};
    \node[shape=circle, draw=black, fill = gray, scale = 1.5]  at (5,1) {};
    \node at (1/4,-3) {$\mu$};
    \node at (9/4,-3) {$\mu$};
    \node at (1-.1,-1) {$\mu \circ \mu$};
    \node at (13/4,-1) {$\mu$};
    \node at (17/4,1) {$ \mu$};
    \node at (6/4-.2,1) {$(\mu\circ \mu) \circ \mu$};
    \node at (8/4-.3,3) {$((\mu\circ \mu) \circ \mu)\circ \mu$};
    \end{tikzpicture}
\caption{A possible tree $\g$ with 4 leaves, and the corresponding distribution at the root  $p(\g,\mu)=((\mu\circ \mu) \circ \mu)\circ \mu.$}
\label{fig:fig2}
\end{figure}
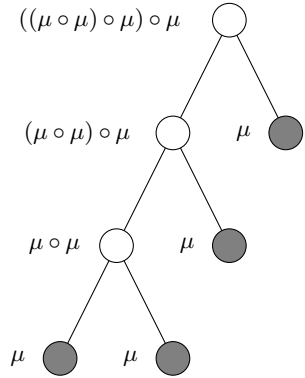

	\subsubsection*{The fragmentation process}
We consider a process $(\cT_t)_{t\ge 0}$ taking values in the set of finite rooted binary trees which we define as follows. We start with $\cT_0=\{\emptyset\}$. Then each leaf of the tree is equipped with a clock that rings after an exponentially distributed random time of mean one. When a clock rings on a leaf $x$ at time $t$ the vertices 
$(x,0)$ and $(x,1)$ are added to $\cT_t$ ($x$ gives birth to two new leaves).
Equivalently, the process $\cT_t$ can be seen as the result of first passage percolation on a complete binary rooted tree: We consider $(\cE_x)_{x\in \bbT}$ to be  a collection of exponential random variables of mean one. To each vertex we associate a time $\tau_x:= \sum_{y \prec x} \cE_y,$
and define 
\begin{equation}\label{defct}
\cT_t:= \{ x\in \mathbb T \ : \tau_x\le t    \}
\end{equation}
We  denote by $\bP$ the probability measure under which $\cT_t$ is defined, and write $\bE$ for the corresponding expectation.

\medskip

\begin{lemma}\label{lem:rep_ct}
Let $(\cT_t)$ be the branching process defined above. Then, for any $\mu\in\cP(\O)$, the solution to \eqref{cteq} is given by  
\begin{equation}\label{eq:solt}
 \mu_t= \bE\left[p(\cT_t,\mu)\right].
\end{equation}
\end{lemma}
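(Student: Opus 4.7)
The strategy is to define $\nu_t := \bE[p(\cT_t,\mu)] \in \cP(\O)$ and show that $(\nu_t)_{t\ge 0}$ satisfies the Cauchy problem \eqref{cteq}, so that $\nu_t = S_t(\mu) = \mu_t$ by uniqueness (see \cite{baake2015general}). This is the natural continuous-time analogue of Lemma \ref{lem:rep}, but instead of proving the formula by induction on a deterministic parameter $t\in\bbN$, I verify that $\nu_t$ solves the ODE directly. At $t=0$ one has $\cT_0 = \{\emptyset\}$, whose unique node is a leaf, so $p(\cT_0,\mu) = \mu$ and therefore $\nu_0 = \mu$.

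To derive the evolution equation I condition on the splitting time $\cE_\emptyset \sim \mathrm{Exp}(1)$ at the root. With probability $e^{-t}$ no split has occurred before time $t$, in which case $\cT_t = \{\emptyset\}$ and $p(\cT_t,\mu) = \mu$. On the event $\{\cE_\emptyset = s\}$ with $s < t$, the tree $\cT_t$ decomposes into the root together with two subtrees $\cT_t^0$ and $\cT_t^1$ which, by the construction \eqref{defct}, are built from the disjoint independent families of exponentials $(\cE_{(0,y)})_{y\in\bbT}$ and $(\cE_{(1,y)})_{y\in\bbT}$; hence they are independent copies of $\cT_{t-s}$. Using the recursive identity $p(\g,\mu) = p(\g^0,\mu) \circ p(\g^1,\mu)$ for $\g \neq \{\emptyset\}$, together with the bilinearity of $\circ$ (which is inherited from the linearity of the marginal operation $\mu\mapsto\mu_A$ on signed measures), the expectation factors across the two independent subtrees and one obtains
\begin{equation}
\nu_t \;=\; e^{-t}\, \mu \;+\; \int_0^t e^{-s}\, \nu_{t-s} \circ \nu_{t-s}\, ds.
\end{equation}
Differentiating this integral equation yields $\tfrac{d}{dt}\nu_t = -\nu_t + \nu_t \circ \nu_t$, so $\nu_t$ indeed satisfies \eqref{cteq} with initial datum $\mu$, and uniqueness gives \eqref{eq:solt}.

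There is no serious obstacle in this argument: the state space $\O$ is finite, so $\nu_t$ lives in a finite-dimensional simplex and analytic operations such as differentiation under the integral, or the bilinearity expansion of $\circ$, are routine. The only point worth double-checking is the branching property of $\cT_t$ used in the key factorisation $\bE[p(\cT_t^0,\mu)\circ p(\cT_t^1,\mu)\mid \cE_\emptyset=s] = \nu_{t-s}\circ\nu_{t-s}$; it is immediate from the first-passage-percolation construction \eqref{defct}, but it is the one place where the proof differs structurally from the discrete-time Lemma \ref{lem:rep}.
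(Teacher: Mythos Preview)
Your proof is correct and follows the same overall strategy as the paper: define $\nu_t=\bE[p(\cT_t,\mu)]$ and verify directly that it satisfies the Cauchy problem \eqref{cteq}. The only difference is in the bookkeeping. The paper expands $\nu_t=\sum_\g \bP(\cT_t=\g)\,p(\g,\mu)$ and differentiates using the recursion $\partial_t\bP(\cT_t=\g)=\bP(\cT_t=\g^0)\bP(\cT_t=\g^1)-\bP(\cT_t=\g)$ for $\g\neq\{\emptyset\}$, then re-sums. You instead condition on the first split time to obtain the Duhamel-type integral equation $\nu_t=e^{-t}\mu+\int_0^t e^{-s}\,\nu_{t-s}\circ\nu_{t-s}\,ds$ and differentiate that. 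These are two presentations of the same branching decomposition; your version is marginally more direct since it bypasses the sum over trees, while the paper's version makes explicit the Kolmogorov-type equation satisfied by the tree law itself.
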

\begin{proof}
We set $\rho_t:= \bE\left[p(\cT_t,\mu)\right]$ clearly we have $\rho_0=\mu$ so we only need to show that $\rho_t$ satisfies \eqref{cteq}. Setting  $\nu_t(\g):=\bP\left[\cT_t=\g\right]$ we have
\begin{equation}\label{eq:solt3}
\rho_t= \sum_{\g}\nu_t(\g)p(\g,\mu)=\nu_t(\{\emptyset\})\mu + \sum_{\g\neq \{\emptyset\}}\nu_t(\g)p(\g,\mu)\,.
\end{equation}
By definition of $\cT_t$ one has (recall the notation \eqref{rooted})
\begin{equation}\label{eq:solt2}
\nu_t(\{\emptyset\})=e^{-t}\quad  \text{ and } \quad \partial_t\nu_t(\g)= \ind_{\{\g\neq \{\emptyset\}\}}\nu_t(\g^{0})\nu_t(\g^{1})- \nu_t(\g)\,.
\end{equation}
The second identity in \eqref{eq:solt2} can be checked as follows. If $\gamma =\{\emptyset\}$, then $\nu_t(\g) = e^{-t}$ and the identity follows. If $\gamma \neq \{\emptyset\}$ then the first branching occurs at some time $s \in[0,t]$ and we have
$$ \nu_t(\g) = \int_0^t e^{-(t-s)} \nu_s(\g^{0}) \nu_s(\g^{1}) ds\;,$$
and differentiating this expression yields the asserted identity.

By construction, if $\g\neq \{\emptyset\}$, one has that $p(\g,\mu)=p(\g^0,\mu)\circ p(\g^1,\mu)$.  Thus differentiating \eqref{eq:solt3} and using \eqref{eq:solt2} we obtain  
\begin{equation*}\begin{split}
\partial \rho_t &= - e^{-t}\mu + \sum_{\g\neq \{\emptyset\}}\left[\nu_t(\g^{0})\nu_t(\g^{1})p(\g^0,\mu)\circ p(\gamma^1,\mu)-\nu_t(\gamma)p(\gamma,\mu)\right]\\
&= - e^{-t}\mu + \sum_{\g',\gamma''}\nu_t(\g')\nu_t(\g'')p(\g',\mu)\circ p(\gamma'',\mu)-\sum_{\g\neq \{\emptyset\}}\nu_t(\gamma)p(\gamma,\mu) \\ 
& = \rho_t\circ \rho_t - \rho_t\, ,
\end{split}\end{equation*}
where the sum in $\gamma',\gamma''$ runs over all finite binary rooted trees. This concludes the proof.
\end{proof}
We turn to a pathwise description of the above construction. 
Recalling \eqref{leaf}, given a finite binary rooted tree $\gamma$ we let $U_i$, $i\in[n]$ be IID random variables taking values in $L(\g)$ with distribution given by
\begin{equation}\label{eq:uig}
P_\g(U_i = x) = 2^{-|x|}\,,\qquad x\in L(\g).
\end{equation}
Since $\sum_{x\in L(\g)}2^{-|x|} = 1$ for any $\g$, $P_\g$ is indeed a probability measure. Considering a random walk  that starts at the root of $\g$ and climbs the tree left or right with equal probability at each step, $P_{\gamma}$ is the law of the leaf at which this random walk ends. Equivalently, if we let $\cU$ be a uniform random variable in $[0,1]$ and let $x=x_{\gamma}(\cU)$ be the unique element of $L(\gamma)$ such that the first $|x|$ digits in the dyadic expansion of $\cU$ (which naturally encodes a random walk on $\bbT$) are given by $x$, then $P_{\gamma}$ is the law of $x_{\gamma}(\cU)$.

\medskip

\noindent Next, using $(U_i)_{i\in [n]}$, we define a random partition  $(A_x)_{x\in L(\gamma)}$ of $[n]$, by setting
 \begin{equation}\label{eq:uig2}
A_x:=\{ i\in[n]\,: \  U_i=x \}.
\end{equation}
We call again $P_\g$ the law of the random partition of $[n]$ obtained in this way, and write $E_\g$ for the expectation with respect to $P_\g$. 
\begin{lemma}\label{lem:rep_ct2}
For any finite binary rooted tree $\g$, for any $\mu\in\cP(\O)$,  
\begin{equation}\label{eq:uig3}
p(\g,\mu) = E_\g\left[ \bigotimes_{x\in L(\gamma)} \mu_{A_x}\right].
\end{equation}
\end{lemma}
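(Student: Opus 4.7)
The plan is to argue by induction on the size of $\gamma$ (equivalently, on $|L(\gamma)|$). The base case $\gamma=\{\emptyset\}$ is immediate: there is a single leaf, $A_\emptyset=[n]$, and both sides reduce to $\mu$.

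For the inductive step, suppose $\gamma\neq\{\emptyset\}$, with left and right subtrees $\gamma^0,\gamma^1$. By the construction of $p(\gamma,\mu)$ at the root, one has $p(\gamma,\mu)=p(\gamma^0,\mu)\circ p(\gamma^1,\mu)$. Applying the inductive hypothesis to each factor and expanding the collision product via \eqref{onestep} yields
\begin{equation*}
p(\gamma,\mu)= 2^{-n}\!\!\sum_{A\subset[n]}\! E_{\gamma^0}\!\otimes\! E_{\gamma^1}\!\left[\Bigl(\bigotimes_{y\in L(\gamma^0)} \mu_{A^0_y}\Bigr)_{A}\otimes \Bigl(\bigotimes_{y\in L(\gamma^1)} \mu_{A^1_y}\Bigr)_{A^c}\right],
\end{equation*}
where $(A_y^\epsilon)_{y\in L(\gamma^\epsilon)}$ denotes the random partition of $[n]$ induced by an IID family $(U^\epsilon_i)_{i\in[n]}$ of $P_{\gamma^\epsilon}$-distributed variables, and the two families are independent. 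Since the marginal of a tensor product indexed by a partition of $[n]$ factorises as the product of marginals on the intersected blocks, the bracketed expression simplifies to $\bigotimes_{y\in L(\gamma^0)} \mu_{A^0_y\cap A}\otimes \bigotimes_{y\in L(\gamma^1)} \mu_{A^1_y\cap A^c}$.

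It remains to identify this with $E_\gamma\bigl[\bigotimes_{x\in L(\gamma)} \mu_{A_x}\bigr]$. The key observation is the self-similar structure of $P_\gamma$ at the root: writing $U_i=(\epsilon_i,V_i)$, the identity $P_\gamma(U_i=(\epsilon,y))=\tfrac12\cdot 2^{-|y|}$ tells us that $\epsilon_i$ is a fair coin and, conditionally on $\epsilon_i$, $V_i\sim P_{\gamma^{\epsilon_i}}$. Hence the set $A:=\{i:\epsilon_i=0\}$ is uniform over subsets of $[n]$, producing the factor $2^{-n}\sum_A$, and conditionally on $A$ the families $(V_i)_{i\in A}$ and $(V_i)_{i\in A^c}$ are independent IID samples from $P_{\gamma^0}$ and $P_{\gamma^1}$ respectively. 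For each leaf $x=(\epsilon,y)\in L(\gamma)$, the block $A_x=\{i:U_i=x\}$ equals $\{i\in A^{(\epsilon)}:V_i=y\}$ with $A^{(0)}=A$ and $A^{(1)}=A^c$, and therefore has the same joint distribution as $A^\epsilon_y\cap A^{(\epsilon)}$ under $E_{\gamma^\epsilon}$. Substituting this coupling matches both sides and closes the induction. The step requiring the most care is the marginal manipulation alluded to above, namely that the pushforward on $A$ of a tensor product indexed by a partition of $[n]$ splits as the tensor product of marginals on the intersected subsets; once that is in hand, everything else follows cleanly from the recursive description of $P_\gamma$.
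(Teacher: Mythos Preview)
Your proof is correct and follows essentially the same approach as the paper: induction on the tree, using $p(\gamma,\mu)=p(\gamma^0,\mu)\circ p(\gamma^1,\mu)$ together with the recursive structure of $P_\gamma$ at the root split. The only cosmetic differences are that you induct on $|L(\gamma)|$ rather than on the height, and you spell out explicitly the decomposition $U_i=(\epsilon_i,V_i)$ that the paper summarizes as ``it is not hard to see''.
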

\begin{proof}
We are going to prove \eqref{eq:uig3} by induction on the height $\max_{x\in L(\gamma)} |x|$ of the tree $\gamma$.
The case $\g=\{\emptyset\}$ is immediate. For any $\g\neq \{\emptyset\}$, recall that we have  $p(\g,\mu) = p(\g^0,\mu)  \circ
p(\g^1,\mu)$. Assuming, inductively, the validity of \eqref{eq:uig3} for $\g^0$ and $\g^1$, if $A$ is a uniformly random subset of $[n]$, one has
\[
p(\g,\mu) 
= E\left[ \left(\bigotimes_{x\in L(\gamma^0)} \mu_{A_x\cap A}\right)  \otimes   \left(\bigotimes_{y\in L(\gamma^1)} \mu_{A'_y\cap A}\right)\right],
\] 
where $(A_x)_{x\in L(\gamma^0)}$ and $(A'_y)_{y\in L(\gamma^1)}$ denote the partitions distributed according to $P_{\g^0}$ and $P_{\g^1}$ respectively, and the expectation $E$ is with respect to the independent triple $(A,(A_x),(A'_y))$. On the other hand, it is not hard to see that if $(A,(A_x),(A'_y))$ is as above, then
$$( (A'_x\cap A)_{x\in L(\gamma^0)} , (A'_y\cap A^{c})_{y\in L(\gamma^1)})  $$  has (after an appropriate relabelling which makes it a sequence indexed by $L(\gamma)$)  the distribution $P_\g$ defined by \eqref{eq:uig}-\eqref{eq:uig2}.
\end{proof}

Finally  we consider $\xi=(\xi(x),x\in\bbT)$  a field of IID random variables with law $\mu$ and let $\bbP$ denote the associated distribution.
Then for a finite binary rooted tree $\gamma$ we define 
\begin{equation}\label{eq:quenched}
\mu^{\xi}_\gamma(\si)=2^{-n}\prod_{i=1}^n (1+\si_i q^{\xi,\gamma}(i))\,,
\end{equation}
where 
\begin{equation}\label{def:qxi_cont}
q^{\xi,\gamma}(i):=\sum_{x\in L(\gamma)} 2^{-|x|}\xi_i(x).
\end{equation}
Repeating the argument of Lemma \ref{lem:rep}, we see that the probability $\mu^{\xi}_{\gamma}$ can be sampled by first sampling 
$(U_i)_{i\in [n]}$ IID with law $P_{\gamma}$ and then setting $\si_i=\xi_i(U_i)$.
Hence as a consequence of Lemma \ref{lem:rep_ct2} we have $
 p(\g,\mu) = \bbE\left[\mu^{\xi}_{\gamma}\right]$
and thus Lemma \ref{lem:rep_ct}  yields
\begin{equation}\label{eq:rep_ct21}
\mu_t =\bbE\otimes \bE \left[\mu^{\xi}_{\cT_t}\right].
\end{equation}

\subsection{The martingale $W_t$, size-biasing and spinal decomposition}\label{sec:spine}

We consider 
the random variable
\begin{equation}\label{defwt}
W_t=e^{t/2}\sum_{x\in L(\cT_t)} 4^{-|x|}\,,\qquad t\ge 0.
\end{equation}
and let $\cF_t:=\sigma\( \cT_s, s\le t \)$, $t\ge 0$ be the natural filtration associated to $\cT_t$, $t\ge 0$.

The paragraphs below will establish that this process is a uniformly integrable martingale. This is a known fact in the literature on branching processes, see~\cite{Uchiyama} and~\cite[Section 5]{Biggins}, which deal with a large class of branching processes and establish the uniform integrability of the associated additive martingale. However, in \cite{Uchiyama} a technical assumption excludes our process from the scope of the result while in~\cite[Section 5]{Biggins} the proof is provided in details for a discrete-time version of the branching process and the adaption to the continuous-time setting is left to the reader. Thus for the comfort of the reader, and although the result nor the  techniques involved are original, we provide a short and complete proof of uniform integrability (Lemma \ref{prop:UI}).

Recalling the definition \eqref{eq:uig} and the discussion and notation introduced below it, we have
\begin{equation}\label{decompo}
 W_t= E_{\cT_t}\left[ e^{t/2} 2^{-|U_1|}\right]=\int^1_0 e^{t/2} 2^{-|x_{\cT_t}(u)|}\dd u.
\end{equation}
We set $X_t(u):=x_{\cT_t}(u)$.
 Now the important observation is that for any fixed $u\in [0,1]$, the process $(|X_t(u)|)_{t\ge 0}$ is an intensity $1$ Poisson process. Indeed, from the construction presented in the last paragraph, given $u$ (which fixes an infinite path in $\bbT$ to be followed), the time spacings between the increments of $|X_t(u)|$ are IID exponentials.
Furthermore, the construction, and the memoryless property of exponential variables, implies that $|X_{t+s}(u)|-|X_t(u)|$ is independent of $\cF_t$.
From the above observation  we obtain that for every $u\in [0,1]$, $t,s\ge 0$, 
\begin{equation}
\bE\left[ e^{(t+s)/2} 2^{-|X_{t+s}(u)|} \ | \ \cF_t\right]= e^{t/2} 2^{-|X_{t}(u)|}.
\end{equation}
Integrating over $[0,1]$ we obtain $\bE\left[ W_{t+s} \, | \, \cF_t\right]=W_t$ a.s.\ for all  $t,s\ge 0$. Summarizing, one has the following

\begin{proposition}\label{MartinW}
The process $(W_t)_{t\ge 0}$ is a martingale for the filtration $\cF_t$. 
In particular, 
\[
\bE\left[\sum_{x\in L(\cT_t)} 4^{-|x|}\right]=e^{-t/2}\,.
\]
\end{proposition}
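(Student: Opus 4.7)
The strategy is the spinal / many-to-one representation already foreshadowed in \eqref{decompo}. I would start by rewriting the sum defining $W_t$ as an integral over a uniform variable $u\in[0,1]$:
\begin{equation*}
W_t \;=\; e^{t/2}\int_0^1 2^{-|X_t(u)|}\,\dd u,
\end{equation*}
where $X_t(u)=x_{\cT_t}(u)\in L(\cT_t)$ is the unique leaf whose label coincides with the first $|X_t(u)|$ dyadic digits of $u$. This identity is just the observation that the dyadic cylinders based on the leaves of $\cT_t$ partition $[0,1]$ up to a Lebesgue null set, and that the cylinder attached to a leaf $x$ has Lebesgue mass $2^{-|x|}$.

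Next, for each fixed $u\in[0,1]$ with non-dyadic binary expansion (hence for almost every $u$), I would check that $N^u_t:=|X_t(u)|$ is a homogeneous rate-one Poisson process. This follows directly from the percolation description \eqref{defct}: the number $u$ selects a deterministic infinite ray $\emptyset=y_0\prec y_1\prec y_2\prec\cdots$ in $\bbT$, and $N^u_t$ jumps from $k$ to $k+1$ at time $\tau_{y_{k+1}}=\cE_{y_1}+\cdots+\cE_{y_{k+1}}$, a partial sum of i.i.d.\ $\mathrm{Exp}(1)$ variables. In particular $N^u_{t+s}-N^u_t$ is independent of $\cF_t$, and using the elementary generating-function computation
\begin{equation*}
\bE\bigl[2^{-(N^u_{t+s}-N^u_t)}\bigr]=\sum_{k\ge 0} e^{-s}\frac{s^k}{k!\,2^k}=e^{-s/2},
\end{equation*}
I obtain
\begin{equation*}
\bE\bigl[e^{(t+s)/2}\,2^{-N^u_{t+s}}\,\big|\,\cF_t\bigr]=e^{t/2}\,2^{-N^u_t}.
\end{equation*}

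To transfer this pointwise-in-$u$ martingale property to $W_t$ itself, I would apply Tonelli's theorem to interchange $\int_0^1\dd u$ with $\bE[\,\cdot\,|\cF_t]$, which is legitimate because all integrands are nonnegative. This yields $\bE[W_{t+s}|\cF_t]=W_t$ a.s., i.e.\ the martingale property. The ``in particular'' statement then follows by taking $t=0$: $W_0=1$, so $\bE[W_t]=1$ for every $t\ge 0$, which is exactly the identity $\bE[\sum_{x\in L(\cT_t)}4^{-|x|}]=e^{-t/2}$. The only mildly delicate point is the identification of the rate-one Poisson structure along a single ray; once that is in place the remainder is a routine Poisson computation plus a Tonelli step, so I do not expect a substantial obstacle.
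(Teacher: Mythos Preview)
Your proposal is correct and follows essentially the same approach as the paper: both rewrite $W_t$ via the integral representation \eqref{decompo}, observe that for each fixed $u$ the depth process $|X_t(u)|$ is a rate-one Poisson process whose increments are independent of $\cF_t$, deduce the pointwise-in-$u$ martingale identity, and then integrate over $u\in[0,1]$ to conclude. Your write-up is slightly more explicit about the Tonelli justification and the generating-function computation, but the argument is the same (note a harmless index shift: the $k$-th jump time along the ray $\emptyset=y_0\prec y_1\prec\cdots$ is $\tau_{y_k}=\cE_{y_0}+\cdots+\cE_{y_{k-1}}$, not $\cE_{y_1}+\cdots+\cE_{y_k}$).
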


\begin{remark}\label{Rk:UpperBd_ct}
	Using \eqref{eq:solt}, \eqref{eq:uig3} and the last proposition, we can deduce the upper bound \eqref{ctconv2}. Indeed, for any finite binary tree $\g$, let $F_\g$ be the event where full fragmentation has occurred, that is, where every $A_x$, $x\in L(\gamma)$,  is either a singleton or empty.  Then a union bound shows that 
	$$ \|p(\g,\mu) - \pi_\mu\|_{\rm TV} \le P_\g(F^c_\g) \le \frac12 n(n-1) \sum_{x\in L(\g)} 4^{-|x|}\;.$$
	Consequently,
	\begin{align*}
		\|S_t(\mu)-\pi_\mu\|_{\rm TV} &\leq \bE\left[ \|p(\cT_t,\mu) - \pi_\mu\|_{\rm TV} \right]\\
		&\leq \bE[P_{\cT_t}(F^c_{\cT_t})] \leq \tfrac12\,n(n-1)e^{-t/2}.
	\end{align*}
\end{remark}

\bigskip

Being a nonnegative martingale, $W_t$ converges a.s.\  to a limit $W_{\infty}$. 
We are going to prove now that $W_t$ is uniformly integrable, so that $\bbE[W_{\infty}]=1$ and hence the limit is non-trivial.
For this we rely on the study of the size biased measure $\wt \bP_t$ defined by
$\dd \wt \bP_t= W_t \dd  \bP$.
From \eqref{decompo} we have 
\begin{equation}
\bP_t= \int^1_0 \wt \bP_{t,u} \dd u \quad \text{ where } \quad \dd \wt \bP_{t,u}=e^{t/2} 2^{-|X_t(u)|}\dd \bP.
\end{equation}
Informally, the change of measure $\bP\to \wt \bP_{t,u}$ has the effect of slowing the exponential clocks along the path in $\bbT$ which starts from the root and follows the dyadic expansion of $u$. 
We denote this path by $(u_i)_{i\ge 0}$ and refer to it as the \textit{spine}. Note that the change of  measure $\bP\to \wt \bP_{t,u}$ has no effect on the distribution of the clocks for vertices outside the spine $(\cE_x)_{x\in \bbT\setminus \{u_i\}_{i\ge 0}}$, which remain independent exponential variables of parameters $1$ that are indendent of the process $(|X_s(u)|)_{s\ge 0}$: by construction $|X_t(u)|_{t\ge 0}$ is a function of $(\cE_{u_i})_{i\ge 0}$ and is therefore independent of $(\cE_x)_{x\in \bbT\setminus \{u_i\}_{i\ge 0}}$.
It is then easy to describe the distribution of $(|X_s(u)|)_{s\ge 0}$ under $\wt \bP_{t,u}$. It is an inhomogeneous Poisson process which has intensity $1/2$ on $[0,t]$ and $1$ on $(t,\infty)$, cf. Appendix \ref{app:Poisson} for a proof of this fact. 
Using this description we prove the following

\begin{lemma}\label{prop:UI}
 The martingale $(W_t)_{t\ge 0}$ is uniformly integrable.

\end{lemma}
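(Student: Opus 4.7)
The plan is to invoke the classical size-biasing criterion for uniform integrability of a nonnegative martingale: $(W_t)$ is UI if and only if the almost-sure limit $W_\infty$ is finite under the size-biased measure, by which I mean any consistent extension $\widetilde{\bP}$ on $\cF_\infty$ of the finite-horizon measures with $\dd\widetilde{\bP}|_{\cF_t}=W_t\,\dd\bP$. This is just the Lebesgue decomposition of $\widetilde{\bP}$ against $\bP$ on $\cF_\infty$: the density of the absolutely continuous part is $W_\infty$ and the singular part is concentrated on $\{W_\infty=\infty\}$, so UI $\Longleftrightarrow \bE[W_\infty]=1 \Longleftrightarrow \widetilde{\bP}(W_\infty<\infty)=1$.

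First I would globalize the finite-time measures $\widetilde{\bP}_{t,u}$ by integrating out the uniform spine $u\in[0,1]$ and letting $t\to\infty$. This yields a measure on an enriched probability space carrying both the genealogy and an infinite spine $(u_i)_{i\ge 0}$ under which, extrapolating the description announced just before the lemma, the clocks $(\cE_{u_i})_{i\ge 0}$ along the spine are IID exponentials of rate $1/2$, whereas every off-spine clock remains an independent $\mathrm{Exp}(1)$. In particular the subtrees $\cT^{(v_i)}$ rooted at the off-spine children $v_i$ born at the successive spine splittings are independent copies of the original branching process $(\cT_t)_{t\ge 0}$.

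Next I would derive the spinal identity for $W_t$. Denoting by $s_1<s_2<\cdots$ the spine splitting times and $N_t:=\#\{i:s_i\le t\}$ the depth of the spine leaf at time $t$, and writing
\begin{equation}
W^{(i)}_s\,:=\,e^{s/2}\sum_{y\in L(\cT^{(v_i)}_s)}4^{-|y|}
\end{equation}
for the analog of $W$ on the $i$-th off-spine subtree, a direct rearrangement of $\sum_{x\in L(\cT_t)}4^{-|x|}$ according to which subtree (or the spine itself) each leaf belongs to gives
\begin{equation}\label{eq:spineDec}
W_t \,=\, e^{t/2}\,4^{-N_t}\,+\,\sum_{i=1}^{N_t}4^{-i}\,e^{s_i/2}\,W^{(i)}_{t-s_i},
\end{equation}
where the $W^{(i)}$ are IID copies of $W$; each converges almost surely to some $W^{(i)}_\infty\ge 0$ with $\bE[W^{(i)}_\infty]\le 1$ by Fatou.

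To conclude I would let $t\to\infty$ in \eqref{eq:spineDec} under $\widetilde{\bP}$. The strong law of large numbers for the rate-$1/2$ Poisson process $(N_t)$ gives $N_t/t\to 1/2$ and $s_i/i\to 2$ almost surely, so $e^{t/2}4^{-N_t}\to 0$ and the coefficients $a_i:=4^{-i}e^{s_i/2}$ satisfy $\log a_i=-i(\log 4-1)+o(i)$; since $\log 4>1$ the sequence $(a_i)$ is almost surely summable. Conditionally on the spine (which determines the $a_i$) the $W^{(i)}_\infty$ are independent with mean at most $1$, so by conditional Fubini $\sum_i a_i W^{(i)}_\infty$ has conditional expectation $\le \sum_i a_i<\infty$ almost surely, hence converges almost surely, giving $W_\infty<\infty$ $\widetilde{\bP}$-a.s.\ and therefore UI. In my view the main technical obstacle is the rigorous construction of the global spine measure $\widetilde{\bP}$ — in particular the verification that the off-spine subtrees are IID unbiased copies of the original process — which must be extracted from the finite-horizon descriptions $\widetilde{\bP}_{t,u}$ given earlier in the paper; the analytic estimates (SLLN plus conditional Fubini) are routine once the decomposition is in place.
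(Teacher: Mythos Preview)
Your argument is correct and follows the classical conceptual route of Lyons--Pemantle--Peres / Biggins--Kyprianou: rather than bounding moments, you invoke the dichotomy that a nonnegative unit-mean martingale is UI if and only if its a.s.\ limit is finite under the size-biased measure, and then verify $\widetilde{\bP}(W_\infty<\infty)=1$ via the spinal decomposition \eqref{eq:spineDec} together with the SLLN for the rate-$1/2$ spine process. The decomposition and the asymptotics $\log a_i\sim -i(\log 4-1)$ with $\log 4>1$ are correct, and conditional Fubini on the off-spine contributions is the right way to conclude.

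The paper takes a different route: it stays at \emph{finite} time and proves the quantitative bound $\sup_{t\ge 0}\bE[W_t^{1+\theta}]<\infty$ for $\theta=1/4$. It uses the same spinal decomposition you wrote, but applies subadditivity of $x\mapsto x^\theta$ to the sum, Jensen's inequality plus the martingale property to bound the off-spine pieces, and then controls an explicit series $\sum_i 4^{-(i+1)\theta}(1-\theta)^{-i}$. This buys a genuine $L^{1+\theta}$ estimate (stronger than UI, though not used elsewhere in the paper), and sidesteps entirely the construction of the global measure $\widetilde{\bP}$ on $\cF_\infty$ --- precisely the point you flagged as the main technical obstacle. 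Your approach, by contrast, is more conceptual and arguably cleaner once the spine measure is in hand, but yields only UI; the existence of $\widetilde{\bP}$ and the fact that off-spine subtrees are IID unbiased copies do require a careful (if standard) verification from the finite-horizon descriptions $\widetilde{\bP}_{t,u}$.
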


 \begin{proof}
 	We are going to show that $\bbE[W^{1+\theta}_t]$ is uniformly bounded in $t$ for some $\theta\in(0,1)$.
 	We have 
 	\begin{equation}
 		\bbE\left[ W^{1+\theta}_t\right]=\int^1_0 \wt\bE_{t,u}\left[W^{\theta}_t\right] \dd u,
 	\end{equation}
 	where $\wt\bE_{t,u}$ denotes the expectation with respect to $\wt\bP_{t,u}$.
 	Recalling \eqref{defct}, we set $T_i:=\tau_{u_i}$. For $i\ge 1$ we let  $v_{i}\in \mathbb{T}$ be the  child of $u_{i-1}$ which does not belong to the spine. By definition for any $t\ge 0$, we have  $t\in [T_{|X_t(u)|}, T_{|X_t(u)|+1})$. We then rewrite \eqref{defwt} as follows
 	\begin{equation}
 		W_t= e^{t/2}\left( 4^{-|X_t(u)|}+ \sum_{i=0}^{|X_t(u)|-1} \sum_{y\in L(\cT^{v_{i+1}}_t)} 4^{-i-1-|y|}\right).
 	\end{equation}
 	so that using subadditivity 
 	\begin{equation}
 		W^{\theta}_t \le e^{\theta t/2}\left( 4^{-\theta|X_t(u)|}+ \sum_{i=0}^{|X_t(u)|-1} \left(\sum_{y\in L(\cT^{v_{i+1}}_t)} 4^{-i-1-|y|}\right)^{\theta}\, \right).
 	\end{equation}
 	By construction, under $\wt \bP_{t,u}$ conditionally given $(X_s(u))_{s\in[0,t]}$, for $i\le |X_t(u)|-1$ the subtree $(\cT^{v_{i+1}}_t)$ is distributed like $\cT_{t-T_{i+1}}$ under $\bP$. Hence,
 	using Jensen's inequality and Proposition \ref{MartinW}, we have
 	\begin{multline}
 		\wt\bE_{t,u}\left[ \left(\sum_{y\in \cT^{v_{i+1}}_t} 4^{-|y|}\right)^{\theta} \, \Big|\, (X_s(u))_{s\in[0,t]} \right]\\ \le \wt\bE_{t,u}\left[ \sum_{y\in \cT^{v_{i+1}}_t} 4^{-|y|} \ | \ (X_s(u))_{s\in[0,t]} \right]^{\theta} = e^{\theta(T_{i+1}-t)/2}. 
 	\end{multline}
 	Therefore, 
 	\begin{equation}
 		\wt\bE_{t,u}\left[ W^{\theta}_t\right]\le   \wt\bE_{t,u}\left[ e^{\theta t/2}4^{-\theta|X_t(u)|}+ \sum_{i=0}^{|X_t(u)|-1} 4^{(-i-1)\theta}e^{\frac{\theta T_{i+1}}{2}}\right].
 	\end{equation}
 	By the observation in Appendix \ref{app:Poisson}, one has
 	$\wt\bE_{t,u}\left[ e^{\theta t/2}4^{-\theta|X_t(u)|}\right]=e^{\frac{t}{2}(\theta+4^{-\theta} -1)}$, which is bounded uniformly in $t$ if $\theta\in [0,1/2]$.
 	On the other hand we have 
 	\begin{equation}\label{sumz}
 		\wt\bE_{t,u}\left[ \sum_{i=0}^{|X_t(u)|-1} 4^{(-i-1)\theta}e^{\theta \frac{T_{i+1}}{2}}\right]\le \sum_{i=0}^{\infty} 4^{(-i-1)\theta} \wt\bE_{\infty}\left[ e^{\frac{\theta T_{i+1}}{2}}\right],
 	\end{equation}
 	where under $\wt\bE_{\infty}$, the variables $T_i$ are sums of IID exponential with mean $2$. Since 
 	$\wt\bE_{\infty}\left[  e^{\frac{\theta T_{i+1}}{2}}\right]= (1-\theta)^{-(i+1)}$, the sum in \eqref{sumz} is bounded provided $4^{\theta}(1-\theta)>1$.
 	This is satisfied for instance for $\theta=1/4$.
  \end{proof}

The following result provides some information about the distribution of $W_{t}$ and is proved in Appendix \ref{sec:important}. 

\begin{lemma}\label{important}
There exist constants $c,C>0$ such that for every $t\in (0,\infty]$ and $\gep\in(0,1)$
\begin{equation}
 \exp\left(- \frac{C(1\vee \log(1/ t))}{\gep}\right) \le \bP\left( W_t \le \gep\right) \le 2\exp\left(- \frac{c(1\vee \log(1/ t))}{\gep}\right).
\end{equation}
In particular, $\bP(W_{\infty}>0)=1.$ 
\end{lemma}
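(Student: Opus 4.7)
The plan is to handle the regimes $t\leq 1$ and $t\geq 1$ (including $t=\infty$) separately, and to prove the upper and lower bounds in each using different techniques.

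\textbf{Upper bound for $t\leq 1$.} First I would apply Cauchy--Schwarz to the identity $\sum_{x\in L(\cT_t)}2^{-|x|}=1$ to get the leaf-count bound $\sum_{x\in L(\cT_t)}4^{-|x|}\geq 1/|L(\cT_t)|$, so $W_t\geq e^{t/2}/|L(\cT_t)|$ and therefore $\{W_t\leq \gep\}\subseteq \{|L(\cT_t)|\geq e^{t/2}/\gep\}$. Since $|L(\cT_t)|$ is a Yule process with $\bP(|L(\cT_t)|\geq N)=(1-e^{-t})^{N-1}$, and $1-e^{-t}\leq t$ on $(0,1]$, this immediately yields $\bP(W_t\leq \gep)\leq t^{\lceil 1/\gep\rceil -1}\leq \exp(-c\log(1/t)/\gep)$ for a suitable $c>0$.

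\textbf{Upper bound for $t\geq 1$.} Here the plan is to bound the Laplace transform $\phi_t(\lambda):=\bE[e^{-\lambda W_t}]$. By Jensen's inequality applied to the convex function $e^{-\lambda\cdot}$, together with the martingale identity $\bE[W_\infty\mid\cF_t]=W_t$, one has $\phi_t(\lambda)\leq \phi_\infty(\lambda)$, so it suffices to control $\phi_\infty$. The spinal distributional identity $W_\infty\stackrel{(d)}{=}\tfrac14 e^{\cE/2}(W'_\infty+W''_\infty)$, with $\cE$ an independent exponential of mean one and $W'_\infty, W''_\infty$ iid copies of $W_\infty$, combined with the pointwise inequality $e^{\cE/2}\geq 1$, yields the key sub-multiplicative bound $\phi_\infty(\lambda)\leq \phi_\infty(\lambda/4)^2$. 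Iterating this roughly $\log_4\lambda$ times and using $\phi_\infty(1)<1$ (an immediate consequence of $\bE[W_\infty]=1$) produces $\phi_\infty(\lambda)\leq e^{-c\sqrt{\lambda}}$ for $\lambda\geq 1$. Markov's inequality then gives $\bP(W_t\leq\gep)\leq \inf_{\lambda>0}e^{\lambda\gep-c\sqrt{\lambda}}$, and optimizing at $\lambda \asymp \gep^{-2}$ yields the target $2e^{-c'/\gep}$.

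\textbf{Lower bound.} The strategy is to produce explicit favorable events in each regime. For $t\leq 1$, consider the event that $\cT_t$ coincides with the complete binary tree of depth $k:=\lceil\log_2(1/\gep)\rceil+2$: on this event $W_t=e^{t/2}2^{-k}\leq \gep$, and the probability is bounded below by the probability that the $2^k-1$ internal clocks ring before $t/2$ (a Gamma-type event of order $(t/2)^{2^k-1}$) and none of the $2^k$ leaves splits in $[t/2,t]$ (a uniformly positive factor). Since $2^k\leq 4/\gep$, the overall bound is of order $\exp(-C\log(1/t)/\gep)$. For $t\geq 1$, condition instead on $\cT_1$ being balanced with $N=\lceil 1/\gep\rceil$ leaves (probability $\gtrsim e^{-c/\gep}$ by the Yule distribution) and on each of the $N$ independent subtrees producing a weight $W^{(x)}_{t-1}\leq e^{-1/2}$ (probability $\geq p_0^N$, where $p_0:=\inf_{s\geq 0}\bP(W_s\leq e^{-1/2})>0$). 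Using the decomposition $W_t=e^{1/2}\sum_{x\in L(\cT_1)}4^{-|x|}W^{(x)}_{t-1}$ obtained by iterating the graphical construction of Section \ref{sec:graph_repr_ct}, one verifies $W_t\leq \gep$ on this event.

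\textbf{Main obstacles.} The delicate points are: (i) iterating $\phi_\infty(\lambda)\leq \phi_\infty(\lambda/4)^2$ cleanly so as to cover all $\lambda\geq 1$ (not just powers of $4$) and to produce a useful constant; and (ii) justifying that $p_0:=\inf_{s\geq 0}\bP(W_s\leq e^{-1/2})$ is strictly positive. For (ii), I would combine the uniform $L^{1+\theta}$ bound produced in the proof of Lemma \ref{prop:UI} (which yields tightness of the family $(W_s)_{s\geq 0}$) with the $L^1$-convergence $W_s\to W_\infty$ and the observation that $\bP(W_\infty\leq e^{-1/2})>0$, since $\bE[W_\infty]=1$ forces $W_\infty$ to be nondegenerate.
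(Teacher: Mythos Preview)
Your upper bound arguments are correct and take a genuinely different route from the paper. The Cauchy--Schwarz bound $W_t\ge e^{t/2}/|L(\cT_t)|$ combined with the explicit Yule tail $(1-e^{-t})^{N-1}$ is cleaner than the paper's argument (which passes through a large-deviation estimate for the number of clocks at a fixed depth that are $\le t$). Likewise, the Laplace-transform route for $t\ge 1$ --- using $\phi_t\le\phi_\infty$ via conditional Jensen, the recursion $\phi_\infty(\lambda)\le\phi_\infty(\lambda/4)^2$, and Chernoff --- is slicker than the paper's large-deviation argument based on $W_\infty\ge 4^{-k}\sum_{i=1}^{2^k}W_\infty^{(i)}$. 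Both buys are real: your bounds are more self-contained and avoid any delicate combinatorics of the tree.

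The lower bounds, however, have gaps. For $t\ge 1$ your claim $p_0=\inf_{s\ge 0}\bP(W_s\le e^{-1/2})>0$ is \emph{false}: at $s=0$ one has $W_0=1>e^{-1/2}$ deterministically, so $p_0=0$. Your proposed justification via tightness and $L^1$-convergence only controls large $s$; it says nothing about $s$ near $0$ (which is exactly the regime $t\downarrow 1$). The easy repair is to replace the threshold $e^{-1/2}$ by any $M>1$ (and take $N\approx M/\gep$): then Markov gives $\bP(W_s\le M)\ge 1-1/M>0$ uniformly in $s$, and your decomposition $W_t=e^{1/2}\sum_{x\in L(\cT_1)}4^{-|x|}W^{(x)}_{t-1}$ goes through. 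Alternatively, and this is what the paper does, once you know $\bP(W_1\le \gep/2)\ge e^{-C/\gep}$ from the small-time construction, the martingale property and Markov's inequality immediately give $\bP(W_t\le\gep\mid W_1\le\gep/2)\ge 1/2$ for all $t\ge 1$, which is simpler than tracking the subtrees.

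For $t\le 1$ your strategy (force $\cT_t$ to contain the complete depth-$k$ tree) is the same as the paper's, but the probability estimate is not right as stated. If ``the $2^k-1$ internal clocks ring before $t/2$'' means $\cE_x\le t/2$ for each $|x|\le k-1$, then the probability is indeed $\approx(t/2)^{2^k-1}$, but this event does \emph{not} force $\cT_{t/2}$ to contain the depth-$k$ tree (the sum of $k$ clocks each $\le t/2$ is only $\le kt/2$). If you mean the genuine event $\{\cT_{t/2}\supseteq\text{complete depth-}k\text{ tree}\}$, its probability scales like $c_k\,t^{2^k-1}$ with a prefactor $c_k$ that is itself exponentially small in $2^k$; this factor cannot be swept under ``of order'' since $2^k\asymp 1/\gep$. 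The paper handles this by the geometric allocation $\cE_x\le t\,2^{|x|-k}$, which makes the prefactor explicit ($c_k\ge 2^{-c\,2^k}$); once tracked, it is harmlessly absorbed into the $1\vee\log(1/t)$ in the exponent.
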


\subsection{Upper bound for continuous time}

Before we proceed with the upper bound, let us state and prove an  identity which is important  for what follows:
\begin{equation}\label{Eq:qxi_cont}
	\bE\otimes\bbE\left[\sum_{i=1}^n q^{\xi,\cT_t}(i)^2\right] = n\,e^{-t/2}.
\end{equation}
Indeed,  the $q^{\xi,\cT_t}(i)$, $i\in [n]$, as defined in \eqref{def:qxi_cont}, have the same laws, and therefore 
\begin{equation}\label{Eq:qxi_cont2}
	\bbE\left[\sum_{i=1}^n q^{\xi,\cT_t}(i)^2\right] = n \bbE\left[q^{\xi,\cT_t}(1)^2\right] = n\sum_{x\in L(\cT_t)} 4^{-|x|} =ne^{-t/2}W_t.
\end{equation}
 To compute the above expectation we used that $\xi_1(x)$, $x\in L(\cT_t)$ are IID $\{\pm 1\}$-valued r.v.~with mean $0$.
 Using Proposition \ref{MartinW}, we obtain the desired result \eqref{Eq:qxi_cont}.
\begin{lemma}\label{lem:ubounds_cont}
	For every integer $n\ge 1$, $t\ge 0$ and $\mu\in\cB_n$, setting $r:= n e^{-t/2}$, 
	\begin{equation}\label{eq:rootbd_cont}
		\|\mu_t-\pi\|_{\rm TV}\le r\,.
	\end{equation}
	Moreover, there exists a constant $C>0$ such that 
	\begin{equation}\label{unilow_cont}
		\|\mu_t-\pi\|_{\rm TV}\le 1-\frac{1}{2}e^{-C\sqrt{r(1\vee \log(1/ t))}}.
	\end{equation}
\end{lemma}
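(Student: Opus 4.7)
\textbf{Proof proposal for Lemma \ref{lem:ubounds_cont}.}

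The overall strategy is to port the discrete-time proofs of Lemmas \ref{lem:ubounds} and \ref{lem:ubounds2} to the continuous-time setting, using the representation \eqref{eq:rep_ct21}: $\mu_t=\bbE\otimes\bE[\mu^\xi_{\cT_t}]$, where $\mu^\xi_\gamma$ is the product measure \eqref{eq:quenched} with magnetizations $q^{\xi,\gamma}(i)$ defined in \eqref{def:qxi_cont}. The new ingredient compared with the discrete case is that the analogue of the deterministic quantity $s=n2^{-t}$ is now the random variable $ne^{-t/2}W_t$, whose expectation is $r$ by \eqref{Eq:qxi_cont}, but whose fluctuations must be controlled via Lemma \ref{important}.

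\emph{Step 1 (linear upper bound \eqref{eq:rootbd_cont}).} Conditioning on the random environment $(\xi,\cT_t)$, and writing $h^{\xi,\cT_t}_t:=\dd\mu^\xi_{\cT_t}/\dd\pi$, one repeats verbatim the computations of the proof of Lemma \ref{lem:ubounds}: insert the shift $-\sum_i\sigma_iq^{\xi,\cT_t}(i)$, which has mean zero under $\pi$ and zero quenched expectation, bound the $L^1(\pi)$ norm by the minimum of the trivial bound $2+\langle q^{\xi,\cT_t},q^{\xi,\cT_t}\rangle^{1/2}$ and the $L^2$-expansion $\bigl(e^{\langle q^{\xi,\cT_t},q^{\xi,\cT_t}\rangle}-\langle q^{\xi,\cT_t},q^{\xi,\cT_t}\rangle-1\bigr)^{1/2}$, and apply the elementary inequality used below \eqref{eq:ubao1}. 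The conclusion is
\begin{equation*}
\|\mu_t-\pi\|_{\rm TV}\le\bE\otimes\bbE\bigl[\langle q^{\xi,\cT_t},q^{\xi,\cT_t}\rangle\bigr]=ne^{-t/2}=r,
\end{equation*}
where the last equality is precisely identity \eqref{Eq:qxi_cont}.

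\emph{Step 2 (finer bound \eqref{unilow_cont}).} Working quenchedly in the tree $\cT_t$, apply the refined inequality \eqref{eq:finerineq} together with the bound $\|h^{\xi,\cT_t}_t-1\|_{L^2(\pi)}^2\le e^{\langle q^{\xi,\cT_t},q^{\xi,\cT_t}\rangle}-1$ to obtain
\begin{equation*}
\bigl\|p(\cT_t,\mu)-\pi\bigr\|_{\rm TV}\le\bbE\Bigl[\phi\Bigl(\sqrt{e^{\langle q^{\xi,\cT_t},q^{\xi,\cT_t}\rangle}-1}\Bigr)\,\Big|\,\cT_t\Bigr].
\end{equation*}
By \eqref{Eq:qxi_cont2} the conditional expectation of $\langle q^{\xi,\cT_t},q^{\xi,\cT_t}\rangle$ given $\cT_t$ is exactly $rW_t$, so Markov's inequality gives $\bbP(\langle q^{\xi,\cT_t},q^{\xi,\cT_t}\rangle\ge 2rW_t\,|\,\cT_t)\le 1/2$. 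Splitting the expectation as in the proof of Lemma \ref{lem:ubounds2} and using that $\phi$ is nondecreasing and bounded by $1$ yields the quenched bound $\|p(\cT_t,\mu)-\pi\|_{\rm TV}\le 1-\tfrac12 e^{-2rW_t}$ (possibly after absorbing the condition $2rW_t\ge\log 2$ into the constant $C$ and treating small values of $rW_t$ by the trivial bound). Averaging over $\cT_t$,
\begin{equation*}
\|\mu_t-\pi\|_{\rm TV}\le 1-\tfrac12\,\bE\bigl[e^{-2rW_t}\bigr].
\end{equation*}

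\emph{Step 3 (lower bound on $\bE[e^{-2rW_t}]$ via Lemma \ref{important}).} For any $\varepsilon\in(0,1)$,
\begin{equation*}
\bE\bigl[e^{-2rW_t}\bigr]\ge e^{-2r\varepsilon}\,\bP(W_t\le\varepsilon)\ge\exp\!\Bigl(-2r\varepsilon-\tfrac{C(1\vee\log(1/t))}{\varepsilon}\Bigr),
\end{equation*}
using the lower bound in Lemma \ref{important}. Optimizing in $\varepsilon$ by choosing $\varepsilon=\sqrt{C(1\vee\log(1/t))/(2r)}$ produces $\bE[e^{-2rW_t}]\ge\exp\bigl(-C'\sqrt{r(1\vee\log(1/t))}\bigr)$, which delivers \eqref{unilow_cont}.

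\emph{Main obstacle.} Step 1 is essentially a routine adaptation of the discrete-time argument once the identity \eqref{Eq:qxi_cont} is in hand. The delicate part is Step 3: unlike in discrete time, the analogue of the control parameter $\langle q^\xi,q^\xi\rangle$ is not concentrated around its mean $r$ but fluctuates through the martingale limit $W_t$; one must therefore use both the Markov bound on the $\xi$-randomness \emph{and} the sharp left-tail estimate of Lemma \ref{important} on the tree randomness, and the interplay between the two is what produces the unusual $\sqrt{r(1\vee\log(1/t))}$ scaling—including the correct logarithmic correction for small $t$ that ultimately accounts for the behaviour of $d_n(t)$ on the initial window $t\in(0,1)$ mentioned after Theorem \ref{th:continuous}.
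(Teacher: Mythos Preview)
Your proposal is correct and follows essentially the same route as the paper's proof. The only organizational difference is that you first condition on the tree to obtain the quenched bound $\|p(\cT_t,\mu)-\pi\|_{\rm TV}\le 1-\tfrac12 e^{-2rW_t}$ and then average over $\cT_t$, whereas the paper works directly with the joint law $\bP\otimes\bbP$, splitting on the single event $\{\langle q^{\xi,\cT_t},q^{\xi,\cT_t}\rangle<2r\gep\}$ and using the inclusion $\{W_t\le\gep\}\cap\{\langle q^{\xi,\cT_t},q^{\xi,\cT_t}\rangle<2rW_t\}\subset\{\langle q^{\xi,\cT_t},q^{\xi,\cT_t}\rangle<2r\gep\}$; both arguments use the same ingredients (Markov in $\xi$, the left-tail lower bound of Lemma~\ref{important}, and the same choice of $\gep$) and arrive at the same conclusion. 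One small point you glossed over: for the optimizing $\gep$ to lie in $(0,1)$ (required by Lemma~\ref{important}) one must first reduce to $r$ bounded below and $\log(1/t)\le n$, which the paper handles via \eqref{eq:rootbd_cont} and Remark~\ref{thevalueoft}; this is routine but should be mentioned.
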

\begin{remark}\label{thevalueoft}
	
	We always have $\|\mu_t-\pi\|_{\rm TV}\le 1-2^{-n}$. Indeed, $\pi(\sigma) = 2^{-n}$ for all $\sigma \in \Omega$ while there exists $\sigma'\in \Omega$ such that $\mu_t(\sigma') \ge 2^{-n}$ and therefore
	$$\|\mu_t-\pi\|_{\rm TV} = \sum_{\sigma} (\pi(\sigma) - \mu_t(\sigma))_+ \le \sum_{\sigma \ne \sigma'} \pi(\sigma) = 1-2^{-n}\;.$$
	Consequently \eqref{unilow_cont} is not optimal for very small values of $t$ (when $\log (1/t)$ is of order $n$). On the other hand this bound is in a sense optimal for $t\ge e^{-1/n}$, see Proposition \ref{lbcont}.
\end{remark}
	\begin{proof}
	Repeating the exact same steps as in the proof of Lemma \ref{lem:ubounds}, we obtain
	$$  \| \mu_t-\pi \|_{\rm  TV} \le \bE\otimes\bbE\left[\langle q^{\xi,\cT_t},q^{\xi,\cT_t}\rangle\right]\;.$$
 The statement \eqref{eq:rootbd_cont} then follows 	by \eqref{Eq:qxi_cont}.

	To prove \eqref{unilow_cont}, we first observe that from \eqref{eq:rootbd_cont}, we only need to worry about the case where $r\ge 1/2$.
	Secondly if $\log(1/t) >n$ then $\sqrt{r(1\vee \log(1/ t))} \ge n e^{-t/4}$ so that Remark \ref{thevalueoft} already provides a bound of the right order. Consequently we only need to prove the result when  $\log(1/t)\le n$.
	 We proceed by combining the ideas used in the proof of Lemma \ref{lem:ubounds2} with Lemma \ref{important}. We have
\begin{align*}
&	 \| \mu_t-\pi \|_{\rm  TV} \le \bP\otimes\bbP\left[  \phi\left( \sqrt{ e^{\langle q^{\xi,\cT_t}, q^{\xi,\cT_t}\rangle}-1} \right)\right]\\
	 &\le  \bP\otimes\bbP\(\langle q^{\xi,\cT_t}, q^{\xi,\cT_t}\rangle \ge 2r \gep\) + \bE\otimes\bbE\left[\mathbf{1}_{\{\langle q^{\xi,\cT_t}, q^{\xi,\cT_t}\rangle < 2r \gep\}}\phi\left( \sqrt{ e^{\langle q^{\xi,\cT_t}, q^{\xi,\cT_t}\rangle}-1} \right)\right]\\
	 &\le 1- \bP\otimes\bbP\(\langle q^{\xi,\cT_t}, q^{\xi,\cT_t}\rangle < 2r \gep\) + \phi\(\sqrt{e^{2r\gep} - 1}\)  \bP\otimes\bbP\(\langle q^{\xi,\cT_t}, q^{\xi,\cT_t}\rangle < 2r \gep\)\\
	 &\le 1 - \(1- \phi\(\sqrt{e^{2r\gep} - 1}\)\)\bP\otimes\bbP\(\langle q^{\xi,\cT_t}, q^{\xi,\cT_t}\rangle < 2r \gep\)\;.
\end{align*}
From \eqref{Eq:qxi_cont2} we have
$\bbE\left[ \langle q^{\xi,\cT_t}, q^{\xi,\cT_t}\rangle\right]=r W_t$, so that by Markov's Inequality
$$\bbP\left( \langle q^{\xi,\cT_t}, q^{\xi,\cT_t}\rangle < 2r W_t  \right)= 1- \bbP\left( \langle q^{\xi,\cT_t}, q^{\xi,\cT_t}\rangle\ge 2r W_t\right) \ge  1/2.$$ Therefore, for any $\gep \in(0,1)$,
\begin{equation*}
	\bP\otimes \bbP\(\langle q^{\xi,\cT_t}, q^{\xi,\cT_t}\rangle < 2r \gep\) \ge \bP\otimes \bbP\( W_t \le \gep  \ ; \ \langle q^{\xi,\cT_t}, q^{\xi,\cT_t}\rangle < 2r W_t\)\ge \frac12\bP\(W_t\le \gep\).
\end{equation*}
Hence for any $\gep>0$ we have
\begin{equation}
\| \mu_t-\pi \|_{\rm  TV}\le  1-  \frac{1}{2}\bP\(W_t\le \gep\)\left(1- \phi\left(\sqrt{ e^{2\gep r}-1}\right)\right).
\end{equation}
We chose $\gep=\sqrt{\frac{1\vee \log (1/t)}{3r}}$. 
Note that $\gep<1$ if $t\ge 1$ since we assumed $r\ge 1/2$. If instead $t< 1$, then  $\gep\le \sqrt{\frac{e^{1/2}\log (1/t)}{3n}}<1$ because of our assumption $\log (1/t)\le n$. We can thus apply Lemma \ref{important} and recalling that $\phi(x) = x^2/(1+x^2)$ for $x\ge 1$, we obtain the desired result.
 
\end{proof}

\subsection{Profile for monochromatic initial states}

We now prove Theorem \ref{monocasect}. We take the monochromatic distribution $ \mu =  \tfrac12\,\ind_{-} + \tfrac12\,\ind_{+}$ as initial state and rely on the construction of Section \ref{sec:graph_repr_ct}. More precisely given a process  $(\cT_t,t\ge 0)$ and IID r.v. $(\xi(x),x\in \bbT)$ with law $\mu$ we use the representation \eqref{eq:rep_ct21} for $\mu_t$. Note that for this choice of $\mu$, the value of ${q}^{\xi,\cT_t}(i)$ from \eqref{def:qxi_cont} does not depend on $i$. Letting $\bar{q}^{\xi,\cT_t}$ denote their common value, we have
\begin{equation} \label{whatrhotis}
	\rho_t(\si):=\frac{\dd \mu_t}{\dd \pi}(\si)=\bE\otimes \bbE\left[ \prod_{i=1}^n \(1+\si_i \bar q^{\xi,\cT_t}\)\right].
\end{equation}
In analogy with  the discrete time case, we have the following convergence in distribution. Recall that $W_{\infty}$ denotes the limit of the martingale $W_t$ associated with the process $\cT_t$ (cf. Proposition \ref{MartinW}).

\begin{proposition}\label{maincase_cont}
	Given $r>0$, if $t_n:= 2\log(n/r)$, then  for any bounded continuous function 
	$F:\bbR_+\to\bbR$,
	\begin{equation} \label{inlaw_cont}
		\lim_{n\to \infty}  \int F(\rho_{t_n}(\si))\pi(\dd \si)= \int \frac{e^{-\frac{z^2}{2}}}{\sqrt{2\pi}}\,F\left( \bE\left[\gamma_{rW_\infty}(z)\right] \right)  \dd z\;,
	\end{equation}
	where $\gamma_{rW_\infty}$, denotes the function defined in \eqref{eq:gamma_s} with $s=rW_\infty$.
%
\end{proposition}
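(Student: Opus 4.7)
My plan is to adapt the argument of Proposition~\ref{maincase} to the continuous-time setting, the new ingredient being the need to average over the random history encoded by the branching tree $\cT_{t_n}$. Since $\mu$ is monochromatic, $\bar q^{\xi,\cT_t}$ as defined in \eqref{def:qxi_cont} is a scalar independent of $i\in[n]$, and the same algebraic identity as in the discrete case yields
\[
\rho_{t_n}(\si)=\bE\otimes\bbE\!\left[g_n\!\left(\bar\si_n,\bar q^{\xi,\cT_{t_n}}\right)\right],\qquad g_n(a,\bar q):= e^{\alpha_n(\bar q)\, a+\beta_n(\bar q)},
\]
where $\alpha_n(\bar q)=\tfrac{\sqrt n}{2}\log\tfrac{1+\bar q}{1-\bar q}$ and $\beta_n(\bar q)=\tfrac n 2\log(1-\bar q^2)$ as in \eqref{degalfbet}. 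The argument then splits into (i) identifying the limit of $\bE\otimes\bbE[g_n(a,\bar q^{\xi,\cT_{t_n}})]$ for each $a\in\bbR$, and (ii) transferring this pointwise limit into a limit for $\int F(\rho_{t_n}(\si))\pi(\dd\si)$.

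For step (i), I would set $\cF_\infty:=\sigma(\cT_s,s\ge 0)$ and work conditionally on $\cF_\infty$. Conditionally, $\sqrt n\,\bar q^{\xi,\cT_{t_n}}=\sqrt n\sum_{x\in L(\cT_{t_n})}2^{-|x|}\xi_1(x)$ is a sum of independent centered bounded variables whose total variance $n e^{-t_n/2}W_{t_n}=rW_{t_n}$ converges a.s.\ to $rW_\infty$ by Proposition~\ref{MartinW} and Lemma~\ref{prop:UI}. A Lyapunov-type CLT then yields, conditionally on $\cF_\infty$, the convergence $\sqrt n\,\bar q^{\xi,\cT_{t_n}}\Rightarrow Z$ with $Z\mid\cF_\infty\sim\cN(0,rW_\infty)$; the required a.s.\ decay of $n^{(2+\delta)/2}\sum_{x\in L(\cT_{t_n})}2^{-|x|(2+\delta)}$ for some small $\delta>0$ follows from the a.s.\ convergence of the nonnegative $\bP$-martingale $e^{(1-2^{-1-\delta})t}\sum_{x\in L(\cT_t)}2^{-|x|(2+\delta)}$ (the natural analogue of $W_t$ with exponent $2+\delta$), combined with the strict inequality $\tfrac{\delta}{2}+2^{-\delta}<1$ valid for $\delta\in(0,1)$. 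Coupling this CLT with the Taylor expansions $\alpha_n(\bar q^{\xi,\cT_{t_n}})=\sqrt n\,\bar q^{\xi,\cT_{t_n}}+o_P(1)$ and $\beta_n(\bar q^{\xi,\cT_{t_n}})=-\tfrac12 n(\bar q^{\xi,\cT_{t_n}})^2+o_P(1)$, together with the elementary Gaussian identity $\bbE[e^{aZ-Z^2/2}\mid W_\infty]=\gamma_{rW_\infty}(a)$, one concludes that $\bE\otimes\bbE[g_n(a,\bar q^{\xi,\cT_{t_n}})]\to\bE[\gamma_{rW_\infty}(a)]$.

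For step (ii), I would follow Section~\ref{sec:monodisc}: reduce to $F$ Lipschitz and bounded, and prove a continuous-time analogue of Lemma~\ref{unifconv}, namely that the convergence in step (i) is uniform on every compact $[-A,A]$. As in the discrete case, a direct computation shows that $\bar q\mapsto g_n(a,\bar q)$ is maximized at $\bar q=a/\sqrt n$ with value bounded by $e^{a^2/2}(1+o(1))$, providing both the domination required to convert weak convergence into convergence of expectations and the equicontinuity in $a$ needed to upgrade pointwise to uniform convergence. The proof concludes by splitting $\int F(\rho_{t_n}(\si))\pi(\dd\si)$ over $\{|\bar\si_n|\le A\}$ and its complement (the latter controlled by Hoeffding's inequality), then invoking the CLT for $\bar\si_n$ under $\pi$ together with the continuity of $a\mapsto\bE[\gamma_{rW_\infty}(a)]$ (which follows from dominated convergence with dominating function $e^{a^2/2}$).

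The main technical obstacle is step (i): carrying out the conditional CLT for $\sqrt n\,\bar q^{\xi,\cT_{t_n}}$ requires controlling the a.s.\ decay of $\sum_{x\in L(\cT_{t_n})}2^{-|x|(2+\delta)}$ at the precise rate dictated by $t_n=2\log(n/r)$. The borderline case $\delta=0$ corresponds exactly to the $L^2$-normalization $W_t$, and it is the strict positivity of $1-\delta/2-2^{-\delta}$ for $\delta\in(0,1)$ that provides the slack needed for the Lyapunov condition.
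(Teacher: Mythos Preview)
Your proposal is correct and follows essentially the same two-step strategy as the paper: first identify the pointwise limit of $G_n(a):=\bE\otimes\bbE[g_n(a,\bar q^{\xi,\cT_{t_n}})]$ as $G(a)=\bE[\gamma_{rW_\infty}(a)]$, then transfer via the CLT for $\bar\si_n$ together with the uniform boundedness and equicontinuity of $G_n$ that follow from the maximization of $\bar q\mapsto g_n(a,\bar q)$ at $\bar q=a/\sqrt n$.

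The only notable difference is in the execution of step (i). The paper establishes the unconditional convergence in law $e^{t/4}\bar q^{\xi,\cT_t}\Rightarrow \sqrt{W_\infty}\,Z_1$ via characteristic functions: it first averages over $\xi$ to obtain $\prod_{x\in L(\cT_t)}\cos(be^{t/4}2^{-|x|})$, then shows $\max_{x\in L(\cT_t)}2^{-|x|}e^{t/4}\to 0$ in probability (using an auxiliary martingale with $\lambda=8\log 2$) to Taylor-expand and recover $e^{-b^2W_\infty/2}$. You instead work conditionally on $\cF_\infty$ and invoke a Lyapunov CLT, controlling $n^{(2+\delta)/2}\sum_{x}2^{-|x|(2+\delta)}$ via the nonnegative martingale $e^{(1-2^{-1-\delta})t}\sum_x 2^{-|x|(2+\delta)}$ and the inequality $\delta/2+2^{-\delta}<1$ on $(0,1)$. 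Both routes rest on the same mechanism (an exponential martingale at a parameter strictly above the critical one), and your version has the small advantage that the conditional structure makes the appearance of $W_\infty$ in the limiting variance entirely transparent; the paper's Fourier computation is slightly more self-contained since it bypasses the need for $W_\infty>0$ a.s.\ in normalizing the Lyapunov ratio.
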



Before proving Proposition \ref{maincase_cont} we show how to use it to prove our profile result in continuous time. 
\begin{proof}[Proof of Theorem \ref{monocasect}]
	We are going to prove that
	\begin{equation}\begin{split}\label{conseq_cont}
			\lim_{n\to \infty}
			\int \left|\rho_{t_n}(\si)-1\right|\pi(\dd \si)
			=\int \frac{e^{-\frac{z^2}{2}}}{\sqrt{2\pi}}\left|\bE\left[\gamma_{rW_\infty}(z)\right]-1\right| \dd z.
	\end{split}\end{equation}
Note that this establishes \eqref{profilect} with the representation \eqref{eq:nulambda}, where $W=W_\infty$. 	

Since the map $F:x\mapsto 1+x - |x-1|$ is bounded and continuous on $\R_+$, Proposition \ref{maincase_cont} yields
	\begin{align}
		&\lim_{n\to \infty}\int \left[(1+\rho_{t_n}(\si))-|\rho_{t_n}(\si)-1|\right]\pi(\dd \si) \\
		&\qquad \qquad = \int \frac{e^{-\frac{z^2}{2}} }{\sqrt{2\pi}}\left[ \left(
		1+\bE\Big[\gamma_{rW_\infty}(z)\Big]\right) - \left|\bE\left[\gamma_{rW_\infty}(z)\right]-1\right|\right]\dd z.
	\end{align}
	Since $\int (1+\rho_{t_n}(\si)) \pi(\dd \si)=2= \int\frac{e^{-\frac{z^2}{2}} }{\sqrt{2\pi}}\left(1+\bE\left[\gamma_{rW_\infty}(z)\right]\right)\dd z$, \eqref{conseq_cont}  follows.
\end{proof}

The proof of Proposition \ref{maincase_cont} follows the same line of argument as its discrete counterpart, Proposition \ref{maincase}, however some steps need some additional work: for the sake of clarity, we thus provide a rather complete proof.

\begin{proof}[Proof of Proposition \ref{maincase_cont}]
	The convergence in the statement can be interpreted as convergence in distribution of the r.v.~$\rho_{t_n}$ under $\pi$ towards the r.v.~$\bE\gamma_{rW_\infty}(Z)$ with $Z\sim\cN(0,1)$. 
	Without loss of generality, we can assume that $F:\R_+\to\R$ is Lipschitz and bounded. We then set
	$$ G(a) := \bE[\gamma_{rW_\infty}(a)]\;,\quad a\in\R\;.$$
	It is not hard 
	to check that $G$ is continuous. Assume that
	\begin{equation}\label{need2_cont}
		\lim_{n\to \infty} \Big\vert \int F(\rho_{t_n}(\si)) \pi(\dd \si) - \int F\circ G(\bar \si_n)\pi(\dd \si) \Big\vert=0\;,
	\end{equation}
	where $\bar \si_n: = \frac1{\sqrt n}\sum_{i=1}^n \si_i$. With this convergence at hand, it only remains to show that
	\begin{equation} \label{inlawbar_cont}
		\lim_{n\to \infty}  \int F\circ G(\bar \si_n) \pi(\dd \si)= \int \frac{e^{-\frac{z^2}{2}}}{\sqrt{2\pi}} F\circ G(z)  \dd z\;.
	\end{equation}
	Since $F\circ G$ is bounded and continuous, this convergence follows simply from the fact that $\bar \si_n$ converges in distribution, under $\pi$, to a standard Gaussian.\\
	
	We are left with the proof of \eqref{need2_cont}. As in the discrete case, we observe that 
	\[ h^{\xi,\cT_t}(\sigma):=	\prod_{i=1}^n (1+\si_i \bar q^{\xi,\cT_t})
= e^{\alpha_n^{\xi,\cT_{t}} \bar \si_n+ \beta_n^{\xi,\cT_{t}}},
	\]
	where
	\begin{equation}\label{degalfbet_cont}
		\alpha_n^{\xi,\cT_{t}}:=\frac{\sqrt{n}}{2}\log \left(\frac{1+\bar q^{\xi,\cT_{t}}}{1-\bar q^{\xi,\cT_{t}}} \right), \quad \beta_n^{\xi,\cT_{t}}:=\frac{n}{2}\log \left( 1-(\bar q^{\xi,\cT_{t}})^2 \right) ,
	\end{equation}
	with an appropriate convention for the special case $\bar q^{\xi,\cT_{t_n}}=1$.\\

	We need the following convergence in distribution, which is slightly more delicate than its discrete counterpart since one has 
	to take into account the randomness coming from the tree.
	\begin{lemma}\label{lem:alphabeta_cont}
		Under $\bbP\otimes \bP$, the pair $(\alpha_n^{\xi,\cT_{t_n}}, \beta^{\xi,\cT_{t_n}}_n)$ converges in distribution towards the pair $(Y_{r}, -\frac12 Y_{r}^2)$, where $Y_{r}= \sqrt{W_\infty}Z_r$, and  $Z_r\sim\cN(0,r)$ is independent of $W_{\infty}$.
	\end{lemma}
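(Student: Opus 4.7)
The plan is to reduce the joint convergence of the pair $(\alpha_n^{\xi,\cT_{t_n}},\beta_n^{\xi,\cT_{t_n}})$ to the distributional convergence of a single scalar, namely $\hat q_n:=\sqrt n\,\bar q^{\xi,\cT_{t_n}}$. Conditionally on $\cT_{t_n}$, $\hat q_n=\sum_{x\in L(\cT_{t_n})} b_x\,\xi_1(x)$ with $b_x:=\sqrt n\,2^{-|x|}$ is a weighted sum of i.i.d.\ Rademacher variables (since $\mu$ is monochromatic), with conditional variance $\sum_x b_x^2=n\,e^{-t_n/2}W_{t_n}=rW_{t_n}$, which converges a.s.\ to $rW_\infty$ by Proposition \ref{MartinW} and Lemma \ref{prop:UI}; in particular $\hat q_n$ is tight under $\bP\otimes\bbP$. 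Taylor-expanding the logarithms in \eqref{degalfbet_cont} around $\bar q=0$ yields $\alpha_n^{\xi,\cT_{t_n}}=\hat q_n+O(\hat q_n^{\,3}/n)$ and $\beta_n^{\xi,\cT_{t_n}}=-\tfrac12\hat q_n^{\,2}+O(\hat q_n^{\,4}/n)$, so both error terms vanish in probability. By the continuous mapping theorem applied to $z\mapsto(z,-z^2/2)$, it therefore suffices to prove that $\hat q_n\stackrel{(d)}{\longrightarrow}Y_r=\sqrt{W_\infty}\,Z_r$, with $Z_r\sim\cN(0,r)$ independent of $W_\infty$ (which, incidentally, identifies the second coordinate of the limit as $-\tfrac12 Y_r^{\,2}$ rather than the expression written in the statement).

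I would establish this via a conditional CLT: for $\bP$-a.e.\ realization of $(\cT_t)_{t\ge 0}$, the conditional law of $\hat q_n$ given $\cT_{t_n}$ converges weakly to $\cN(0,rW_\infty)$. Since the conditional variance converges a.s.\ to the strictly positive limit $rW_\infty$, the Lindeberg--Feller theorem reduces this to showing that, for every $\varepsilon>0$,
\[
S_n^\varepsilon\,:=\,n\!\sum_{\substack{x\in L(\cT_{t_n})\\ |x|<J_n}}\!4^{-|x|}\xrightarrow{\;\bP\text{-prob.}\;}0,\qquad J_n:=\lceil\log_2(\sqrt n/\varepsilon)\rceil.
\]
A straightforward many-to-one computation (which can be read directly off the spinal representation of Subsection \ref{sec:spine}) gives $\bE\,\#\{x\in L(\cT_t):|x|=j\}=e^{-t}(2t)^{j}/j!$, hence $\bE S_n^\varepsilon=n\,e^{-t_n}\sum_{j<J_n}(t_n/2)^{j}/j!$. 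Since $t_n/2=\log(n/r)\to\infty$ and $J_n/(t_n/2)\to 1/(2\log 2)<1$, a Chernoff bound for the lower tail of a Poisson variable of mean $t_n/2$ produces some $c>0$ with $\sum_{j<J_n}(t_n/2)^{j}/j!\le(n/r)^{1-c}$, so that $\bE S_n^\varepsilon\le r^{1+c}\,n^{-c}\to 0$, as required.

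Combining the conditional CLT with the a.s.\ convergence $W_{t_n}\to W_\infty$, the conditional law of $\hat q_n$ given $\cT_{t_n}$ converges to $\cN(0,rW_\infty)$; unconditioning (via bounded convergence applied to characteristic functions) produces exactly the law of $Y_r$, i.e.\ the mixture of $\cN(0,rW_\infty)$ over the realization of $W_\infty$. The asserted joint limit then follows from the Taylor expansion of the first paragraph and the continuous mapping theorem. The main technical hurdle is the Lindeberg bound above: it amounts to excluding the presence of leaves at depths below $\tfrac12\log_2 n$ in the Yule tree at time $t_n\sim 2\log n$. The naive union bound $\bP(\exists\,x\in L(\cT_{t_n}):|x|\le k)\le 2^k\bP(\tau_k\le t_n)$ is already too crude at this scale; what rescues the argument is the $4^{-|x|}$ weighting inherent in $S_n^\varepsilon$ together with the strict inequality $1/(2\log 2)<1$, which lets the Poisson large-deviation rate beat the $2^k$ entropy of the binary tree.
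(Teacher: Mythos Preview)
Your proposal is correct, and you rightly flag the typo in the statement: the second coordinate of the limit is $-\tfrac12 Y_r^{\,2}$, not $-\tfrac12 Y_r$. Your reduction via Taylor expansion to the distributional convergence of $\hat q_n=\sqrt n\,\bar q^{\xi,\cT_{t_n}}$ is exactly the paper's first move.

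Where you diverge is in the proof of $\hat q_n\stackrel{(d)}{\to}Y_r$. The paper computes the conditional characteristic function $\bbE[e^{ib\hat q_n}\mid\cT_{t_n}]=\prod_{x\in L(\cT_{t_n})}\cos(b\sqrt n\,2^{-|x|})$ explicitly, Taylor-expands $\log\cos$, and thus needs only $\max_{x\in L(\cT_{t_n})}\sqrt n\,2^{-|x|}\to 0$ in $\bP$-probability; this is obtained by observing that $\bE\big[\sum_{x\in L(\cT_t)}\lambda^{-|x|}\big]=e^{-t(1-2/\lambda)}$ for any $\lambda\ge 2$ and applying Markov with a well-chosen $\lambda$. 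You instead run Lindeberg--Feller and bound the Lindeberg sum $S_n^\varepsilon$ by a direct first-moment computation, using $\bE\#\{x\in L(\cT_t):|x|=j\}=e^{-t}(2t)^j/j!$ together with a Poisson lower-tail Chernoff bound. Both arguments rest on the same numerical inequality $1/(2\log 2)<1$; the paper's martingale/Markov route is a shade more compact, while your Poisson computation is more explicit and exposes the rate. One small wrinkle in your write-up: you announce the conditional CLT ``for $\bP$-a.e.\ realization'' but then establish the Lindeberg condition only in $\bP$-probability. Since you uncondition via bounded convergence on characteristic functions anyway, convergence in probability of the conditional law suffices and the argument goes through; just adjust the phrasing.
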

	\begin{proof}[Proof of Lemma \ref{lem:alphabeta_cont}]
	Using Taylor's expansion in the expressions \eqref{degalfbet_cont}, and the fact that $\sqrt{n}e^{-t_n/4}=\sqrt{r}$ the result boils down to proving 
	\begin{equation}\label{toy1}
	\lim_{t\to \infty}e^{t/4} \bar q^{\xi,\cT_{t}}\stackrel{(d)}{=} Y_1.
	\end{equation}
	Recall that
	$ \bar q^{\xi,\cT_{t}} = \sum_{x\in L(\cT_t)} 2^{-|x|}\xi_1(x).$
	In order to prove \eqref{toy1} we compute the corresponding Fourier transform. Averaging first w.r.t.\ to the IID variables $\xi(x)$ we obtain that for all $b\in\R$
	$$ \bbE\left[ e^{i b e^{t/4} \bar q^{\xi,\cT_{t}}} \right] = \exp\Big(\textstyle{\sum_{x\in L(\cT_{t})}} \log \cos \(b e^{t/4} 2^{-|x|}\)\Big)\;.$$
	Now we claim that we have the following convergence in probability
	\begin{equation}\label{themax}
	 \lim_{t\to\infty}\max_{x\in L(\cT_t)} 2^{-|x|}e^{t/4}=0. 
	\end{equation}
	This implies via the use of Taylor expansion the following convergence in probability
	\begin{equation}
	\lim_{t\to \infty} \sum_{x\in L(\cT_{t})} \log \cos (b e^{t/4} 2^{-|x|})= -\lim_{t\to \infty}\sum_{x\in L(\cT_{t})} \frac12 b^2 e^{t/2} 4^{-|x|} = -\frac12 b^2 W_{\infty}.
\end{equation}
The Dominated Convergence Theorem  then yields that for all $b\in\R$
	$$ \lim_{t\to\infty} \bE\otimes\bbE\left[ e^{i b e^{t/4} q^{\xi,\cT_{t}}} \right] = \bE\left[\exp\(-\tfrac12 b^2 W_\infty\)\right]\;.$$
	To conclude the proof we just need to justify \eqref{themax}.
Repeating the proof of Proposition \ref{MartinW}, if $X$ is a Poisson r.v.~of parameter $t$ then for any given $\lambda \ge 2$ one has
$$\bE\left[ \sum_{x\in L(\cT_t)} \lambda^{-|x|} e^{t(1-\frac{2}{\lambda})}\right]=\bE\left[ \sum_{x\in L(\cT_t)}2^{-|x|} (\lambda/2)^{-|x|} e^{t(1-\frac{2}{\lambda})}\right] = \E[(\lambda/2)^{-X} e^{t(1-\frac{2}{\lambda})}] = 1.$$
Thus, by Markov's inequality we have
\begin{equation}
	\bP\left[ \exists x\in L(\cT_t), |x|\le m\right]\le \lambda^m e^{-t(1-\frac{2}{\lambda})}. 
\end{equation}
This implies \eqref{themax} provided
$$ \frac14\frac{\log \lambda}{\log 2} < 1 - \frac2{\lambda}\;,$$
which holds true with $\lambda = 2^{5/2}$ for instance.
\end{proof}
\noindent Given  $a\in \bbR$, we set  
	$G_n(a) :=  \bE\otimes\bbE\left[e^{\alpha_n^{\xi,\cT_{t_n}} a+ \beta_n^{\xi,\cT_{t_n}}}\right]$
	so that
	\begin{equation}\label{distrybe1_cont}
		\rho_{t_n}(\si)=G_n(\bar \si_n).
	\end{equation}
	The next lemma follows from the same arguments as in the proof of Lemma \ref{unifconv}.
	\begin{lemma}\label{lem:gG_cont}
	For all $a\in\R$,
	\begin{equation}
		\lim_{n\to \infty} G_n(a) = G(a)\;,
	\end{equation}
	and the convergence is uniform on the interval $[-A,A]$ for any $A>0$.
	\end{lemma}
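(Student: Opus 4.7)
The plan is to mirror the structure of the proof of Lemma \ref{unifconv}, with convergence in distribution now provided by Lemma \ref{lem:alphabeta_cont} (which brings the extra random factor $W_\infty$ into the limit), and with a uniform-in-$\bar q$ boundedness argument to pass from convergence in distribution to convergence of the expectation $G_n(a)$. Concretely, writing
\[
g_n(a,\bar q):= \Big(\tfrac{1+\bar q}{1-\bar q}\Big)^{\sqrt{n}a/2}(1-\bar q^2)^{n/2}=e^{\alpha_n(\bar q)\,a+\beta_n(\bar q)}
\]
for $\bar q\in(-1,1)$, an elementary calculation (the log-derivative in $\bar q$ vanishes at $\bar q=a/\sqrt n$) shows that the discrete-case bound \eqref{eq:maxb} carries over verbatim: for every $A>0$,
\[
\sup_{|a|\le A}\sup_{n\ge 1}\sup_{\bar q\in(-1,1)} g_n(a,\bar q)<\infty.
\]
Thus, for $|a|\le A$ and $n$ large enough, $g_n(a,\bar q^{\xi,\cT_{t_n}})$ is bounded by a constant $M(A)$ independent of $\xi$, $\cT_{t_n}$ and $n$.

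First I would establish pointwise convergence. Lemma \ref{lem:alphabeta_cont} gives the convergence in distribution $(\alpha_n^{\xi,\cT_{t_n}},\beta_n^{\xi,\cT_{t_n}})\Rightarrow(Y_r,-\tfrac12 Y_r^2)$ under $\bbP\otimes\bP$. Since $(\alpha,\beta)\mapsto e^{a\alpha+\beta}$ is continuous and, by the previous step, $g_n(a,\cdot)$ coincides on the range of $\bar q^{\xi,\cT_{t_n}}$ with a bounded continuous truncation, the bounded convergence theorem yields
\[
\lim_{n\to\infty} G_n(a)=\bE\!\left[e^{aY_r-\frac12 Y_r^2}\right].
\]
Conditioning on $W_\infty$ and using $Y_r\,|\,W_\infty\sim\cN(0,rW_\infty)$, a direct Gaussian computation gives
\[
\bE\!\left[e^{aY_r-\frac12 Y_r^2}\,\Big|\,W_\infty\right]
=\frac{1}{\sqrt{1+rW_\infty}}\exp\!\left(\frac{a^2\,rW_\infty}{2(1+rW_\infty)}\right)=\gamma_{rW_\infty}(a),
\]
so taking expectation in $W_\infty$ identifies the limit with $G(a)=\bE[\gamma_{rW_\infty}(a)]$.

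Finally, to upgrade the pointwise convergence to uniform convergence on $[-A,A]$, I would use an equicontinuity argument exactly as in the discrete case. Differentiating under the expectation gives $\partial_a G_n(a)=\bE\otimes\bbE[\alpha_n^{\xi,\cT_{t_n}} g_n(a,\bar q^{\xi,\cT_{t_n}})]$, and the elementary bound $|\alpha| e^{\alpha a}\le e^{\alpha(a+1)}+e^{\alpha(a-1)}$ yields
\[
|\partial_a G_n(a)|\le G_n(a+1)+G_n(a-1),
\]
which is uniformly bounded on $[-A,A]$ in view of the first step. Hence $(G_n)$ is equicontinuous and uniformly bounded on $[-A,A]$, so Arzelà--Ascoli together with the already established pointwise convergence promotes the convergence to a uniform one.

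The main (minor) obstacle, in contrast with the discrete setting of Lemma \ref{unifconv}, is the interplay between the randomness of the tree $\cT_{t_n}$ and the environment $\xi$: one must make sure that the deterministic pointwise bound on $g_n(a,\bar q)$ is strong enough to legitimize the bounded-convergence step, since the limiting law now depends on the non-trivial random variable $W_\infty$. Once the uniform bound in $\bar q$ is in hand, the remaining steps proceed as in the discrete setting.
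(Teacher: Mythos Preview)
Your proposal is correct and follows essentially the same approach as the paper, which simply states that the lemma ``follows from the same arguments as in the proof of Lemma \ref{unifconv}.'' You have faithfully reproduced those arguments with the necessary adaptations: replacing the convergence \eqref{distrybe} by Lemma \ref{lem:alphabeta_cont}, and identifying the limiting expectation $\bE[e^{aY_r-\frac12 Y_r^2}]$ with $G(a)=\bE[\gamma_{rW_\infty}(a)]$ via the Gaussian integral conditional on $W_\infty$. The only cosmetic slip is the displayed supremum over $n\ge 1$, which (as you yourself note immediately afterwards) should really be over $n$ large enough, since for $n\le A^2$ the critical point $a/\sqrt n$ may fall outside $(-1,1)$; this is harmless for the argument and the paper's discrete proof glosses over the same point.
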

\noindent
	We can now conclude the proof of \eqref{need2_cont}. We show that for any given $A>0$,
	\begin{equation}\label{Eq:monotoprove1_cont}
		\limsup_{n\to\infty} \int  \Big\vert F(\rho_{t_n}(\si)) - F\circ G(\bar \si_n) \Big\vert \mathbf{1}_{\{\vert \bar \si_n\vert \le A\}}\pi(\dd \si) = 0\;,
	\end{equation}
	and that
	\begin{equation}\label{Eq:monotoprove2_cont}
		\lim_{A\to\infty} \limsup_{n\to\infty} \int  \Big\vert F(\rho_{t_n}(\si)) -  F\circ G(\bar \si_n) \Big\vert \mathbf{1}_{\{\vert \bar \si_n\vert > A\}}\pi(\dd \si) = 0\;.
	\end{equation}
	The latter follows as in \eqref{Eq:monotoprove2}. 
	To prove \eqref{Eq:monotoprove1_cont}, since $F$ is Lipschitz, it suffices to show that for any given $A>0$
	\begin{equation}\label{Eq:monotoprove1bis_cont}
	\limsup_{n\to\infty} \int  \Big\vert \rho_{t_n}(\si) - G(\bar \si_n) \Big\vert \mathbf{1}_{\{\vert \bar \si_n\vert \le A\}}\pi(\dd \si) = 0\;.
	\end{equation}	
	Since $\rho_{t_n}(\si) = G_n(\bar \si_n)$, Lemma \ref{lem:gG_cont} allows us to conclude.
\end{proof}

\begin{remark}
	If we set for all $a\in\R$,
	$ G_n(a,\cT_{t}) :=  \bbE\left[e^{\alpha_n^{\xi,\cT_{t}} a+ \beta_n^{\xi,\cT_{t}}} \right]$
	and we define the random probability measure on $\O$
	$ \nu_{n}(\si) := G_n(\bar \si_n,\cT_{t_n}) \pi(\si)\;,$
	then almost surely
	$$ 			\lim_{n\to \infty} \| \nu_{n} - \pi\|_\tv = \varphi\big(rW_\infty\big)\;.$$
	Note that $ \mu_{t_n} = \mathbf{E}[\nu_{n}]\;,$
	so that
	$  \| \mu_{t_n} - \pi\|_\tv \le \mathbf{E}\big[ \| \nu_{n} - \pi\|_\tv \big]\;.$
	Passing to the limit we thus get
	$$\frac12 \int \frac{e^{-\frac{z^2}{2}}}{\sqrt{2\pi}}\left|\mathbf{E}\left[\gamma_{rW_\infty}(z)\right]-1\right| \dd z \le \mathbf{E}\left[\varphi\big(rW_\infty\big)\right]\;.$$
	When $r=e^{-\lambda/2}$, the left hand side above is precisely the function $f(\lambda)$ in \eqref{eq:nulambda}.
%
\end{remark}

\subsection{Lower bound for continuous time}

\begin{proposition}\label{lbcont}
  There exists $c>0$ and $n_0$ such that for every $n\ge n_0$ and $t\ge e^{-\frac{n}{10^4}}$, setting $r= n e^{-t/2}$, one has
 \begin{equation}
  d_n(t)\ge 1- 2e^{-c\sqrt{r(1\vee \log (1/t))}}\,.
 \end{equation}
\end{proposition}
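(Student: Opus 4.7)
The plan is to mimic the block construction of Proposition~\ref{prop:lowerbd}, with the block size calibrated to the lower tail of the martingale $W_t$ supplied by Lemma~\ref{important}. Write $L := 1 \vee \log(1/t)$ and assume that $r$ and $\sqrt{rL}$ are sufficiently large and $L \le r$ (otherwise the bound is trivial after shrinking $c$; the restriction $t \ge e^{-n/10^4}$ guarantees $L \le n/10^4$, placing us in the nondegenerate regime). Fix a small $c_0 \in (0,1)$ and a large constant $C_1$ to be tuned, and set
\[
\epsilon := c_0 \sqrt{L/r} \in (0,1), \qquad p := \lceil C_1\, e^{t/2}/\epsilon \rceil, \qquad \alpha := \lfloor n/p \rfloor \asymp r\epsilon \asymp \sqrt{rL}.
\]
Partition $[n]$ into $\alpha$ blocks $B_1, \ldots, B_\alpha$ of size $p$, plus a possibly smaller remainder, and take as initial distribution $\mu := \bigotimes_{i=1}^\alpha \nu_p \otimes \nu_{n-\alpha p}$, where $\nu_k := \tfrac12 \ind_+ + \tfrac12 \ind_-$ is the monochromatic distribution on $\{-1,1\}^k$.

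Because $\mu$ is a product over disjoint blocks and the collision product preserves this factorization at the level of marginals (same argument as in Proposition~\ref{prop:lowerbd}), under $\mu_t$ the blocks are independent and each one carries an IID copy of the continuous-time monochromatic dynamics on $p$ coordinates, driven by its own branching tree $\cT_t^{(i)}$ with martingale $W_t^{(i)}$. Set $\Xi_i := \bigl(\sum_{j \in B_i} \sigma_j\bigr)^2$. Mimicking the computation leading to \eqref{Eq:qxi_cont2}, and using the identity $E[\sigma_j \sigma_{j'} \mid \cT_t^{(i)}] = P_{\cT_t^{(i)}}(U_j = U_{j'}) = e^{-t/2} W_t^{(i)}$ for $j \ne j'$ in the same monochromatic block, one finds
\[
E\bigl[\Xi_i \,\bigm|\, \cT_t^{(i)}\bigr] = p + p(p-1)\, e^{-t/2}\, W_t^{(i)}.
\]
A four-point analogue of the discrete expansion \eqref{secmondiscrete}, combined with the deterministic pointwise bound $\sum_{x \in L(\cT_t^{(i)})} 16^{-|x|} \le \bigl(\sum_x 4^{-|x|}\bigr)^2 = (e^{-t/2} W_t^{(i)})^2$ used to absorb the four-leaf coincidence correction, yields
\[
E\bigl[\Xi_i^2 \,\bigm|\, \cT_t^{(i)}\bigr] \le 3\, p^4 (e^{-t/2} W_t^{(i)})^2 + 6\, p^3\, e^{-t/2} W_t^{(i)} + 3\, p^2 \le 3\, p^2 (1 + \lambda^{(i)})^2,
\]
where $\lambda^{(i)} := p\, e^{-t/2} W_t^{(i)}$. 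On the event $\{W_t^{(i)} \ge \epsilon\}$ one has $\lambda^{(i)} \ge C_1$, the first-moment bound $p + (p-1)\lambda^{(i)} \ge p(1 + \lambda^{(i)})/2$ (valid for $p \ge 2$) holds, and therefore the moment ratio above is uniformly bounded by $12$. Applying Paley--Zygmund conditionally on $W_t^{(i)}$ then yields
\[
\bP\bigl(\Xi_i \ge \tfrac{C_1 p}{3} \,\bigm|\, W_t^{(i)} = w\bigr) \ge \tfrac{1}{48} \qquad \text{for every } w \ge \epsilon.
\]

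Take $C_1$ large enough that $\pi(\Xi_i \ge C_1 p/3) \le 3/C_1 \le 1/200$ by Markov's inequality (e.g.\ $C_1 \ge 600$). Integrating the previous conditional bound against the law of $W_t^{(i)}$ on $\{w \ge \epsilon\}$, and using Lemma~\ref{important} in the form $\bP(W_t^{(i)} \ge \epsilon) \ge 1 - 2 e^{-cL/\epsilon} = 1 - 2 e^{-c\sqrt{rL}/c_0} \ge 1/2$ under our standing assumptions, we obtain $\mu_t(\Xi_i \ge C_1 p/3) \ge 1/96$. Since the blocks are IID under both $\pi$ and $\mu_t$, the indicators $X_i := \ind_{\{\Xi_i \ge C_1 p/3\}}$ are sums of IID Bernoullis, with parameters respectively $\le 1/200$ under $\pi$ and $\ge 1/96$ under $\mu_t$. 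Hoeffding's inequality applied to the event $A := \bigl\{\sum_i X_i \ge \alpha (1/200 + 1/96)/2\bigr\}$ yields $\pi(A) \vee \mu_t(A^c) \le e^{-c'' \alpha}$ for a universal $c'' > 0$, and therefore
\[
d_n(t) \ge \|\mu_t - \pi\|_\tv \ge \mu_t(A) - \pi(A) \ge 1 - 2 e^{-c'' \alpha} \ge 1 - 2 e^{-c \sqrt{rL}},
\]
as required.

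The main technical difficulty lies in the fourth-moment estimate. In the discrete setting of Proposition~\ref{prop:lowerbd} the tree is deterministic and the expansion \eqref{secmondiscrete} is fully explicit, whereas in continuous time the four-point correlations involve both $\Lambda_t := \sum_x 4^{-|x|}$ and the genuinely distinct random quantity $\Lambda_t^{(4)} := \sum_x 16^{-|x|}$; a direct averaging strategy would need uniform $L^2$ control of $W_t$, which is \emph{not} available, since Lemma~\ref{prop:UI} only yields $L^{1+\theta}$ for $\theta$ bounded away from $1$. The workaround is to run Paley--Zygmund \emph{conditionally on} $W_t^{(i)}$, which freezes $\Lambda_t^{(i)}$ at the value $e^{-t/2} W_t^{(i)}$, and to dispose of the $\Lambda_t^{(4)}$ contribution via the deterministic pointwise bound $\Lambda_t^{(4)} \le \Lambda_t^2$; the random lower tail of $W_t^{(i)}$ then enters the argument only through $\bP(W_t^{(i)} \ge \epsilon)$, and the calibration $p\, e^{-t/2} \epsilon = C_1$ routes Lemma~\ref{important} directly into the target exponent $\sqrt{rL}$. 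The boundary cases (very small $r$, $\sqrt{rL}$, or $\alpha$) can be absorbed into the constant $c$.
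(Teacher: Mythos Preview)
There is a genuine gap: the claim that ``under $\mu_t$ the blocks are independent and each one carries an IID copy of the continuous-time monochromatic dynamics driven by its own branching tree $\cT_t^{(i)}$'' is false. While the collision product does preserve block factorization---so the discrete-time argument of Proposition~\ref{prop:lowerbd} goes through---the continuous-time evolution does not: $S_t(\mu^{(1)}\otimes\mu^{(2)}) \neq S_t(\mu^{(1)})\otimes S_t(\mu^{(2)})$ in general. From Lemma~\ref{lem:rep_ct2} one has $p(\gamma,\mu^{(1)}\otimes\mu^{(2)}) = p(\gamma,\mu^{(1)})\otimes p(\gamma,\mu^{(2)})$ for every \emph{fixed} tree $\gamma$, hence $\mu_t = \bE[\,p(\cT_t,\mu^{(1)})\otimes p(\cT_t,\mu^{(2)})\,]$; but both factors use the \emph{same} random tree $\cT_t$, and averaging over it destroys the product structure. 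Concretely, with two monochromatic blocks of size $2$ one computes $\mu_t(\sigma_1\sigma_2\sigma_3\sigma_4) = e^{-t}\bE[W_t^2]$ whereas $\mu_t(\sigma_1\sigma_2)\,\mu_t(\sigma_3\sigma_4) = e^{-t}$, and these differ since $W_t$ is nondegenerate for $t>0$. The paper flags this explicitly (``under the measure $\mu_t$ the variables $(\Xi_i)$ are not IID'').

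This breaks your argument at the large-deviation step: even granting $\mu_t(X_i=1)\ge 1/96$ for each $i$ (which you have not shown, since your derivation already assumes a per-block tree), the $X_i$ are correlated through the shared $\cT_t$, so Hoeffding on $\sum_i X_i$ under $\mu_t$ is unavailable. The paper's remedy is to work with the quenched measure $\mu_{\cT_t}:=\bbE[\mu_{\cT_t}^\xi]$ obtained by conditioning on the single tree: under $\mu_{\cT_t}$ the blocks \emph{are} conditionally IID, your Paley--Zygmund bound (with the same pointwise estimate $\sum_x 16^{-|x|}\le(\sum_x 4^{-|x|})^2$) holds on the event $\{W_t \ge (1\vee\log(1/t))/\alpha\}$, and Lemma~\ref{important} then controls $\bP(W_t < (1\vee\log(1/t))/\alpha)$ by $e^{-c\alpha}$. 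Your block size and block count end up matching the paper's calibration $p\asymp e^{t/2}\sqrt{r/L}$, $\alpha\asymp\sqrt{rL}$; it is only the independence claim, and with it the way Lemma~\ref{important} enters, that needs to be replaced by conditioning on a single shared $\cT_t$.
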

For a comment on  the restriction for $t$ we refer the reader to Remark \ref{thevalueoft}. Note also that Lemma \ref{lem:ubounds_cont} and Proposition \ref{lbcont} complete the proof of Theorem \ref{th:continuous} (the case $t\le e^{-\frac{n}{10^4}}$ in the theorem follows by monotonicity of $t\mapsto d_n(t)$).

\begin{proof}
The proof follows the same plan as that of Proposition \ref{prop:lowerbd}.
We split the set of coordinates into groups of size  $p:=\lfloor \sqrt{80 n} e^{t/4} (1\vee \log(1/t))^{-1/2} \rfloor$, with a leftover and let 
$\alpha:=\lfloor n/p\rfloor$ denote the number of blocks thus obtained. 
The result is equivalent to showing that (for a different $c>0$) 
 $d_n(t)\ge 1- 2e^{-c\alpha}$.
 If $\alpha < 1$ this trivially holds provided $c$ is small enough. We thus assume that $\alpha \ge 1$.
In the remainder of the computation we neglect the effect of integer rounding for better readability.
We are going to find a measure $\mu$ and an event $A$ such that
\begin{equation}
 \pi(A)\le e^{-c\alpha} \;\;\text{ and }\;\; \mu_t(A^{c})\le 3e^{-c\alpha}.
\end{equation}
Adjusting the value of the constant $c$ will then conclude the proof.

We let $\Xi_i$ denote the squared magnetization of the $i$-th group.
\begin{equation}
 \Xi_i:= \left(\sum_{j=(i-1)p+1}^{ip} \sigma_j\right)^2.
\end{equation}
We set
\begin{equation}
X_i:=\ind_{\{\Xi_i\ge 20p\}}, \quad  Z:=\sum_{i=1}^{\alpha}  X_i \quad \text{ and }
 A:=\left\{ Z \ge \frac{\alpha}{15} \right\}.
\end{equation}
Finally we let the initial condition be monochromatic in each block and independent between blocks, that is  $\mu= (\nu_p)^{\otimes \alpha}\otimes \nu_{n-p\alpha}$. As shown in the proof of Proposition \ref{prop:lowerbd} we have
$\pi(A)\le e^{-c\alpha}.$

Now, under the measure $\mu_t$ the variables $(\Xi_i)^{\alpha}_{i=1}$ are not IID, for this reason, recalling \eqref{eq:rep_ct21}, we rather consider $\mu_{\cT_t}$ defined by 
$ \mu_{\cT_t}:= \bbE\left[ \mu^{\xi}_{\cT_t}\right]$.
It follows that (neglecting the effect of integer rounding)
\begin{equation}
 \mu_t(A^{c})= \bE\left[ \mu_{\cT_t}(A^c) \right]
 \le \bP\left[ W_t < \frac{(1 \vee \log (1/t))}{\alpha}  \right]+ \bE\left[  \mu_{\cT_t}(A^c)\ind_{\{W_t\ge\frac{(1 \vee \log (1/t))}{\alpha}\}} \right]\,.
\end{equation}
Due to our assumptions $\log (1/t)\le n/10^4$ and $\alpha \ge 1$, one has  $\frac{(1 \vee \log (1/t))}{\alpha}< 1$. By Lemma \ref{important} there exists $c>0$ such that
\begin{equation}
\bP\left[ W_t < \frac{(1 \vee \log (1/t))}{\alpha}\right]\le 
2 
e^{-c\alpha }.
\end{equation}
It remains thus to estimate the second term.
We compute the first two moments of $\Xi_1$ under $\mu_{\cT_t}$.
We have
\begin{equation}
  \mu_{\cT_t}\left(\Xi_1\right) = p(p-1)\sum_{x\in L(\cT_t)} 4^{-|x|}+p= p(p-1)e^{-t/2} W_t+p=(p-1)\frac{r}{\alpha} W_t+p.
\end{equation}
Hence on the event $\{W_t\ge\frac{(1 \vee \log (1/t))}{\alpha}\}$ we have 
(note that by definition $p\ge 2$)
\begin{equation}\label{40p}
  \mu_{\cT_t}\left(\Xi_1\right)\ge \frac{(p-1) r (1\vee \log(1/t))}{\alpha^2}= 80(p-1)\ge 40 p.
\end{equation}
Next, we compute the second moment. Recalling \eqref{secmondiscrete}, 
\begin{multline}
    \mu_{\cT_t}\left(  \Xi^2_1 \right)=
    p(p-1)(p-2)(p-3)     \mu_{\cT_t}\left(  \sigma_1\sigma_2 \sigma_3 \sigma_4\right)\\ + 6p (p-1) (p-2) \mu_{\cT_t}\left(  \sigma^2_1\sigma_2 \sigma_3 \right)+  4p(p-1)  \mu_{\cT_t}\left(  \sigma^3_1\sigma_2\right)\\
    + 3p(p-1) \mu_{\cT_t}\left(  \sigma^2_1\sigma^2_2\right)
    +p  \mu_{\cT_t}\left(  \sigma^4_1\right).
    \end{multline}
    and then
    \begin{multline}
     \mu_{\cT_t}\left(  \Xi^2_1 \right)=
p(p-1)(p-2)(p-3)\left(3\sumtwo{x,y\in L(\cT_t)}{x\ne y} 4^{-|x|}4^{-|y|}+ \sum_{x\in L(\cT_t)} 16^{-|x|}\right)\\ +\left[6p (p-1)(p-2)+ 4p(p-1)\right]  \sum_{x\in L(\cT_t)} 4^{-|x|} +[3(p-1)p+p].
\end{multline}
Therefore,
\begin{equation}
 \left(3\sumtwo{x,y\in L(\cT_t)}{x\ne y} 4^{-|x|}4^{-|y|}+ \sum_{x\in L(\cT_t)} 16^{-|x|}\right)\le3\sum_{x,y\in L_t(\cT)} 4^{-|x|-|y|}= 3  e^{-t}W^2_t,
\end{equation}
and hence,
\begin{equation}\begin{split}
   \mu_{\cT_t}\left(  \Xi^2_1 \right)&\le    3p(p-1)(p-2)(p-3) e^{-t}W^2_t\\
   &\quad + \left[6p (p-1)(p-2)+ 4p(p-1)\right] e^{-t/2}W_t+3p^2\\
   &\le  3p^2 (p-1)^2  e^{-t}W^2_t+6p^2(p-1) e^{-t/2}W_t+3p^2\\
   &= 3\left[(p-1)\frac{r}{\alpha}W_t+p\right]^2.
\end{split} \end{equation}
Thus, by Paley-Zygmund's inequality, on the event  $\{W_t\ge\frac{(1 \vee \log (1/t))}{\alpha}\}$, using  \eqref{40p},
\begin{equation}
 \mu_{\cT_t}\left( \Xi_1 \ge  20p \right)\ge \mu_{\cT_t}\left( \Xi_1 \ge  \frac{1}{2}\mu_{\cT_t}(\Xi_1) \right)
\ge \frac{\mu_{\cT_t}(\Xi_1)^2 }{4  \mu_{\cT_t}\left(  \Xi^2_1 \right)}=\frac{1}{12}.
\end{equation}
The variables $(X_i)_{i=1}^{\alpha}$ thus dominate IID Bernoulli with parameter $1/12$, therefore by a standard large deviation computation there exists a universal constant $c>0$ such that
\begin{equation}
\mu_{\cT_t}\left( A^c \right)\ind_{\{W_t\ge \frac{(1 \vee \log (1/t))}{\alpha}\}}\le e^{-c\alpha}.
\end{equation}
We have shown that $\mu_t( A^c )\le 3e^{-c\alpha}$, which concludes the proof. 
\end{proof}

\begin{acks}[Acknowledgments]
P.C.~and C.L.~thank IMPA for hospitality and financial support during their visit in 2022 where this work was initiated.
\end{acks}

\begin{funding}
H.L.\ acknowledges the support of a productivity grant from CNPq and of a CNE grant from FAPERj. C.L.~was partially funded by the ANR project Smooth ANR-22-CE40-0017. \end{funding}

\appendix

\section{Gaussian computations}\label{app:Gauss}

We compute the asymptotics of $\varphi(s) =\|\cN(0,1+s)- \cN(0,1) \|_{\rm TV}$ as $s\downarrow 0$ or $s\uparrow +\infty$. Let $z_s$ be the unique positive real at which the densities of $\cN(0,1)$ and $\cN(0,1+s)$ meet.

In the regime $s\downarrow 0$, $z_s = 1 + o(1)$ and we obtain
\begin{align*}
	\varphi(s) &= \bbP(\cN(0,1+s) \notin [-z_s,z_s]) - \bbP(\cN(0,1) \notin [-z_s,z_s])\\
	&= 2\,\bbP(\cN(0,1+s) > z_s) - 2\bbP(\cN(0,1) > z_s)\\
	&= 2\, \bbP\Big( z_s/\sqrt{1+s} < \cN(0,1) < z_s\Big)\\
	&=2\, \int_{z_s/\sqrt{1+s}}^{z_s} \frac{e^{-\frac{x^2}{2}}}{\sqrt{2\pi}} dx\;.
\end{align*}
A simple computation then shows that
$$ \varphi(s) = \frac{s}{\sqrt{2e\pi}}(1+o(1))\;.$$

In the regime $s\uparrow+\infty$, we have $z_s = \sqrt{\log s} (1+o(1))$ (as $s\uparrow +\infty$). We thus get
\begin{align*}
	\varphi(s) &= \bbP(-z_s \le \cN(0,1) \le z_s) - \bbP(-z_s \le \cN(0,1+s) \le z_s)\\
	&= 1-\bbP(\vert\cN(0,1)\vert > z_s) - \bbP(\vert \cN(0,1)\vert \le \frac{z_s}{\sqrt{1+s}})\;.
\end{align*}
We then compute
$$ \bbP(\vert\cN(0,1)\vert > z_s) \le \frac2{\sqrt{2\pi s \log s}}\;,$$
while
$$ \bbP(\vert \cN(0,1)\vert \le \frac{z_s}{\sqrt{1+s}}) = \frac{2}{\sqrt{2\pi}} \frac{z_s}{\sqrt{1+s}} (1+o(1)) = \frac{2\sqrt{\log s }}{\sqrt{2\pi s}} (1+o(1))\;,$$
so that
$$ 1 - \varphi(s) =  \frac{2\sqrt{\log s }}{\sqrt{2\pi s}} (1+o(1))\;,\quad s\uparrow +\infty\;.$$

\section{A tricky inequality}\label{app:tricky}
The next lemma  proves the inequality \eqref{eq:finerineq}.
\begin{lemma}
	Let $\pi$ be a probability measure on some measurable space $(\O,\cA)$. Let $f:\gO\to \bbR_+$ be a density w.r.t.~$\pi$. Then
	\begin{equation}\label{eq:fineq}
		\tfrac12 \,  \|f - 1\|_{L^1(\pi)} \le \phi \left(\|f - 1\|_{L^2(\pi)} \right)
	\end{equation}
	where 
	$$ \phi(x)=\begin{cases} 
		\frac{x}{2},  &\text{ if } x\le 1,\\
		\frac{x^2}{1+x^2} \quad &\text{ if } x\ge1.
	\end{cases}$$
\end{lemma}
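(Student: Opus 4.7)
The plan is to work with the sets $A = \{f \ge 1\}$ and $B = \{f < 1\}$, which partition $\O$. Let $a = \pi(A)$, $b = \pi(B) = 1-a$, and denote $x = \|f-1\|_{L^2(\pi)}$, $y = \tfrac12 \|f-1\|_{L^1(\pi)}$. Since $\int f \, d\pi = 1$, the standard decomposition of the $L^1$ norm gives
$$
y = \int_A (f-1) \, d\pi = \int_B (1-f) \, d\pi,
$$
and from $0 \le 1-f \le 1$ on $B$ one obtains the key one-sided bound $y \le b$, equivalently $a \le 1-y$. Note that the symmetric bound $y \le a$ fails in general, since $f-1$ is unbounded above on $A$; this asymmetry is precisely what will allow us to go beyond a naive Cauchy--Schwarz estimate.

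Next, I would apply Cauchy--Schwarz separately on $A$ and $B$: one has $y = \int_A (f-1)\cdot 1 \, d\pi \le \sqrt{a}\,\sqrt{\int_A (f-1)^2 d\pi}$ and analogously on $B$. Squaring, summing the two resulting lower bounds on $\int (f-1)^2$, and using $a+b = 1$ give
$$
x^2 \;\ge\; \frac{y^2}{a} + \frac{y^2}{b} \;=\; \frac{y^2}{a(1-a)},
$$
so the density must satisfy $a(1-a) \ge y^2/x^2$ with $a \in [0, 1-y]$. Since $a \mapsto a(1-a)$ is increasing on $[0,1/2]$ and decreasing beyond, its maximum on $[0, 1-y]$ equals $1/4$ if $y \le 1/2$ and $y(1-y)$ if $y > 1/2$.

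This yields two regimes. If $y \le 1/2$, the constraint $y^2/x^2 \le 1/4$ reads $y \le x/2$. If $y > 1/2$, the constraint $y^2/x^2 \le y(1-y)$ rearranges to $y \le x^2/(1+x^2)$. A short case analysis on $x$ versus $1$ then concludes: when $x \le 1$, the second regime would force $y \le x^2/(1+x^2) \le 1/2$, contradicting $y > 1/2$, so the first regime applies and $y \le x/2 = \phi(x)$; when $x \ge 1$, the first regime gives $y \le 1/2 \le x^2/(1+x^2) = \phi(x)$, while the second directly gives $y \le \phi(x)$.

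There is no substantive obstacle here. The only nontrivial ingredient beyond Cauchy--Schwarz is the one-sided estimate $y \le b$, which is exactly what upgrades the trivial bound $y \le x/2$ (sharp for $x \le 1$) to the saturation bound $y \le x^2/(1+x^2)$ when $x$ is large.
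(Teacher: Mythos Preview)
Your proof is correct and follows essentially the same route as the paper: both hinge on the lower bound $x^2 \ge y^2/[a(1-a)]$ together with the one-sided constraint $a \le 1-y$, and then optimize over $a$. The only cosmetic difference is that the paper first reduces to two-valued densities by averaging $f$ over $\{f\ge 1\}$ and $\{f<1\}$ and then computes exactly, whereas you obtain the same inequality directly via Cauchy--Schwarz on each piece; these are equivalent since the averaging step is precisely the equality case of your Cauchy--Schwarz.
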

The reader can check that the proof below implies that the inequality is sharp in the sense that if $\pi$ has no atom,
then for any value $x > 0$, it is possible to find some density function $f$ such that $\|f - 1\|_{L^2(\pi)}=x$ and \eqref{eq:fineq} is an equality.
\begin{proof}
We recall that $\|f - 1\|_\tv= \tfrac12\|f - 1\|_{L^1(\pi)}$. We are going to prove the (equivalent) inverse inequality, that is
\begin{equation}\label{desired}
\|f - 1\|_{L^2(\pi)}\ge \phi^{-1}\left(\|f - 1\|_\tv\right)\;,
\end{equation}
where
	$$ \phi^{-1}(u)=\begin{cases} 
	2u,  &\text{ if } u\le 1/2,\\
	\sqrt{\frac{u}{1-u}} \quad &\text{ if } u\ge1/2.
\end{cases}$$
Assume that the inequality holds for functions that assume only two values, i.e.\ functions of the form
\begin{equation}\label{simpleform}
	f=(1+a)\ind_A+ \left(1-\frac{a\pi(A)}{1-\pi(A)}\right)\ind_{A^{c}}
\end{equation}
where necessarily $a\in [0,\frac{1-\pi(A)}{\pi(A)}]$ and $A\in \cA$. Then for a generic density function $f$, we introduce the density function $\bar f$ by setting
\begin{equation}
	 \bar f = \frac{\int f\ind_{\{f\ge 1\}} \dd \pi}{\int \ind_{\{f\ge 1\}} \dd \pi} \ind_{\{f\ge 1\}}+ \frac{\int f\ind_{\{f< 1\}} \dd \pi}{\int \ind_{\{f\ge 1\}} \dd \pi} \ind_{\{f< 1\}}.
\end{equation}
It is straightforward to check that
$$ \|f - 1\|_\tv= \|\bar f - 1\|_\tv   \quad  \text{ and } \quad  \|f - 1\|_{L^2(\pi)}\ge \|\bar f - 1\|_{L^2(\pi)}\;.$$
Since the density function $\bar f$ only takes two values, it satisfies the inequality of the statement and we deduce that
$$ \|f - 1\|_{L^2(\pi)}\ge \|\bar f - 1\|_{L^2(\pi)} \ge \phi^{-1}\left(\|f - 1\|_\tv\right) = \phi^{-1}\left(\|f - 1\|_\tv\right)\;,$$
as required. We are left with checking the inequality in the case where $f$ is of the form \eqref{simpleform}.\\
In that case we have
\begin{equation}
  \|f - 1\|_\tv=a\pi(A)
\end{equation}
and
\begin{equation}
\|f - 1\|_{L^2(\pi)}= \sqrt{\frac{a^2\pi(A)}{1-\pi(A)}}= \frac{\|f - 1\|_\tv}{\sqrt{\pi(A)(1-\pi(A))}}.
\end{equation}
The above mentioned constraint on the parameter $a$ implies that $\pi(A)\le 1- \|f - 1\|_\tv$. Consequently
\begin{equation}
 \|f - 1\|_{L^2(\pi)}\ge \min_{u\in [0, 1- \|f - 1\|_\tv]}\frac{\|f - 1\|_\tv}{\sqrt{u(1-u)}}.
\end{equation}
The minimum is attained at $u=1/2$ if $\|f - 1\|_\tv\le 1/2$ and at $u=1- \|f - 1\|_\tv$ otherwise. Hence we obtain the desired inequality \eqref{desired}.
\end{proof}

\section{Poissonian computations}\label{app:Poisson}

Let $X$ and $Y$ be two Poisson processes of intensity $1$ and $1/2$ respectively. Fix $t>0$. We aim at proving that for every measurable and bounded map $G$ defined on the Skorohod's space of c\`adl\`ag processes on $[0,t]$, the following identity holds
$$ \bbE\left[e^{t/2}2^{-X_t}G(X)\right]=\bbE\left[G(Y)\right] \;.$$
To that end, it suffices to prove that for any integer $k\ge 1$, for all $0=t_0 \le t_1 < \ldots < t_k = t$ and all $q_1,\ldots,q_k \in \R$
$$ \bbE\left[ e^{t/2}2^{-X_t} \prod_{j=1}^k e^{-q_j X_{t_j}}\right] = \prod_{j=1}^k \exp\left(\frac12(t_j-t_{j-1})(e^{-q_j-\ldots-q_k} - 1)\right)\;.$$

Set $\tilde{q}_j := q_j$ for all $j< k$ and $\tilde{q}_k := q_k + \log 2$. Using the independence and stationarity of the increments of a Poisson process, we find
\begin{align*}
	\bbE\left[ e^{t/2}2^{-X_t} \prod_{j=1}^k e^{-q_j X_{t_j}}\right] &= e^{t/2} \bbE\left[ \prod_{j=1}^k e^{-(\tilde{q}_j+\ldots+\tilde{q}_k) (X_{t_j} - X_{t_{j-1}})}\right]\\
	&=e^{t/2} \prod_{j=1}^k \exp\left((t_j-t_{j-1})(e^{-\tilde{q}_j-\ldots-\tilde{q}_k} - 1)\right)\\
	&=e^{t/2} \prod_{j=1}^k \exp\left((t_j-t_{j-1})(\frac12 e^{-{q}_j-\ldots-{q}_k} - 1)\right)
\end{align*}
Since $e^{t/2} = \prod_{j=1}^k \exp(\frac12 (t_j-t_{j-1}))$, we easily conclude. \qed

\section{Proof of Lemma \ref{important}}\label{sec:important}
We start with the upper bound. The martingale property together with Markov's inequality show that $\bP(W_{\infty}\le 2\gep \ | \ W_t\le \gep )\ge 1/2$ and therefore \[\bP\left( W_t \le \gep\right) \le 2 \bP\left( W_\infty \le 2\gep\right).\] 
We will use the above estimate when $t>1$, in which case we can further assume that  $\gep$ is sufficiently small.
%
For any  $k\in\bbN$, a.s.\
\begin{equation}
W_\infty\ge 4^{-k} \sum_{i=1}^{2^k} W^{(i)}_\infty,
\end{equation}
where $(W^{(i)}_\infty)_{i=1}^{2^k}$ are the limits of the martingales corresponding to the trees rooted at the $2^k$ leaves of generation $k$ in the tree $\bbT$.  Hence
\begin{equation}
\bP\left( W_\infty \le 2^{-(k+1)} \right) \le \bP\left( \sum_{i=1}^{2^k} W^{(i)}_\infty \le 2^{k-1}\right).
\end{equation}
The variables  $(W^{(i)}_\infty)_{i=1}^{2^k}$ are IID with the same distribution as $W_\infty$. Since $\bE[W_\infty]=1$, the above is a large deviation event for a sequence of IID random variables and thus has a probability smaller that $\exp(-c 2^{k})$ for some constant $c>0$. Hence, taking e.g.\ $k$ such that $2^k\gep\in[1/8,1/4]$, the previous estimates are sufficient to prove the desired upper bound in Lemma \ref{important} for all $t>1$, and all $\gep\in(0,1)$. 

When $t\in(0,1]$, then 
$W_t<1$ if and only if there has been a splitting at the root at time $t$. Therefore, 
\begin{equation}\label{largeeps}
 \bP[W_t<1]=\bP[\tau_{\emptyset}<t]=1-e^{-t}.
\end{equation}
For the remainder of the proof we can assume that $\gep\le \gep_0$ sufficiently small since \eqref{largeeps} allows us to deal with $\gep \in [\gep_0,1)$, by tuning the constant in the inequality appropriately.
We notice that since $W_t= e^{t/2}E_{\cT_t}[2^{-|U_1|}]$, by Markov's inequality $W_t\le \gep$ only if
$$P_{\cT_t}\left[ e^{t/2} 2^{-|U_1|}\le 4\gep \right]\ge \frac{3}{4}.$$
The above implies that a portion at least $3/4$ of the vertices at generation ${\bf h}_{\gep,t}:= \lceil -\log_2(4\gep)+\frac{t}{2\log 2}\rceil$ have to be in $\cT_t$. This implies in particular (recall \eqref{defct}) that the exponential clocks corresponding to the parents of those vertices are smaller than $t$, or in other words that
\begin{equation}
 \#\{ x\in \bbT \ : \ |x|={\bf h}_{\gep,t}-1 \text{ and } \cE_x\le t \} \ge 3 \times 2^{{\bf h}_{\gep,t}-3}.
\end{equation}
To conclude we only need to prove that 
\begin{equation}\label{zelast}
\bP\left[ \#\{ x\in \bbT \ : \ |x|={\bf h}_{\gep,t}-1 \text{ and } \cE_x\le t \} \ge 3 \times 2^{{\bf h}_{\gep,t}-3} \right] \le e^{-\frac{c(1\vee \log (1/t))}{\gep}}.
\end{equation}
First observe that by Cram\'er's Theorem (the cardinality to estimate is a sum of Bernoulli variables of parameter $1-e^{-t}$, and note that $3/4> 1-e^{-1}$)
we have 
\begin{equation}
 \bP\left[ \#\{ x\in \bbT \ : \ |x|={\bf h}_{\gep,t}-1 \text{ and } \cE_x\le 1 \} \ge 3 \times 2^{{\bf h}_{\gep,t}-3} \right] \le e^{- c 2^{{\bf h}_{\gep,t}}}\le  e^{-\frac{c'}{\gep}},
\end{equation}
for suitable constants $c,c'>0$. Thus, to conclude we only need to prove that \eqref{zelast} holds for sufficiently small values of $t$.
In that case, we observe that $\#\{ x\in \bbT \ : \ |x|={\bf h}_{\gep,t}-1 \text{ and } \cE_x\le t \}$ is a Binomial r.v.~with parameters $4N$ and $1-e^{-t}$, where $N= 2^{{\bf h}_{\gep,t}-3}$, and we thus use the following rough bound on the binomial distribution: 
\begin{equation}
 \bP\left[ \mathrm{Bin}(1-e^{-t},4N)\ge 3N \right]\le 
 \binom{4N}{3N}(1-e^{-t})^{3N}
 \le e^{3N\left(\log t\right)+CN},
\end{equation}
for a suitable constant $C$.
This ends the proof of the upper bound in Lemma \ref{important}.

%
%
%

For the lower-bound, we start with the case $t\in(0,1]$. We consider the event $A^{(t)}_k$ that for generations numbered from $0$ to $k-1$, the splitting times are smaller than $t2^{|x|-k}$, that is to say, the root splits after a time smaller than $t2^{-k}$, its descendent split after an additional time smaller than $(t2^{1-k}$ etc...).
Using $1-e^{-t}\ge t/2$ for $t\in[0,1]$,
\begin{align*}
 \bP(A^{(t)}_k)=\prod_{i=0}^{k-1} (1-e^{-t2^{i-k}})^{2^{i}}&\ge t^{2^k-1} 2^{-\sum_{i=0}^{k-1}(k+1-i)2^i  }\\
 &\ge t^{2^k-1} 2^{-2^{k+1} \sum_{j\ge 0}j 2^{-j} } \le t^{2^k-1} 2^{-c 2^{k+1}}\;,
\end{align*}
for some constant $c>0$.
On the event $A^{(t)}_k$,  $\cT_{t}$ has a complete $k$-th generation. Therefore, 
\begin{equation}
 W_{t}\le e^{1/2} 2^{-k}.
\end{equation}
Choosing $k=k_{\gep}$ such that $e^{1/2} 2^{-k}\in (\gep/2,\gep]$ we obtain that for $t\le 1$, 
\begin{equation}\label{boundonak}
\bP[W_t\le \gep]\ge \bP(A_{k})\ge \exp\left( - c'(\log (1/t)+1)2^{k}\right) 
\end{equation}
and we can conclude using the fact that $2^{k}$ is of order $\gep^{-1}$. This settles the lower bound for $t\in(0,1]$.

When $t>1$, using the martingale property at time $1$ and Markov's inequality,
\begin{equation}
 \bP\left[ W_t \le \gep   \ | \ A^{(1)}_k\right]\ge 1- \frac{e^{1/2} 2^{-k}}{\gep}.
\end{equation}
Hence for every $k\ge 0$
\begin{equation}
  \bP\left[ W_t \le \gep \right] \ge P(A^{(1)}_k)\left(1- \frac{e^{1/2} 2^{-k}}{\gep}\right).
\end{equation}
and we conclude by taking e.g.\ $k=k'_{\gep}\ge 1$ such that   $e^{1/2} 2^{-{k'_{\gep}}}\in (\frac{9\gep}{20},\frac{9\gep}{10}]$.
This ends the proof of the lower bound in Lemma \ref{important}.
\qed

\bibliographystyle{imsart-number} 
\bibliography{nonlinear}

\begin{thebibliography}{24}

\bibitem{AJDM11}
\begin{barticle}[author]
\bauthor{\bsnm{Andrieu},~\bfnm{Christophe}\binits{C.}},
  \bauthor{\bsnm{Jasra},~\bfnm{Ajay}\binits{A.}},
  \bauthor{\bsnm{Doucet},~\bfnm{Arnaud}\binits{A.}} \AND
  \bauthor{\bsnm{Del~Moral},~\bfnm{Pierre}\binits{P.}}
(\byear{2011}).
\btitle{On nonlinear {M}arkov chain {M}onte {C}arlo}.
\bjournal{Bernoulli}
\bvolume{17}
\bpages{987--1014}.
\bdoi{10.3150/10-BEJ307}
\bmrnumber{2817614}
\end{barticle}
\endbibitem

\bibitem{baake2015general}
\begin{barticle}[author]
\bauthor{\bsnm{Baake},~\bfnm{Ellen}\binits{E.}},
  \bauthor{\bsnm{Baake},~\bfnm{Michael}\binits{M.}} \AND
  \bauthor{\bsnm{Salamat},~\bfnm{Majid}\binits{M.}}
(\byear{2015}).
\btitle{The general recombination equation in continuous time and its
  solution}.
\bjournal{Discrete and Continuous Dynamical Systems}
\bvolume{36}
\bpages{63--95}.
\end{barticle}
\endbibitem

\bibitem{Biggins}
\begin{barticle}[author]
\bauthor{\bsnm{Biggins},~\bfnm{J.~D.}\binits{J.~D.}}
(\byear{1992}).
\btitle{Uniform convergence of martingales in the branching random walk}.
\bjournal{Ann. Probab.}
\bvolume{20}
\bpages{137--151}.
\end{barticle}
\endbibitem

\bibitem{CapPar}
\begin{barticle}[author]
\bauthor{\bsnm{Caputo},~\bfnm{Pietro}\binits{P.}} \AND
  \bauthor{\bsnm{Parisi},~\bfnm{Daniel}\binits{D.}}
(\byear{2022}).
\btitle{Nonlinear recombinations and generalized random transpositions}.
\bjournal{Preprint arXiv:2207.04775}.
\end{barticle}
\endbibitem

\bibitem{CapSin}
\begin{barticle}[author]
\bauthor{\bsnm{Caputo},~\bfnm{Pietro}\binits{P.}} \AND
  \bauthor{\bsnm{Sinclair},~\bfnm{Alistair}\binits{A.}}
(\byear{2018}).
\btitle{Entropy production in nonlinear recombination models}.
\bjournal{Bernoulli}
\bvolume{24}
\bpages{3246--3282}.
\end{barticle}
\endbibitem

\bibitem{CapSin2}
\begin{barticle}[author]
\bauthor{\bsnm{Caputo},~\bfnm{Pietro}\binits{P.}} \AND
  \bauthor{\bsnm{Sinclair},~\bfnm{Alistair}\binits{A.}}
(\byear{2023}).
\btitle{Nonlinear dynamics for the {I}sing model}.
\bjournal{arXiv:2305.18788}.
\end{barticle}
\endbibitem

\bibitem{CCG}
\begin{barticle}[author]
\bauthor{\bsnm{Carlen},~\bfnm{E.~A.}\binits{E.~A.}},
  \bauthor{\bsnm{Carvalho},~\bfnm{M.~C.}\binits{M.~C.}} \AND
  \bauthor{\bsnm{Gabetta},~\bfnm{E.}\binits{E.}}
(\byear{2000}).
\btitle{Central limit theorem for {M}axwellian molecules and truncation of the
  {W}ild expansion}.
\bjournal{Comm. Pure Appl. Math.}
\bvolume{53}
\bpages{370--397}.
\end{barticle}
\endbibitem

\bibitem{chauvin1988kpp}
\begin{barticle}[author]
\bauthor{\bsnm{Chauvin},~\bfnm{Brigitte}\binits{B.}} \AND
  \bauthor{\bsnm{Rouault},~\bfnm{Alain}\binits{A.}}
(\byear{1988}).
\btitle{{KPP} equation and supercritical branching Brownian motion in the
  subcritical speed area. Application to spatial trees}.
\bjournal{Probability theory and related fields}
\bvolume{80}
\bpages{299--314}.
\end{barticle}
\endbibitem

\bibitem{diaconis1996cutoff}
\begin{barticle}[author]
\bauthor{\bsnm{Diaconis},~\bfnm{Persi}\binits{P.}}
(\byear{1996}).
\btitle{The cutoff phenomenon in finite {M}arkov chains.}
\bjournal{Proceedings of the National Academy of Sciences}
\bvolume{93}
\bpages{1659--1664}.
\end{barticle}
\endbibitem

\bibitem{Gei}
\begin{barticle}[author]
\bauthor{\bsnm{Geiringer},~\bfnm{Hilda}\binits{H.}}
(\byear{1944}).
\btitle{On the probability theory of linkage in {M}endelian heredity}.
\bjournal{Ann. Math. Statistics}
\bvolume{15}
\bpages{25--57}.
\end{barticle}
\endbibitem

\bibitem{Hardy}
\begin{barticle}[author]
\bauthor{\bsnm{Hardy},~\bfnm{G.~H.}\binits{G.~H.}}
(\byear{1908}).
\btitle{Mendelian proportions in a mixed population}.
\bjournal{Science}
\bvolume{28}
\bpages{49--50}.
\end{barticle}
\endbibitem

\bibitem{kyprianou2004travelling}
\begin{binproceedings}[author]
\bauthor{\bsnm{Kyprianou},~\bfnm{Andreas~E}\binits{A.~E.}}
(\byear{2004}).
\btitle{Travelling wave solutions to the {KPP} equation: alternatives to
  {S}imon {H}arris' probabilistic analysis}.
In \bbooktitle{Annales de l'Institut Henri Poincare (B) Probability and
  Statistics}
\bvolume{40}
\bpages{53--72}.
\bpublisher{Elsevier}.
\end{binproceedings}
\endbibitem

\bibitem{lacoin2016mixing}
\begin{barticle}[author]
\bauthor{\bsnm{Lacoin},~\bfnm{Hubert}\binits{H.}}
(\byear{2016}).
\btitle{Mixing time and cutoff for the adjacent transposition shuffle and the
  simple exclusion}.
\bjournal{The Annals of Probability}
\bpages{1426--1487}.
\end{barticle}
\endbibitem

\bibitem{levin2017markov}
\begin{bbook}[author]
\bauthor{\bsnm{Levin},~\bfnm{David~A}\binits{D.~A.}} \AND
  \bauthor{\bsnm{Peres},~\bfnm{Yuval}\binits{Y.}}
(\byear{2017}).
\btitle{Markov chains and mixing times}
\bvolume{107}.
\bpublisher{American Mathematical Soc.}
\end{bbook}
\endbibitem

\bibitem{lubetzky2017universality}
\begin{barticle}[author]
\bauthor{\bsnm{Lubetzky},~\bfnm{Eyal}\binits{E.}} \AND
  \bauthor{\bsnm{Sly},~\bfnm{Allan}\binits{A.}}
(\byear{2017}).
\btitle{Universality of cutoff for the {I}sing model}.
\bjournal{The Annals of Probability}
\bvolume{45}
\bpages{3664--3696}.
\end{barticle}
\endbibitem

\bibitem{lyons1995conceptual}
\begin{barticle}[author]
\bauthor{\bsnm{Lyons},~\bfnm{Russell}\binits{R.}},
  \bauthor{\bsnm{Pemantle},~\bfnm{Robin}\binits{R.}} \AND
  \bauthor{\bsnm{Peres},~\bfnm{Yuval}\binits{Y.}}
(\byear{1995}).
\btitle{Conceptual proofs of ${L}\log {L}$ criteria for mean behavior of
  branching processes}.
\bjournal{The Annals of Probability}
\bpages{1125--1138}.
\end{barticle}
\endbibitem

\bibitem{mckean1966speed}
\begin{barticle}[author]
\bauthor{\bsnm{McKean~Jr},~\bfnm{Henry~P}\binits{H.~P.}}
(\byear{1966}).
\btitle{Speed of approach to equilibrium for {K}ac's caricature of a
  {M}axwellian gas}.
\bjournal{Archive for rational mechanics and analysis}
\bvolume{21}
\bpages{343--367}.
\end{barticle}
\endbibitem

\bibitem{mckean1967exponential}
\begin{barticle}[author]
\bauthor{\bsnm{McKean~Jr},~\bfnm{Henry~P.}\binits{H.~P.}}
(\byear{1967}).
\btitle{An exponential formula for solving {B}oltzmann's equation for a
  {M}axwellian gas}.
\bjournal{Journal of Combinatorial Theory}
\bvolume{2}
\bpages{358--382}.
\end{barticle}
\endbibitem

\bibitem{Sinetal2}
\begin{barticle}[author]
\bauthor{\bsnm{Rabani},~\bfnm{Yuval}\binits{Y.}},
  \bauthor{\bsnm{Rabinovich},~\bfnm{Yuri}\binits{Y.}} \AND
  \bauthor{\bsnm{Sinclair},~\bfnm{Alistair}\binits{A.}}
(\byear{1998}).
\btitle{A computational view of population genetics}.
\bjournal{Random Structures \& Algorithms}
\bvolume{12}
\bpages{313--334}.
\end{barticle}
\endbibitem

\bibitem{Sinetal}
\begin{binproceedings}[author]
\bauthor{\bsnm{Rabinovich},~\bfnm{Yuri}\binits{Y.}},
  \bauthor{\bsnm{Sinclair},~\bfnm{Alistair}\binits{A.}} \AND
  \bauthor{\bsnm{Wigderson},~\bfnm{Avi}\binits{A.}}
(\byear{1992}).
\btitle{Quadratic dynamical systems}.
In \bbooktitle{Proceedings of the 33rd Annual IEEE Symposium on Foundations of
  Computer Science (FOCS)}
\bpages{304--313}.
\bpublisher{IEEE}.
\end{binproceedings}
\endbibitem

\bibitem{salez2023cutoff}
\begin{barticle}[author]
\bauthor{\bsnm{Salez},~\bfnm{Justin}\binits{J.}}
(\byear{2023}).
\btitle{Cutoff for non-negatively curved {M}arkov chains}.
\bjournal{Journal of the European Mathematical Society}.
\end{barticle}
\endbibitem

\bibitem{Uchiyama}
\begin{barticle}[author]
\bauthor{\bsnm{Uchiyama},~\bfnm{K\B{o}hei}\binits{K.}}
(\byear{1982}).
\btitle{Spatial growth of a branching process of particles living in {${\bf
  R}^{d}$}}.
\bjournal{Ann. Probab.}
\bvolume{10}
\bpages{896--918}.
\end{barticle}
\endbibitem

\bibitem{Weinberg}
\begin{barticle}[author]
\bauthor{\bsnm{Weinberg},~\bfnm{Wilhelm}\binits{W.}}
(\byear{1908}).
\btitle{\"{U}ber den {N}achweis der {V}ererbung beim {M}enschen}.
\bjournal{{J}arheshefte des {V}ereins f\"ur vaterl\"andische {N}aturkunde in
  {W}\"urttemberg}
\bvolume{64}
\bpages{368--382}.
\end{barticle}
\endbibitem

\bibitem{Wild}
\begin{binproceedings}[author]
\bauthor{\bsnm{Wild},~\bfnm{E}\binits{E.}}
(\byear{1951}).
\btitle{On {B}oltzmann's equation in the kinetic theory of gases}.
In \bbooktitle{Mathematical Proceedings of the cambridge Philosophical society}
\bvolume{47}
\bpages{602--609}.
\bpublisher{Cambridge University Press}.
\end{binproceedings}
\endbibitem

\end{thebibliography}

\end{document}